\documentclass[12pt,english]{amsart}
\usepackage[latin9]{inputenc}
\usepackage{babel}
\usepackage{amstext}
\usepackage{amsthm}
\usepackage{amssymb}
\usepackage[unicode=true,pdfusetitle,
 bookmarks=true,bookmarksnumbered=false,bookmarksopen=false,
 breaklinks=false,pdfborder={0 0 1},backref=false,colorlinks=false]
 {hyperref}

\makeatletter
\numberwithin{equation}{section}
\numberwithin{figure}{section}
\theoremstyle{plain}
\newtheorem{thm}{\protect\theoremname}[section]
\theoremstyle{plain}
\newtheorem{cor}[thm]{\protect\corollaryname}
\theoremstyle{plain}
\newtheorem{lem}[thm]{\protect\lemmaname}
\theoremstyle{plain}
\newtheorem{prop}[thm]{\protect\propositionname}
\theoremstyle{remark}
\newtheorem{rem}[thm]{\protect\remarkname}

\usepackage{geometry}
\geometry{margin=1.2in}

\makeatother

\providecommand{\corollaryname}{Corollary}
\providecommand{\lemmaname}{Lemma}
\providecommand{\propositionname}{Proposition}
\providecommand{\remarkname}{Remark}
\providecommand{\theoremname}{Theorem}

\begin{document}
\title{Densities of eigenspaces of frobenius and Distributions of R-Modules}
\author{Jack Klys, Jacob Tsimerman}
\begin{abstract}
For any polynomial $p\left(x\right)$ over $\mathbb{F}_{l}$ we determine
the asymptotic density of hyperelliptic curves over $\mathbb{F}_{q}$
of genus $g$ for which $p\left(x\right)$ divides the characteristic
polynomial of Frobenius acting on the $l$-torsion of the Jacobian,
and give an explicit formula for this density. We prove this result
as a consequence of more general density theorems for quotients of
Tate modules of such curves, viewed as modules over the Frobenius.
The proof involves the study of measures on $R$-modules over arbitrary
rings $R$ which are finite $\mathbb{Z}_{l}$-algebras. In particular
we prove a result on the convergence of sequences of such measures,
which can be applied to the moments computed in \cite{lipnowskitsimerman}
to obtain the above results. We also extend the random model for finite
$R$-modules proposed in \cite{lipnowskitsimerman} to such rings
$R$, and prove several of its properties. Notably the measure obtained
is in general not inversely proportional to the size of the automorphism
group.

\end{abstract}

\maketitle

\section{Introduction}

\subsection{Background}

Cohen, Lenstra and Martinet \cite{cohenlenstra,cohenmartinet} put
forth heuristics for predicting the distribution of the $l$-part
of the class group of certain families of number fields. Their proposed
distribution on the set of finite abelian $l$-groups in the most
simple case is 
\[
\mu_{CL}\left(A\right)=\frac{c}{\left|\mathrm{Aut}\left(A\right)\right|}
\]
 for some normalizing constant $c$. This distribution can be modelled
by cokernels of Haar-random $n\times n$ matrices over $\mathbb{Z}_{l}$,
in the limit $n\longrightarrow\infty$ \cite{washingtonRandomMatrix}.
We will assume $l$ is an odd prime throughout the paper, and $q$
is a power of a prime distinct from $l$.

These heuristics have been extended to the setting of Jacobians of
curves over finite fields. Let $\mathcal{M}_{g}\left(\mathbb{F}_{q}\right)$
be the space of smooth projective curves over $\mathbb{F}_{q}$. In
their simplest form the heuristics say that the distribution of the
finite abelian group $\mathrm{Jac}\left(C\right)\left(\mathbb{F}_{q}\right)\left[l^{\infty}\right]$
in the $g$-limit is the Cohen-Lenstra distribution $\mu_{CL}$. As
a consequence of ground-breaking results on homological stabilization
Ellenberg, Venkatesh, and Westerland \cite{EVW} were able to prove
this heuristic holds in the $g,q$-limit.

Lipnowski and the second author \cite{lipnowskitsimerman} further
extended the above heuristics, essentially by considering the whole
group $\mathrm{Jac}\left(C\right)\left[l^{\infty}\right]$ as a module
over the Frobenius operator $F$. Though there is no distribution
on $\mathrm{Jac}\left(C\right)\left[l^{\infty}\right]$, this is remedied
by considering the distribution of $\ker P\left(F\right)$ for any
suitable polynomial $P\left(X\right)\in\mathbb{Z}_{l}\left[X\right]$.
Indeed $\mathrm{Jac}\left(C\right)\left(\mathbb{F}_{q}\right)\left[l^{\infty}\right]=\ker\left(1-F\right)$
so this generalizes the previous setting.

Furthermore they developed and studied a similar random model for
finite modules over rings $R$ which are finite local $\mathbb{Z}_{l}$-algebras
containing $\mathbb{Z}_{l}$ by considering the distribution of cokernels
of large Haar-random matrices over $R$. This gives rise to a measure
on finite $R$-modules denoted $\mu_{R}$. They conjectured when $R=\mathbb{Z}_{l}\left[F\right]/\left\langle P\left(F\right)\right\rangle $
this models the distribution of $\mathrm{Jac}\left(C\right)\left[l^{\infty}\right]\left[P\left(F\right)\right]$.
Extending the methods of Ellenberg-Venkatesh-Westerland \cite{EVW}
they proved that as $C$ varies over $\mathcal{M}_{g}\left(\mathbb{F}_{q}\right)$
the moments converge to 1 in the $g,q$ limit, 
\[
\mathbb{E}_{\mathcal{M}_{g}\left(\mathbb{F}_{q}\right)}\left[\mathrm{Surj}_{\mathbb{Z}_{l}\left[F\right]}\left(\mathrm{Jac}\left(C\right),G\right)\right]\longrightarrow1
\]
for all finite $\mathbb{Z}_{l}\left[F\right]$-modules $G$. 

In this paper we determine the density of hyperelliptic curves in
$\mathcal{M}_{g}\left(\mathbb{F}_{q}\right)$ for which any polynomial
$P\left(X\right)\in\mathbb{F}_{l}\left[X\right]$ divides the characteristic
polynomial of Frobenius acting on $\mathrm{Jac}\left(C\right)\left[l\right]$
in the $g,q$-limit, and we give an explicit formula for this density.
We obtain this and other results about the structure of torsion $R$-submodules
of $\mathrm{Jac}\left(C\right)\left[l^{\infty}\right]$ as a consequence
of the more general result that the density of $\mathrm{Jac}\left(C\right)\left[l^{\infty}\right]\left[P\left(F\right)\right]$
as a $\mathbb{Z}_{l}\left[F\right]/\left\langle P\left(F\right)\right\rangle $-module
converges weakly to $\mu_{\mathbb{Z}_{l}\left[F\right]/\left(P\left(F\right)\right)}$
in the $g,q$-limit.

In particular we prove a technical result which says that if $R$
is any finite $\mathbb{Z}_{l}$-algebra, then for any sequence of
measures on the set of $R$-modules convergence of moments implies
convergence of measures under suitable conditions. We combined this
with the above mentioned moment results of \cite{lipnowskitsimerman}.

To obtain explicit formulas in our results we also extend the random
model in \cite{lipnowskitsimerman} to any finite $\mathbb{Z}_{l}$-algebra
$R$ and prove all the analogous properties in this case. The measure
is no longer inversely proportional to the size of the automorphism
group in general, though this does hold if for example $R$ contains
$\mathbb{Z}_{l}$. We also prove an explicit formula for the distribution
of ranks and its moments in this model.

\subsection{Jacobians of hyperelliptic curves}

Our main result is the following explicit formula for the densities
of factors of the characteristic polynomial of Frobenius acting on
$\mathrm{Jac}\left(C\right)$$\left[l\right]$.

For any ring $R$ let $S_{R}$ be the set of finite $R$-modules.
We will use $\mathrm{Prob}\left(\cdot\right)$ to denote the density
of points in $\mathcal{M}_{g}\left(\mathbb{F}_{q}\right)$ for which
some property holds (implicitly depending on $q$ and $g$). For $C\in\mathcal{M}_{g}\left(\mathbb{F}_{q}\right)$
let $P_{C}\left(X\right)$ denote the characteristic polynomial of
the Frobenius acting on $\mathrm{Jac}\left(C\right)\left[l\right]$.
Throughout the paper we will use the notation $\eta\left(\mathbb{F}\right)=\prod_{u=1}^{\infty}\left(1-\left|\mathbb{F}\right|^{-u}\right)$
for any field $\mathbb{F}$.
\begin{thm}
\label{thm: main thm char poly densities}Let $P\left(X\right)=\prod_{i=1}^{s}P_{i}\left(X\right)^{m_{i}}$
where the $P_{i}\left(X\right)\in\mathbb{F}_{l}\left[X\right]$ are
irreducible polynomials which are coprime and such that $l\nmid P\left(q\right)$.
Let $R_{i}=\mathbb{F}_{l}\left[X\right]/\left(P_{i}\left(X\right)^{m_{i}+1}\right)$
and $\mathbb{F}_{i}=R_{i}/\left(P_{i}\left(X\right)\right)$. Assume
$\prod_{i=1}^{s}\eta\left(\mathbb{F}_{i}\right)>1/2$. Let $\epsilon>0$.
Then there exists $N$ such that if $g,q>N$
\[
\left|\mathrm{Prob}\left(P\left(X\right)\mid\mid P_{C}\left(X\right)\right)-\prod_{i=1}^{s}\sum_{\substack{M\in S_{R_{i}}\\
\dim_{\mathbb{F}_{l}}M=m_{i}
}
}\frac{\eta\left(\mathbb{F}_{i}\right)}{\left|\mathrm{Aut}_{R_{i}}\left(M\right)\right|}\right|<\epsilon.
\]
\end{thm}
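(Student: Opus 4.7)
The plan is to rephrase the event $\{P(X) \| P_C(X)\}$ as an isomorphism-class condition on a single finite module attached to $C$, then invoke the main density theorem announced in the introduction together with the explicit description of $\mu_R$ established elsewhere in the paper. First I would choose monic lifts $\tilde{P}_i(X) \in \mathbb{Z}_l[X]$ of the $P_i(X)$; by Hensel's lemma these remain pairwise coprime, so setting $\tilde{P}(X) = \prod_i \tilde{P}_i(X)^{m_i+1}$ and $R = \mathbb{Z}_l[F]/(\tilde{P}(F))$, the Chinese Remainder Theorem yields $R \cong \prod_i R'_i$ with $R'_i = \mathbb{Z}_l[F]/(\tilde{P}_i(F)^{m_i+1})$, whose mod-$l$ reduction is the $\prod_i R_i$ of the statement. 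Write $M_C := \mathrm{Jac}(C)[l^\infty][\tilde{P}(F)]$, a finite $R$-module.

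The key translation step is to show that $P(X) \| P_C(X)$ in $\mathbb{F}_l[X]$ if and only if, for each $i$, the multiplicity of $P_i$ in $P_C \bmod l$ equals $m_i$. By Jordan decomposition of $\mathrm{Jac}(C)[l]$ as an $\mathbb{F}_l[F]$-module, this happens precisely when
\[
\dim_{\mathbb{F}_l} \ker\bigl(P_i(F)^{m_i+1} : \mathrm{Jac}(C)[l] \to \mathrm{Jac}(C)[l]\bigr) = m_i \deg P_i:
\]
if the $P_i$-Jordan block sizes on the $P_i$-primary part are $a_{i,1}\geq a_{i,2}\geq \cdots$, the left-hand side equals $\deg P_i\cdot \sum_j \min(a_{i,j}, m_i+1)$, and equality forces every $a_{i,j}\leq m_i$ together with $\sum_j a_{i,j}=m_i$. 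Equivalently, $P \| P_C$ holds iff $l M_C = 0$ and, under the splitting $M_C = \bigoplus_i M_{C,i}$ induced by $R/lR \cong \prod_i R_i$, each component $M_{C,i} \in S_{R_i}$ satisfies the dimension condition in the theorem statement. The hypothesis $l \nmid P(q)$ is needed to rule out forced interactions between the factors of $P$ and the Weil functional equation of $P_C$.

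Next I would invoke the main density theorem (the convergence of the distribution of $\mathrm{Jac}(C)[l^\infty][Q(F)]$ to $\mu_{\mathbb{Z}_l[F]/(Q(F))}$, announced in the introduction and obtained by combining the moment computations of \cite{lipnowskitsimerman} with the technical moments-to-measures result) applied to $Q = \tilde{P}$: the distribution of $M_C$ as an $R$-module converges weakly to $\mu_R$ in the $g,q$-limit. The product decomposition $\mu_R = \prod_i \mu_{R'_i}$, proved elsewhere in the paper as a property of the extended random model, factorizes the limiting probability as
\[
\prod_{i=1}^{s} \sum_{\substack{M_i \in S_{R_i}\\ \dim_{\mathbb{F}_l} M_i = m_i}} \mu_{R'_i}(M_i),
\]
and the explicit formula for $\mu_{R'_i}$ restricted to modules killed by $l$ (also established elsewhere in the paper) reduces each $\mu_{R'_i}(M_i)$ to $\eta(\mathbb{F}_i)/|\mathrm{Aut}_{R_i}(M_i)|$, producing exactly the expression in the theorem.

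The step I expect to require the most care is passing from weak convergence, which a priori controls only the probability of any single fixed isomorphism class, to the $\epsilon$-approximation of $\mathrm{Prob}(P \| P_C)$, since the latter is an infinite disjoint union of such classes. This is where the hypothesis $\prod_i \eta(\mathbb{F}_i) > 1/2$ should enter: it should yield a uniform tail bound on the $\mu_R$-mass of $R_i$-modules of large length, together with a matching bound on their empirical frequency coming from the moment estimates, so that one can first truncate to a finite set of $M_i$ at a cost below $\epsilon/2$ and then apply weak convergence uniformly on the truncated family.
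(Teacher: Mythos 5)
Your overall route — lift $P$ to $\mathbb{Z}_l[X]$, apply the cokernel density theorem to a suitable $R$-module attached to $C$, factor $\mu_R$ via CRT, and then appeal to an explicit formula for each factor — is the same as the paper's. But the translation step contains a genuine error, and this error is not a cosmetic one: it would change the numerical answer.

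You assert that $P \,\|\, P_C$ holds if and only if $l M_C = 0$ together with the dimension condition, where $M_C = \mathrm{Jac}(C)[l^\infty][\tilde P(F)] \cong T_l(\mathrm{Jac}(C))/\tilde P(F)T_l(\mathrm{Jac}(C))$. The claim $lM_C=0$ is false. Divisibility of $P_C$ is a statement about $\mathrm{Jac}(C)[l]$ and constrains only $M_C/lM_C$, not the higher $l$-power torsion of $M_C$. Concretely, take $g=1$, $P=X-1$, $m=1$, and suppose the Frobenius eigenvalue $\alpha$ satisfies $v_l(\alpha-1)=1$ while the other eigenvalue $\beta$ has $v_l(\beta-1)=0$; then $(X-1)\,\|\,P_C$, yet
\[
T_l/(F-1)^2 T_l \cong \mathbb{Z}_l/(\alpha-1)^2 \oplus \mathbb{Z}_l/(\beta-1)^2 \cong \mathbb{Z}/l^2,
\]
which is not killed by $l$. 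So your set of isomorphism classes is a strict subset of the correct one.

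Moreover, summing $\mu_{R'_i}(M_i)$ only over $l$-torsion $M_i$ (as you propose in the last step) gives the wrong value, not merely a slight undercount, because $\mu_{R'_i}$ need not charge such modules at all. With $R'_i = \mathbb{Z}_l[X]/(\tilde P_i(X)^{m_i+1})$ and $m_i=1$, $\deg P_i =1$, the only $l$-torsion module of $\mathbb{F}_l$-dimension $1$ is $\mathbb{F}_l = R'_i/\mathfrak m$; but $\mathfrak m$ is not principal, one checks $\dim_{\mathbb{F}_l}\mathrm{Tor}^1_{R'_i}(\mathbb{F}_l,\mathbb{F}_l)=2$, hence $d(\mathbb{F}_l)=-1<0$ and $\mathbb{F}_l\notin T_{R'_i}$, so $\mu_{R'_i}(\mathbb{F}_l)=0$. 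Your formula would therefore output $0$, whereas the correct value (and the theorem's) is $\eta(\mathbb{F}_l)/(l-1)$. The mass that should appear is distributed over the \emph{non}-$l$-torsion modules in $T_{R'_i}$ (e.g.\ the various $\mathbb{Z}/l^{2k}$ with $X$ acting as $1+l^k u$), and the mechanism that collapses this infinite family to the finite sum over $R_i=\mathbb{F}_l[X]/(P_i^{m_i+1})$-modules is precisely Lemma~\ref{lem:prob reduced mod p} combined with Lemma~\ref{lem:rank prob polynomials}, the latter establishing the nontrivial fact that all rank-$m_i$ modules over $R_i$ lie in $T_{R_i,0}$. Your write-up never invokes the first of these and treats the second as an innocuous formula "restricted to modules killed by $l$," which misses where the actual work is.

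Two smaller remarks. First, the lifts $\tilde P_i$ must be chosen so that their roots are not Weil $q$-numbers; this is needed for Lemma~\ref{lem:mu' equals mu} and Theorem~\ref{thm:cokernel density} to apply and is not automatic from $l\nmid P(q)$, so it should be stated. Second, your closing paragraph locates the role of $\prod_i\eta(\mathbb{F}_i)>1/2$ in a tail bound for weak convergence; in fact, once one knows the limit is a probability measure (Theorem~\ref{thm:conv of moments gives measure}), weak convergence on a countable discrete space is automatically total-variation convergence, so there is no escape-of-mass problem at that stage. The hypothesis is used instead to identify the limit measure with $\mu_R$ via Lemma~\ref{lem:mu_R unique}.
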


Note that the sum on the right is finite since ($R_{i}$ is a finite
ring) and can be computed explicitly (see Corollary \ref{cor:explicit formula}).

The condition $\prod_{i=1}^{s}\eta\left(\mathbb{F}_{i}\right)>1/2$
is satisfied when all the $\left|\mathbb{F}_{i}\right|$ are large
enough, which in turn holds if $l$ is large enough, or $\deg P_{i}$
is large enough for all $i$. For example if $s=1$ it holds for all
$l>2$.

Also note that $\mathrm{Prob}\left(P_{C}\left(X\right)=P\left(X\right)\right)=0$
for any fixed $P\left(X\right)\in\mathbb{F}_{l}\left[X\right]$, since
$\deg P_{C}$ goes to infinity, hence reducing to the question of
divisibilty as in the above theorem is necessary to obtain a non-trivial
answer. As an example we have the following Corollary.
\begin{cor}
\label{cor:intro examples}For $l>2$ and any $a\in\mathbb{F}_{l}$
we have
\[
\mathrm{Prob}\left(\left(X-a\right)\mid\mid P_{C}\left(X\right)\right)\longrightarrow\prod_{i=1}^{\infty}\left(1-l^{-i}\right)\left[\frac{1}{\left(l^{2}-1\right)\left(l^{2}-l\right)}+\frac{1}{l^{2}-l}\right]
\]
 and 
\[
\mathrm{Prob}\left(\left(X-a\right)\nmid P_{C}\left(X\right)\right)\longrightarrow\prod_{i=1}^{\infty}\left(1-l^{-i}\right)
\]
 in the $g,q$-limit.
\end{cor}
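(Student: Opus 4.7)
The plan is to apply Theorem~\ref{thm: main thm char poly densities} directly, specializing to $s=1$ and $P_1(X)=X-a$, and then choosing $m_1$ according to the desired divisibility condition.

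For the non-divisibility asymptotic, I would take $m_1=0$, giving $R_1=\mathbb{F}_l[X]/(X-a)\cong\mathbb{F}_l$ and $\mathbb{F}_1=\mathbb{F}_l$. The convergence hypothesis $\eta(\mathbb{F}_1)>1/2$ is verified directly: for $l=3$ the product $\prod_{u\geq1}(1-3^{-u})$ exceeds $1/2$ by a numerical check, and $\eta(\mathbb{F}_l)$ is monotone increasing in $l$. Moreover $l\nmid P(q)=q-a$ holds for the cofinally many $q$ with $q\not\equiv a\pmod l$, so the hypotheses of the theorem are in force. The summation collapses to the single trivial module with automorphism group of order one, yielding the claimed limit $\eta(\mathbb{F}_l)=\prod_{i\geq1}(1-l^{-i})$.

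For the exact-divisibility asymptotic, I would take $m_1=1$, so that $R_1=\mathbb{F}_l[X]/(X-a)^2$ is the ring of dual numbers $\mathbb{F}_l[\epsilon]/(\epsilon^2)$ and $\mathbb{F}_1=\mathbb{F}_l$; the convergence hypothesis is verified as above. The substantive step is to enumerate the finite $R_1$-modules contributing to the sum and compute their automorphism groups. Since $R_1$ is a local Artinian principal ideal ring whose only indecomposable finite modules are the residue field $\mathbb{F}_l$ (on which $\epsilon$ acts as zero) and the regular module $R_1$ itself, the two contributing modules are $M_1=\mathbb{F}_l\oplus\mathbb{F}_l$ and $M_2=R_1$. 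Their automorphism groups are read off directly: on $M_1$ the $R_1$-action is trivial, so $\mathrm{Aut}_{R_1}(M_1)=\mathrm{GL}_2(\mathbb{F}_l)$ of order $(l^2-1)(l^2-l)$; on $M_2$ the $R_1$-module endomorphisms are multiplications by elements of $R_1$, so $\mathrm{Aut}_{R_1}(M_2)=R_1^{\times}=\{u_0+u_1\epsilon : u_0\in\mathbb{F}_l^{\times}\}$ of order $l(l-1)=l^2-l$. Substituting into the formula of the theorem produces the stated expression.

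No genuine analytic obstacle arises here, as the substantive convergence work has been carried out in Theorem~\ref{thm: main thm char poly densities}; the remaining content of the corollary is the finite combinatorial identification of the contributing modules and the routine computation of their automorphism groups over the dual-number ring.
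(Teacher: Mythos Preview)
Your approach is the same as the paper's (Corollary~\ref{cor:examples}): both invoke the main density theorem, specialize to $s=1$ and $P_1(X)=X-a$, identify the ring $R_1=\mathbb{F}_l[X]/(X-a)^2$, enumerate the contributing modules, and compute their automorphism groups over the dual-number ring. Your verification that $\eta(\mathbb{F}_l)>1/2$ for $l\geq 3$ and your automorphism computations are correct.

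There is, however, an unexplained step. With $m_1=1$, Theorem~\ref{thm: main thm char poly densities} instructs you to sum over $M\in S_{R_1}$ with $\dim_{\mathbb{F}_l}M=m_1=1$; the unique such $R_1$-module is the residue field $\mathbb{F}_l$, with $\lvert\mathrm{Aut}_{R_1}(\mathbb{F}_l)\rvert=l-1$, which taken literally would yield $\eta(\mathbb{F}_l)/(l-1)$. You instead enumerate the two modules $\mathbb{F}_l\oplus\mathbb{F}_l$ and $R_1$, each of $\mathbb{F}_l$-dimension $2$, without justifying the passage from dimension $m_1=1$ to dimension $2$. The paper's own proof of Corollary~\ref{cor:examples} makes the identical jump (it sets $R=\mathbb{F}_l[X]/(X-a)^2$, corresponding to $m=1$ in Lemma~\ref{lem:rank prob polynomials}, and then lists modules of $\mathbb{F}_l$-rank $2$), so the discrepancy appears to originate in the paper rather than in your reading of it; but as written, your enumeration does not follow from the theorem you invoke, and a careful reader will be stuck at exactly this point.
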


We also prove the asymptotic independence of the appearance of certain
submodules of $\mathrm{Jac}\left(C\right)\left[l^{\infty}\right]$.
Call $\lambda\in\overline{\mathbb{Q}_{l}}$ a Weil $q$-number if
it satisfies $\left|\phi\left(\lambda\right)\right|=\sqrt{q}$ for
any embedding $\phi:\overline{\mathbb{Q}_{l}}\longrightarrow\mathbb{C}$.
\begin{thm}
\label{thm:main thm indep eigenspaces}Let $\epsilon>0$. Let $P_{i}\left(X\right)\in\mathbb{Z}_{l}\left[X\right]$
be irreducible, pairwise coprime modulo $l$, and $l\nmid P_{i}\left(q\right)$
for $i=1,\ldots,s$. Suppose the roots of $P_{i}\left(X\right)$ are
not Weil $q$-numbers and $\prod_{i=1}^{s}\eta\left(\mathbb{F}_{i}\right)>1/2$.

Let $R=\mathbb{Z}_{l}\left[X\right]/\left(\prod_{i=1}^{s}P_{i}\left(X\right)^{m_{i}}\right)$.
Let $M\in S_{R}$ and let $M_{i}=M/\left(P_{i}\left(X\right)^{m_{i}}\right)M$.
There exists $N$ such that if $g,q>N$ then 
\[
\left|\mathrm{Prob}\left(\mathrm{Jac}\left(C\right)\left[\prod_{i=1}^{s}P_{i}\left(F\right)^{m_{i}}\right]\cong M\right)-\prod_{i=1}^{s}\mathrm{Prob}\left(\mathrm{Jac}\left(C\right)\left[P_{i}\left(F\right)^{m_{i}}\right]\cong M_{i}\right)\right|<\epsilon.
\]
\\
\end{thm}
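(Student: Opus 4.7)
The strategy is to reduce the asymptotic independence to two ingredients already set up in the paper: the convergence (established as a consequence of the main technical theorem and the moment computation of \cite{lipnowskitsimerman}) of the distribution of the relevant $\mathrm{Jac}(C)$-torsion modules to the random-model measures $\mu_R$ and $\mu_{R_i}$, and a Chinese Remainder Theorem factorization $\mu_R \cong \bigotimes_i \mu_{R_i}$ for the extended random model.

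First I would observe that since the $P_i$ are pairwise coprime modulo $l$ and $\mathbb{Z}_l$ is $l$-adically complete, a Hensel-style argument shows the $P_i^{m_i}$ are pairwise coprime in $\mathbb{Z}_l[X]$, so CRT yields a canonical isomorphism $R \cong \prod_{i=1}^{s} R_i$ with $R_i = \mathbb{Z}_l[X]/(P_i^{m_i})$. Under this splitting, every finite $R$-module $M$ decomposes canonically as $\bigoplus_i M_i$ with $M_i = M/(P_i^{m_i})M$, and applied to $\mathrm{Jac}(C)[l^{\infty}]$ this gives
\[
\mathrm{Jac}(C)\!\left[\prod_i P_i(F)^{m_i}\right] \;\cong\; \bigoplus_i \mathrm{Jac}(C)\!\left[P_i(F)^{m_i}\right]
\]
as $R$-modules. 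In particular the event on the left-hand side of the theorem is identical to the joint occurrence of the events on the right-hand side.

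Next I would invoke the weak convergence result advertised in the introduction: under the hypothesis $\prod \eta(\mathbb{F}_i) > 1/2$ together with the non-Weil-$q$-number condition on the roots of the $P_i$ (which guarantees that the surjection moments of \cite{lipnowskitsimerman} all tend to $1$), the distribution of $\mathrm{Jac}(C)[\prod_i P_i(F)^{m_i}]$ viewed as an $R$-module converges pointwise to $\mu_R$. Applying the same result to each factor ring separately gives convergence of the distribution of $\mathrm{Jac}(C)[P_i(F)^{m_i}]$ to $\mu_{R_i}$. Both convergences are quantitative at each fixed module $M$ (resp.\ $M_i$), so by the triangle inequality the theorem reduces to the pointwise identity $\mu_R(M) = \prod_i \mu_{R_i}(M_i)$. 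To obtain this identity I would unwind the definition of $\mu_R$ as the limit in $n$ of the pushforward of Haar measure on $n \times n$ matrices over $R$ along the cokernel map. Under $R \cong \prod_i R_i$, Haar measure factors as a product of Haar measures on matrices over the $R_i$, and the cokernel functor respects the direct product, so $\mathrm{coker}(A) \cong \bigoplus_i \mathrm{coker}(A^{(i)})$; passing to the limit in $n$ gives $\mu_R = \bigotimes_i \mu_{R_i}$.

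The main obstacle I anticipate is the last step, namely the genuine verification that $\mu_R$ factors under the CRT decomposition. It is intuitively clear from the cokernel-of-Haar-random-matrix description, but it requires care with the order of the two limits (matrix size $n \to \infty$ defining the model, and $g,q \to \infty$ governing the curves), and in the paper's extension of the model to non-local finite $\mathbb{Z}_l$-algebras one must check that the normalization used in the product case really is the product of the local normalizations. Once this multiplicativity is in hand — plausibly as one of the "analogous properties" of the extended model the paper promises to prove — the rest of the argument is formal: combine the two convergences with the CRT factorization, and the triangle inequality delivers the uniform $\epsilon$-estimate for all sufficiently large $g,q$.
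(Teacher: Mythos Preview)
Your strategy matches the paper's exactly: the result is Corollary~\ref{cor:indep eigenspaces}, proved by combining Theorem~\ref{thm:cokernel density} (convergence of the empirical distributions to $\mu_R$ and to each $\mu_{R_i}$, via Lemma~\ref{lem:mu' equals mu} to pass from kernels to cokernels) with the factorization $\mu_R(M)=\prod_i \mu_{R_i}(M_i)$, and then the triangle inequality. The only difference is in how that factorization is established. The paper (Lemma~\ref{lem:factor mu}) uses the explicit formula $\mu_R(M)=c_R/|\mathrm{Aut}_R(M)|$ from Corollary~\ref{cor:muR formula special case} (available because $\mathbb{Z}_l\subset R$), and then checks that both $c_R$ and $|\mathrm{Aut}_R(M)|$ split as products over~$i$. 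Your route---Haar measure on $\mathrm{End}_R(R^n)\cong\prod_i\mathrm{End}_{R_i}(R_i^n)$ is the product measure and $\mathrm{coker}$ respects the CRT splitting, so already $\mu_{R,n}(M)=\prod_i\mu_{R_i,n}(M_i)$ for each finite $n$---is equally valid and arguably cleaner, and in particular your worry about interchanging the $n$-limit with the product is unfounded since the identity holds before the limit. One small point to make explicit: applying Theorem~\ref{thm:cokernel density} to each $R_i$ separately needs $\eta(\mathbb{F}_i)>1/2$ individually, but this follows from the hypothesis $\prod_i\eta(\mathbb{F}_i)>1/2$ since each factor is at most~$1$.
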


We now discuss the crucial theorem needed to prove the results stated
above. In Section \ref{sec:The-Cohen-Lenstra-measure} we define a
measure $\mu_{R}$ on the set of $R$-modules $S_{R}$ by taking cokernels
of random matrices over $R$ (see also Theorem \ref{thm:main thm abstract measure}
below). By proving a convergence of moments implies convergence of
measures result (Theorem \ref{thm:conv of moments gives measure})
we can combine it with \cite[Theorem 1.1]{lipnowskitsimerman} to
show that the densities of the cokernel of Frobenius acting on the
Tate module of $C\in\mathcal{M}_{g}\left(\mathbb{F}_{q}\right)$ converge
to $\mu_{R}$.

Let $T_{l}\left(\mathrm{Jac}\left(C\right)\right)$ denot the $l$-adic
Tate module. Let $P\left(X\right)\in\mathbb{Z}_{l}\left[X\right]$
and suppose the roots of $P\left(X\right)$ are not Weil $q$-numbers.
Fix $g,q$ and define the probability measure on $S_{R}$ by 
\[
\mu_{g,q}\left(M\right)=\frac{\left|\left\{ C\in\mathcal{M}_{g}\left(\mathbb{F}_{q}\right)\mid T_{l}\left(\mathrm{Jac}\left(C\right)\right)/\left\langle P\left(F\right)\right\rangle \cong M\right\} \right|}{\left|\mathcal{M}_{g}\left(\mathbb{F}_{q}\right)\right|}.
\]
 Note this measure is defined using the the cokernel of the action
of Frobenius instead of the kernel which we consider above, but by
duality this amounts to the same thing under certain hypothesis (see
Lemma \ref{lem:mu' equals mu}).
\begin{thm}
\label{thm:main thm cokrnel density}Assume $\prod_{i=1}^{s}\eta\left(\mathbb{F}_{i}\right)>1/2$.
The sequence of probability measures $\left\{ \mu_{g,q}\right\} $
converges weakly to $\mu_{R}$ as $g,q\longrightarrow\infty$ along
any sequence with $l\nmid P\left(q\right)$.
\end{thm}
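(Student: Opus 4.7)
The plan is to deduce this from the abstract moment-to-measure result, Theorem \ref{thm:conv of moments gives measure}, combined with the moment computation of \cite[Theorem 1.1]{lipnowskitsimerman}. Concretely, for each finite $R$-module $G$ I will show that the surjection moments
\[
\mathbb{E}_{\mu_{g,q}}\bigl[\#\mathrm{Surj}_{R}(\,\cdot\,,G)\bigr]
\]
converge to $\mathbb{E}_{\mu_{R}}\bigl[\#\mathrm{Surj}_{R}(\,\cdot\,,G)\bigr]$ as $g,q\to\infty$ along sequences with $l\nmid P(q)$, and then invoke Theorem \ref{thm:conv of moments gives measure} under the hypothesis $\prod_{i=1}^{s}\eta(\mathbb{F}_{i})>1/2$ to upgrade moment convergence to weak convergence of measures.

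For the moment convergence on the left, the key observation is that $R=\mathbb{Z}_{l}[F]/\langle P(F)\rangle$ is a quotient of $\mathbb{Z}_{l}[F]$, so a finite $R$-module $G$ is the same data as a finite $\mathbb{Z}_{l}[F]$-module annihilated by $P(F)$. Under this identification, every $\mathbb{Z}_{l}[F]$-surjection from $T_{l}(\mathrm{Jac}(C))$ onto $G$ factors uniquely through the quotient $T_{l}(\mathrm{Jac}(C))/\langle P(F)\rangle$, giving a canonical bijection with $R$-surjections onto $G$. Combining this with the duality between $T_{l}(\mathrm{Jac}(C))$ and $\mathrm{Jac}(C)[l^{\infty}]$ as in Lemma \ref{lem:mu' equals mu}, the $R$-moments of $\mu_{g,q}$ coincide with the $\mathbb{Z}_{l}[F]$-moments considered in \cite[Theorem 1.1]{lipnowskitsimerman}, which under the hypothesis that the roots of $P$ are not Weil $q$-numbers (and $l\nmid P(q)$) converge to $1$ as $g,q\to\infty$.

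On the target side, the moments of $\mu_{R}$ should also equal $1$ for every finite $R$-module $G$. This is a property of the random-matrix construction of $\mu_{R}$ developed in Section \ref{sec:The-Cohen-Lenstra-measure} (Theorem \ref{thm:main thm abstract measure}): $\mu_{R}$ is the weak limit of the distributions of cokernels of Haar-random $n\times n$ matrices over $R$, and a direct computation — generalizing the Friedman--Washington identity to $R$ — yields $\mathbb{E}_{\mu_{R}}[\#\mathrm{Surj}_{R}(\,\cdot\,,G)]=1$ for every $G\in S_{R}$. Hence both sides of the moment comparison are $1$, so the moments of $\mu_{g,q}$ converge to those of $\mu_{R}$.

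The remaining step — and in my view the main obstacle — is passing from this moment convergence to weak convergence of the measures themselves. For general (non-local) $R$ the surjection moments grow quickly enough in $|G|$ that they do not automatically determine the measure; this is precisely what Theorem \ref{thm:conv of moments gives measure} is designed to handle, and its hypothesis is a tightness/summability condition on the sequence whose sharp form is encoded by $\prod_{i=1}^{s}\eta(\mathbb{F}_{i})>1/2$. Once this inequality is used to verify the hypotheses of Theorem \ref{thm:conv of moments gives measure}, the weak convergence $\mu_{g,q}\Rightarrow\mu_{R}$ follows from the matching of moments established in the preceding paragraphs.
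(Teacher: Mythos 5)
Your proposal follows essentially the same route as the paper: reduce to moment convergence via \cite[Theorem 1.1]{lipnowskitsimerman} (together with the identification of $R$-surjections from $T_{l}(\mathrm{Jac}(C))/\langle P(F)\rangle$ with $\mathbb{Z}_{l}[F]$-surjections from the Tate module, and Lemma \ref{lem:mu' equals mu}), observe that $\mu_{R}$ itself has all moments equal to $1$, and upgrade moment convergence to weak convergence of measures. One small but substantive correction: the hypothesis $\prod_{i}\eta(\mathbb{F}_{i})>1/2$ is \emph{not} a hypothesis of Theorem \ref{thm:conv of moments gives measure} — that theorem, with no constraint on $\eta$, already yields tightness and weak convergence to \emph{some} probability measure $\nu$ with all moments $1$. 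The inequality is used only in the final identification step, namely Lemma \ref{lem:mu_R unique}, which asserts (under $c_{R}>1/2$, equivalently $\prod_{i}\eta(\mathbb{F}_{i})>1/2$) that the moment condition $\int f_{A}\,d\nu=1$ for all $A$ determines $\nu$ uniquely, and hence forces $\nu=\mu_{R}$. Your phrase ``follows from the matching of moments'' implicitly relies on exactly this uniqueness, so the logic is sound, but you should cite Lemma \ref{lem:mu_R unique} and attach the $\eta$-hypothesis there rather than to the tightness step.
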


We prove this theorem more generally for any sequence of measures
$\left\{ \nu_{n}\right\} $ on $S_{R}$ whose moments converge to
1, where $R$ is any finite $\mathbb{Z}_{l}$-algebra. This result
is proven by showing that there is no escape of mass along the sequence
$\left\{ \nu_{n}\right\} $ with respect to the integrals $\int_{S_{R}}\mathrm{Surj}_{R}\left(X,A\right)d\nu_{n}\left(X\right)$
for any $A\in S_{R}$. This is done by finding $A'\in S_{R}$ for
which the ratio $\mathrm{Surj}_{R}\left(X,A\right)/\mathrm{Surj}_{R}\left(X,A'\right)$
can be made arbitrarily small and taking advantage of the fact that
the moments are uniformly bounded.

\subsection{A random model}

On the random model side we prove the following theorem on the measure
$\mu_{R}$ which is analogous to the classic Cohen-Lenstra measure,
and generalizes Theorem 1.2 from \cite{lipnowskitsimerman}.

They in fact constructed a more general random model for triples of
the form $\left(G,F,\omega\right)$ where $G$ is a finite $l$-group
on which $F$ acts invertibly, $\omega\in\wedge^{2}G$, and $F\omega=q\omega$
(see Section 1.3.2 of \cite{lipnowskitsimerman}) which is a refinment
of the classical random matrix model of Friedman-Washington \cite{FriedmanWashington}.
They conjectured a refinement of $\mu_{g,q}$, which additionally
accounts for isomorphism of the Weil pairing on $\mathrm{Jac}\left(C\right)$
with $\omega$, converges to the measure given by their random model
in the $g$ limit. However their proofs were restricted to the case
when $\omega=0$ and Weil pairing is trivial, that is the measure
becomes $\mu_{g,q}$. Theorem \ref{thm:main thm cokrnel density}
proves this conjecture in the $g,q$-limit (the condition $l\nmid P\left(q\right)$
implies the Weil pairing is trivial).

There is also the recent work of Cheong-Huang \cite{cheonghuang}
who proved identities related to the measure $\mu_{R}$ in the case
when $R$ is a complete discrete valuation ring.

We let $\left\{ m_{1},\ldots,m_{s}\right\} $ be the maximal ideals
of $R$ and $\mathbb{F}_{i}=R/m_{i}$. For any $M\in S_{R}$ define
$d_{m_{i}}\left(M\right)=\dim_{\mathbb{F}_{i}}M\otimes_{R}\mathbb{F}_{i}-\dim_{\mathbb{F}_{i}}\mathrm{Tor}_{R}^{1}\left(M,\mathbb{F}_{i}\right)$.
Let $T_{R}\subseteq S_{R}$ be the set of $R$-modules which are isomorphic
to the cokernel of some square matrix over $R$. For $\vec{j}\in\mathbb{Z}^{s}$
define the subset $T_{R,\vec{j}}\subset S_{R}$ by 
\begin{equation}
T_{R,\vec{j}}=\left\{ M\in T_{R}\mid d_{m_{i}}\left(M\right)=j_{i}\right\} .\label{eq:partition TR-1}
\end{equation}

\begin{thm}
\label{thm:main thm abstract measure}Assume $\prod_{i=1}^{s}\eta\left(\mathbb{F}_{i}\right)>1/2$.
There exists a unique probability measure $\mu_{R}$ on $S_{R}$ such
that 
\[
\int_{S_{R}}\mathrm{Surj}_{R}\left(X,A\right)d\mu_{R}\left(X\right)=1
\]
 for all $A\in S_{R}$. The measure $\mu_{R}$ is supported on $\bigcup_{\vec{j}\in\mathbb{Z}_{\ge0}^{s}}T_{R,\vec{j}}$
and for every $\vec{j}\in\mathbb{Z}_{\ge0}^{s}$ and $M\in T_{R,\vec{j}}$
we have the formula
\[
\mu_{R}\left(M\right)=\frac{c_{R,\vec{j}}}{\left|\mathrm{Aut}_{R}M\right|}
\]
 where 
\[
c_{R,\vec{j}}=\lim_{n\longrightarrow\infty}\prod_{i=1}^{s}\frac{\mathcal{N}_{i}\left(n,j_{i}\right)|\mathrm{GL}_{n-j_{i}}\left(\mathbb{F}_{i}\right)|}{|M_{n\times n-j_{i}}\left(\mathbb{F}_{i}\right)|}
\]
 and $\mathcal{N}_{i}\left(n,j_{i}\right)$ denotes the number of
subspaces of $\mathbb{F}_{i}^{n}$ of dimension $j_{i}$.

If $\mathbb{Z}_{l}\subset R$ then $\mu_{R}\left(M\right)$ is inversely
proportional to $\left|\mathrm{Aut}_{R}M\right|$ for all $M\in S_{R}$.
\end{thm}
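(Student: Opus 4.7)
The plan is to construct $\mu_R$ as the weak limit of the probability measures $\mu_{R,n}$ on $S_R$ given by cokernels of Haar-uniformly random $T \in M_{n \times n}(R)$. For any $A \in S_R$, swapping the order of summation gives
\[
\int_{S_R} \mathrm{Surj}_R(X,A)\,d\mu_{R,n}(X) = \sum_{\phi \in \mathrm{Surj}_R(R^n, A)} \mathrm{Prob}_T\bigl(\phi \circ T = 0\bigr) = \frac{|\mathrm{Surj}_R(R^n, A)|}{|A|^n},
\]
since $\phi \circ T$ is uniformly distributed in $\mathrm{Hom}_R(R^n,A)$ when $\phi$ is surjective. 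A factor-by-factor analysis over the localizations at the $m_i$ shows that this ratio tends to $1$ as $n \to \infty$. Combined with Theorem \ref{thm:conv of moments gives measure}, whose tightness hypothesis is supplied by $\prod_{i=1}^{s}\eta(\mathbb{F}_i) > 1/2$, this yields a weak limit $\mu_R$ with all moments equal to $1$; any other measure satisfying these identities is the same weak limit and hence equals $\mu_R$, giving uniqueness.

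For the explicit formula I would count matrices $T$ with $\mathrm{coker}(T) \cong M$ by partitioning according to the induced surjection $\pi \colon R^n \twoheadrightarrow M$. Each $T$ factors as $\iota_\pi \circ \sigma$ where $\sigma \colon R^n \twoheadrightarrow \ker(\pi)$ and $\iota_\pi$ is the inclusion; the $\mathrm{Aut}_R(M)$-symmetry then gives
\[
\left|\{T \in M_{n\times n}(R) : \mathrm{coker}(T) \cong M\}\right| = \frac{|\mathrm{Surj}_R(R^n, M)| \cdot |\mathrm{Surj}_R(R^n, K)|}{|\mathrm{Aut}_R(M)|},
\]
where $K$ is any (stably-determined) kernel of such a $\pi$. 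Dividing by $|R|^{n^2}$ and decomposing at the maximal ideals, the surjection counts factor over the $m_i$; choosing minimal free presentations via Nakayama identifies the $i$-th factor with $\mathcal{N}_i(n, j_i)\,|\mathrm{GL}_{n-j_i}(\mathbb{F}_i)|/|M_{n\times n-j_i}(\mathbb{F}_i)|$ for $j_i = d_{m_i}(M)$, producing the stated product formula for $c_{R,\vec{j}}$. A module can arise as a square cokernel only if every $d_{m_i}(M) \geq 0$, so $\mu_R$ is supported on $\bigcup_{\vec{j}\geq 0} T_{R,\vec{j}}$.

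For the last assertion, when $\mathbb{Z}_l \subset R$ each residue field $\mathbb{F}_i$ is an extension of $\mathbb{F}_l$, and the combinatorial factors in $c_{R,\vec{j}}$ combine so that their product becomes independent of $\vec{j}$; the measure then takes the form $c/|\mathrm{Aut}_R M|$ on $T_R$ with $c$ forced by total mass $1$. The main obstacle is the explicit $m_i$-adic decomposition in the second paragraph: one must verify that the limit as $n \to \infty$ of the surjection counts both exists and has the stated product form, and the hypothesis $\prod_i \eta(\mathbb{F}_i) > 1/2$ enters precisely in controlling the tails of these counts uniformly in $n$.
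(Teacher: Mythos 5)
Your overall structure matches the paper's: define $\mu_R$ as the weak limit of the pushforwards of Haar measure under $T\mapsto\mathrm{coker}\,T$, compute $\int f_A\,d\mu_{R,n}=|\mathrm{Surj}_R(R^n,A)|/|A|^n\to 1$, invoke Theorem \ref{thm:conv of moments gives measure}, and then derive the explicit formula by double-counting pairs $(T,\pi)$ with the free $\mathrm{Aut}_R(M)$-action, reducing surjection counts to the semisimple quotient $R/J(R)$ via Nakayama. That part of the plan is sound and is essentially how the paper proceeds in Lemma \ref{lem:measure moments are 1} and Theorem \ref{thm:The-measure-}.

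The genuine gap is in the uniqueness argument. You attribute the hypothesis $\prod_i\eta(\mathbb{F}_i)>1/2$ to the ``tightness hypothesis'' of Theorem \ref{thm:conv of moments gives measure}, but that theorem has no such hypothesis: tightness there is supplied by the escape-of-mass argument (Proposition \ref{prop:surj bound} and Lemma \ref{lem:bounded integrals}), which is purely about constructing a comparison module $A'=A\oplus R/J(R)$ and never mentions $\eta$. Moreover, the sentence ``any other measure satisfying these identities is the same weak limit and hence equals $\mu_R$'' is circular: a measure with all moments equal to $1$ need not arise as a weak limit of the $\mu_{R,n}$, and a priori different subsequences of $\{\mu_{R,n}\}$ could converge to different limits. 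What is actually needed is the moment-determinacy statement Lemma \ref{lem:mu_R unique} (imported from \cite[Lemma~2.4]{lipnowskitsimerman}): under $c_R>1/2$ a positive function with all surjection-moments equal to $1$ is unique, and $c_R=\prod_i\eta(\mathbb{F}_i)$, so the $1/2$ hypothesis is precisely this. Without that lemma neither uniqueness nor convergence of the full sequence is established.

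A secondary issue is the formula derivation: $\ker\pi$ for $\pi\in\mathrm{Surj}_R(R^n,M)$ is not unique up to isomorphism for fixed $n$, so the claimed identity
\[
|\{T:\mathrm{coker}\,T\cong M\}|=\frac{|\mathrm{Surj}_R(R^n,M)|\cdot|\mathrm{Surj}_R(R^n,K)|}{|\mathrm{Aut}_R(M)|}
\]
is only valid in the limit. The correct double count gives $\sum_{\pi}|\mathrm{Surj}_R(R^n,\ker\pi)|$; the paper then fixes a presentation $g\colon R^m\to M$ and shows the proportion of $\pi$ with $\ker\pi\cong\ker g\oplus R^{n-m}$ tends to $1$ as $n\to\infty$. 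Your ``stably-determined'' parenthetical gestures at this, but the step needs to be carried out, since the constant $c_{R,\vec{j}}$ is extracted exactly from the asymptotics of this dominant term.
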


In general $\mu_{R}$ is not inversely proportional to the size of
the automorphism group as the constant $c_{R,\vec{j}}$ depends on
the module $M$. If $\mathbb{Z}_{l}\subset R$ it turns out that $T_{R,\vec{j}}$
is empty unless $\vec{j}=\vec{0}$.

Additionally in Section \ref{subsec:The-distribution-of} we prove
some formulas for the distribution and moments of the function $\mathrm{rk}_{l}$
on $S_{R}$ which are needed to prove our theorems, but which may
be of independent interest.

\section{Measures on $R$-modules}

\subsection{Preliminaries}

Let $R$ be a ring which is a finite $\mathbb{Z}_{l}$-algebra. Let
$J=J\left(R\right)$ be the Jacobson radical of $R$. Let $\left\{ m_{1},\ldots,m_{s}\right\} $
be the set of maximal ideals of $R$. Let $\mathbb{F}_{i}\cong R/m_{i}$
be the residue field of $m_{i}$. By the Chinese remainder theorem
we have $R/J\left(R\right)\cong\prod_{i=1}^{s}\mathbb{F}_{i}$ and
for any finite $R$-module $M$ we have $M/J\left(R\right)M\cong\prod_{i=1}^{s}V_{i}$
where $V_{i}\cong M/m_{i}M$ is a finite-dimensional $\mathbb{F}_{i}$
vector space.

Let $S_{R}$ be the set of finite $R$-modules. For any $e\ge1$ let
$S_{e}\subset S_{R}$ be the set of modules $M$ such that $m_{i}^{e}M=0$
for all $i$. Clearly every finite $R$-module is contained in $S_{e}$
for large enough $e$. We have $S_{e}=\left\{ M\otimes_{R}\prod_{i=1}^{s}R/m_{i}^{e}\mid M\in S_{R}\right\} $.
\begin{lem}
For each $i$ the field $\mathbb{F}_{i}$ is a finite extension of
$\mathbb{F}_{l}$.
\end{lem}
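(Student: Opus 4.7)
The plan is to reduce to the fact that a field which is finite as a $\mathbb{Z}_l$-module must in fact have characteristic $l$, hence be a finite extension of $\mathbb{F}_l$. First I would observe that since $R$ is a finite $\mathbb{Z}_l$-algebra (that is, finitely generated as a $\mathbb{Z}_l$-module), its quotient $\mathbb{F}_i = R/m_i$ is also finitely generated as a $\mathbb{Z}_l$-module. So the real work is to show that the composition $\mathbb{Z}_l \to R \to \mathbb{F}_i$ factors through $\mathbb{F}_l = \mathbb{Z}_l/(l)$, i.e. that $l \in m_i$.

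For this factorization, I would argue in either of two equivalent ways. The direct route: if the image of $l$ in the field $\mathbb{F}_i$ were nonzero, then it would be a unit, so the map $\mathbb{Z}_l \to \mathbb{F}_i$ would extend to a ring map $\mathbb{Q}_l \to \mathbb{F}_i$. But then $\mathbb{F}_i$ would contain $\mathbb{Q}_l$ as a $\mathbb{Z}_l$-submodule, contradicting the fact that $\mathbb{F}_i$ is finitely generated over $\mathbb{Z}_l$ (a f.g.\ $\mathbb{Z}_l$-module has bounded $l$-divisibility, whereas $\mathbb{Q}_l$ does not). Alternatively, one can invoke integrality: since $R$ is module-finite over $\mathbb{Z}_l$ it is integral over $\mathbb{Z}_l$, and by lying-over the contraction $m_i \cap \mathbb{Z}_l$ is a maximal ideal of $\mathbb{Z}_l$, which must be $(l)$; in particular $l \in m_i$.

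Having established $l \in m_i$, the map $\mathbb{Z}_l \to \mathbb{F}_i$ factors through $\mathbb{F}_l$, making $\mathbb{F}_i$ an $\mathbb{F}_l$-algebra. Since $\mathbb{F}_i$ is a quotient of the $\mathbb{F}_l$-module $R/lR$, and $R/lR$ is finitely generated over $\mathbb{F}_l$ (as $R$ is finitely generated over $\mathbb{Z}_l$), we conclude that $\mathbb{F}_i$ is a finite-dimensional $\mathbb{F}_l$-vector space. Being a field, it is therefore a finite field extension of $\mathbb{F}_l$. There is no real obstacle here; the only subtlety to double-check is that "finite $\mathbb{Z}_l$-algebra" is indeed meant in the module-finite sense (which is clear from context, since all the ensuing arguments use the Noetherian and Jacobson structure of $R$).
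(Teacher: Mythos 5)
Your proposal is correct. Your first route is essentially the paper's argument, slightly repackaged: the paper assumes $l \notin m_i$, observes that then $\mathbb{Z}_l \hookrightarrow R/m_i$ and hence $\mathbb{Q}_l \subset R/m_i$, and derives a contradiction from the fact that $\mathbb{Q}_l$ cannot be a Noetherian $\mathbb{Z}_l$-module while $R/m_i$ must be (being a quotient of the module-finite, hence Noetherian, $\mathbb{Z}_l$-module $R$). You reach the same contradiction by noting that $\mathbb{Q}_l$ fails to be finitely generated over $\mathbb{Z}_l$ (unbounded $l$-divisibility); since f.g.\ over a Noetherian ring is equivalent to Noetherian, these are the same obstruction phrased two ways.

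Your second route, via integrality and the Cohen--Seidenberg theorems, is a genuinely different and arguably cleaner argument: module-finite implies integral, and in an integral extension the contraction of a maximal ideal is maximal, so $m_i$ contracts to $(l)$ directly, with no need to set up a contradiction at all. One small terminological note: the fact you want is not quite ``lying over'' (which says every prime downstairs is contracted from upstairs) but the companion result that in an integral extension a prime $q$ of $B$ is maximal if and only if $q \cap A$ is maximal in $A$. Also, be a little careful that $\mathbb{Z}_l \to R$ need not be injective (e.g.\ $R = \mathbb{F}_l[X]/(P)$), so one should apply the statement to the image of $\mathbb{Z}_l$ in $R$; this is harmless since integrality and the preimage computation both pass through that image. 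With those two cosmetic adjustments the integrality route is a tidy alternative to what the paper does.
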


\begin{proof}
Suppose towards a contradiction that $l\notin m_{i}$. Let $\phi:\mathbb{Z}_{l}\longrightarrow R$
defined by $\phi\left(x\right)=x\cdot1_{R}$. Since $\phi^{-1}\left(m_{i}\right)$
is a prime ideal of $\mathbb{Z}_{l}$ it is either 0 or $\left(l\right)$.
Hence in our case $\phi^{-1}\left(m_{i}\right)=0$ so in particular
$\phi$ is an injection and $\phi\left(\mathbb{Z}_{l}\right)\cap m_{i}=0$.
Since $R/m_{i}$ is a field containing $\mathbb{Z}_{l}$ it is an
extension of $\mathbb{Q}_{l}$.

Now $R$ is a Noetherian $\mathbb{Z}_{l}$-module since it is finite
over the Noetherian ring $\mathbb{Z}_{l}$. Hence its quotient $R/m_{i}$
is a Noetherian $\mathbb{Z}_{l}$-module. But this implies the submodule
$\mathbb{Q}_{l}$ of $R/m_{i}$ is also Noetherian, a contradiction.
Thus we conclude $l\in m_{i}$.

Since $R$ is finite over $\mathbb{Z}_{l}$ this implies $\mathbb{F}_{i}=R/m_{i}$
is a finite extension of $\mathbb{F}_{l}=\mathbb{Z}_{l}/\left(l\right)$.
\end{proof}
\begin{lem}
\label{lem:R/m^e finite}For each $i$ the ring $R/m_{i}^{e}$ is
finite.
\end{lem}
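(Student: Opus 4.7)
The plan is to leverage the fact, established at the end of the previous lemma's proof, that $l\in m_{i}$ for every maximal ideal $m_{i}$ of $R$. From this, raising to the $e$-th power gives $l^{e}\in m_{i}^{e}$, so $l^{e}R\subseteq m_{i}^{e}$ and hence there is a natural surjection $R/l^{e}R\twoheadrightarrow R/m_{i}^{e}$. It therefore suffices to show that $R/l^{e}R$ is finite.

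For this, I would use the hypothesis that $R$ is a finite $\mathbb{Z}_{l}$-algebra, meaning $R$ is finitely generated as a $\mathbb{Z}_{l}$-module. Choosing generators $r_{1},\ldots,r_{k}$ of $R$ over $\mathbb{Z}_{l}$, their images generate $R/l^{e}R$ as a module over $\mathbb{Z}_{l}/l^{e}\mathbb{Z}_{l}$. The latter ring has cardinality $l^{e}$, so $R/l^{e}R$ is a finitely generated module over a finite ring and therefore finite (its cardinality is at most $l^{ek}$). Consequently its quotient $R/m_{i}^{e}$ is finite, which is the desired conclusion.

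There is essentially no obstacle here: the only nontrivial input is the containment $l\in m_{i}$, and that was already handled in the preceding lemma. The argument is a clean two-step reduction — from $m_{i}^{e}$ down to $l^{e}R$, and then from $R/l^{e}R$ down to finiteness via the finite-generation hypothesis.
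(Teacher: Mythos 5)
Your proof is correct but takes a genuinely different route from the paper's. The paper filters $R/m_{i}^{e}$ by powers of $m_{i}$: it observes that each graded piece $m_{i}^{j}/m_{i}^{j+1}$ is a finitely generated (hence finite) $\mathbb{F}_{i}$-vector space by Noetherianity, and concludes by induction on the short exact sequences $0\to m_{i}^{j}/m_{i}^{j+1}\to R/m_{i}^{j+1}\to R/m_{i}^{j}\to 0$. You instead go through the uniformizer: from $l\in m_{i}$ (established in the prior lemma) you get $l^{e}R\subseteq m_{i}^{e}$, hence a surjection $R/l^{e}R\twoheadrightarrow R/m_{i}^{e}$, and then finite generation of $R$ over $\mathbb{Z}_{l}$ makes $R/l^{e}R$ a finitely generated module over the finite ring $\mathbb{Z}/l^{e}\mathbb{Z}$. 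Your version is a bit more direct: it needs only the containment $l\in m_{i}$ and finite generation over $\mathbb{Z}_{l}$, and bypasses both the associated-graded filtration and the explicit appeal to Noetherianity; the paper's version, on the other hand, keeps the argument internal to the $m_{i}$-adic filtration and foregrounds the role of the residue field $\mathbb{F}_{i}$, which is in the spirit of the surrounding lemmas. Both are sound.
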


\begin{proof}
The field $\mathbb{F}_{i}=R/m_{i}$ is finite since it is a finite
extension of $\mathbb{F}_{l}$. Note $m_{i}^{j}/m_{i}^{j+1}\cong m_{i}^{j}\otimes_{R}R/m_{i}$
and this is a finite dimensional $\mathbb{F}_{i}$-vector space since
$R$ is a Noetherian ring. Then by the exact sequence 
\[
0\longrightarrow m_{i}^{j}/m_{i}^{j+1}\longrightarrow R/m_{i}^{j+1}\longrightarrow R/m_{i}^{j}\longrightarrow0
\]
and induction we get the result.
\end{proof}
For any $X\in S_{R}$ let $r_{i}\left(X\right)=\dim_{\mathbb{F}_{i}}X/m_{i}X$
and let $r\left(X\right)=\sum_{i=1}^{s}r_{i}\left(X\right)$.
\begin{lem}
\label{lem:rank grows with size}Fix $e\in\mathbb{Z}$. For any $r>0$
there exists $n>0$ such if $X\in S_{e}$ and $\left|X\right|>n$
then $r\left(X\right)>r$ .
\end{lem}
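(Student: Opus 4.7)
The plan is to reduce to modules over a fixed finite ring. First, observe that any $X \in S_e$ is annihilated by the ideal $I = \prod_i m_i^e$, since $m_i^e X = 0$ for each $i$ by definition of $S_e$. Because distinct maximal ideals are coprime, so are their powers $m_i^e$, and the Chinese Remainder Theorem gives $R/I \cong \prod_i R/m_i^e$. Each factor $R/m_i^e$ is finite by Lemma \ref{lem:R/m^e finite}, so $R/I$ is a finite ring whose cardinality depends only on $e$ (and $R$).

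Next I would use Nakayama to bound $|X|$ in terms of $r(X)$. Via the idempotent decomposition of the product ring $R/I$, every finite $R/I$-module splits as $X = \bigoplus_{i=1}^{s} X_i$, where $X_i$ is an $R/m_i^e$-module and the projection $X \to X/m_i X$ factors through $X \to X_i \to X_i/m_i X_i$, giving an isomorphism $X_i/m_i X_i \cong X/m_i X$. In particular, the minimal number of generators of $X_i$ over the local ring $R/m_i^e$ equals $r_i(X)$, by Nakayama applied in the finite commutative local ring $R/m_i^e$ (whose Jacobson radical is $m_i/m_i^e$). Hence $|X_i| \leq |R/m_i^e|^{r_i(X)}$, and taking the product over $i$,
\[
|X| = \prod_i |X_i| \;\leq\; \prod_i |R/m_i^e|^{r_i(X)} \;\leq\; |R/I|^{r(X)}.
\]

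Taking the contrapositive, if $r(X) \leq r$ then $|X| \leq |R/I|^r$, so the conclusion of the lemma follows with $n := |R/I|^r$. There is no real obstacle here; the only step requiring care is checking that the module decomposition $X = \bigoplus X_i$ really aligns with the invariants $r_i(X)$, but this is immediate from the fact that distinct maximal factors kill each other's summands. Everything else is a direct application of CRT, Nakayama, and Lemma \ref{lem:R/m^e finite}.
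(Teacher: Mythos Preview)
Your argument is correct and follows the same approach as the paper: view $X$ as a module over the finite ring $\prod_i R/m_i^e$ and use Nakayama's lemma to bound $|X|$ in terms of the ranks $r_i(X)$. Your multiplicative bound $|X|\le |R/I|^{r(X)}$ is in fact cleaner than the additive bound written in the paper (which, as stated, is too small to be valid in general, though the conclusion there is of course still correct).
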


\begin{proof}
Since $X\in S_{e}$ we can view it as a $\prod_{i=1}^{s}R/m_{i}^{e}$-module.
Hence by Nakayama's lemma and Lemma \ref{lem:R/m^e finite} for any
$X\in S_{e}$ we have 
\begin{align*}
\left|X\right| & \le\sum_{i=1}^{s}r_{i}\left(X\right)\left|R/m_{i}^{e}\right|\\
 & \le r\left(X\right)\cdot\max_{i}\left|R/m_{i}^{e}\right|.
\end{align*}
.
\end{proof}

\subsection{\label{subsec:Moments-and-limits}Moments and limits of measures
on $R$-modules}

For any $A\in S_{R}$ let $f_{A}\left(X\right)=\left|\mathrm{Surj}_{R}\left(X,A\right)\right|$
for all $X\in S_{R}$.

We first restrict to the set $S_{e}$ and prove the following.
\begin{prop}
\label{prop:surj bound}For any $\epsilon>0$ and $A\in S_{e}$ there
exists $A'\in S_{e}$ and $c>0$ such that 
\[
f_{A}\left(X\right)\le\epsilon f_{A'}\left(X\right)
\]
 for all $X\in S_{e}$ with $\left|X\right|>c$.
\end{prop}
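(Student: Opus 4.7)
The plan is to first prove the bound in the case where $R$ is local, where a natural choice of $A' = A \oplus \mathbb{F}$ admits a clean counting identity, and then to reduce the general case via the product decomposition $R/J^e \cong \prod_{i=1}^s R/m_i^e$, which splits modules in $S_e$ as $X = \bigoplus_i X^{(i)}$ compatibly with $\mathrm{Hom}$ and $\mathrm{Surj}$.

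Suppose first that $R$ is local with residue field $\mathbb{F}$ of size $q$, and write $r(M) = \dim_{\mathbb{F}} M/mM$. Take $A' = A \oplus \mathbb{F}$. For each surjection $\phi \colon X \twoheadrightarrow A$ with kernel $K_\phi$, the maps $\psi \colon X \to \mathbb{F}$ such that $(\phi,\psi) \colon X \to A \oplus \mathbb{F}$ is surjective are exactly those whose restriction $\psi|_{K_\phi}$ is nonzero (since $\mathbb{F}$ is simple, a nonzero map to $\mathbb{F}$ is automatically surjective). The left-exact sequence
\[
0 \longrightarrow \mathrm{Hom}_R(A,\mathbb{F}) \longrightarrow \mathrm{Hom}_R(X,\mathbb{F}) \longrightarrow \mathrm{Hom}_R(K_\phi,\mathbb{F})
\]
(with restriction on the right) shows that the image of restriction has size $q^{r(X)-r(A)}$, and each of its $q^{r(X)-r(A)}-1$ nonzero elements has exactly $q^{r(A)}$ preimages in $\mathrm{Hom}_R(X,\mathbb{F})$. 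Summing over $\phi$ gives
\[
f_{A \oplus \mathbb{F}}(X) \;=\; f_A(X)\cdot\bigl(q^{r(X)}-q^{r(A)}\bigr)
\]
when $r(X) > r(A)$ (both sides vanish otherwise). By Lemma~\ref{lem:rank grows with size}, taking $c$ large enough forces $r(X) > r(A) + \log_q(1+1/\epsilon)$ for every $X \in S_e$ with $|X| > c$, whence $q^{r(X)}-q^{r(A)} > 1/\epsilon$ and the bound follows.

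For general $R$, since $R$-linear maps preserve the idempotent decomposition, $f_A(X) = \prod_i f_{A^{(i)}}^{(i)}(X^{(i)})$ where $f^{(i)}$ denotes the surjection count over $R/m_i^e$. Applying the local case in each component with tolerance $\epsilon^{1/s}$ produces $A'^{(i)} = A^{(i)} \oplus \mathbb{F}_i$ and thresholds $c_i$; one then sets $A' = \bigoplus_i A'^{(i)} = A \oplus (R/J)$. The main obstacle is that $|X|$ large does not force each $|X^{(i)}|$ large individually, so the exponential gain from the local formula only activates in components where $X^{(i)}$ itself grows; handling the mixed profile requires Lemma~\ref{lem:rank grows with size} to identify the component(s) responsible for $|X|$ being large, together with careful bookkeeping of the bounded ratios in the remaining components so that their finite contribution can be absorbed into the constant $c$.
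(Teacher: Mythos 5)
Your local computation is correct, and the exact identity $f_{A\oplus\mathbb{F}}(X)=f_A(X)\bigl(q^{r(X)}-q^{r(A)}\bigr)$ is cleaner than the paper's one-sided counting argument.

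The reduction to the general case, however, is not a matter of ``careful bookkeeping'': the obstacle you flag is fatal for $A'=A\oplus R/J(R)$. Fix any $i$ and consider $X\in S_e$ with $X^{(i)}\cong A^{(i)}$ held fixed while the other components grow, so that $|X|\to\infty$. Your own local identity, applied in the $i$-th component where $r(X^{(i)})=r(A^{(i)})$, gives $f_{A^{(i)}\oplus\mathbb{F}_i}(X^{(i)})=0$, hence $f_{A'}(X)=0$ while $f_A(X)>0$; the ratio you hope to absorb into $c$ is infinite, not a bounded contribution. The paper's own proof has the same defect: the step asserting that $r(\ker f)>N_0$ forces $f_{R/J(R)}(\ker f\otimes_R R/J(R))>N_1$ is false, because $r(\ker f)=\sum_i r_i(\ker f)$ can be large with some individual $r_i(\ker f)=0$, in which case $f_{R/J(R)}(\ker f\otimes_R R/J(R))=0$. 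In fact the Proposition is false as stated once $s\ge 2$: for $A=0$ and $\epsilon<1$, testing $X$ concentrated at a single $m_i$ forces any candidate $A'$ to satisfy $A'^{(i)}=0$, so ranging over $i$ forces $A'=0$, and the claimed inequality degenerates to $1\le\epsilon$. What does hold, and is all that Lemma~\ref{lem:bounded integrals} and Theorem~\ref{thm:conv of moments gives measure} actually need, is the finite-sum variant: with $A'_i=A\oplus\mathbb{F}_i$, one has $f_A(X)\le\epsilon\sum_{i=1}^{s}f_{A'_i}(X)$ for $|X|>c$, since by pigeonhole some $|X^{(i_0)}|>c^{1/s}$, whence (if $f_A(X)>0$) the $i_0$-th component has large rank and your local identity supplies the needed gain there.
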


\begin{proof}
Let $A'=A\oplus R/J\left(R\right)$. Recall $R/J\left(R\right)\cong\prod_{i=1}^{s}R/m_{i}$.
For each $f\in\mathrm{Surj}_{R}\left(X,A\right)$ we will construct
a set of surjections from $X$ to $R/J\left(R\right)$ whose size
grows with the rank of $\ker f$.

By Lemma \ref{lem:rank grows with size} for any $N_{0}>0$ we can
take $c$ large enough such that $r\left(\ker f\right)>N_{0}$ for
all $f\in\mathrm{Surj}_{R}\left(X,A\right)$ and all $\left|X\right|>c$.

Fix $X\in S_{e}$ with $\left|X\right|>c$ such that $f_{A}\left(X\right)>0$.
Fix $f\in\mathrm{Surj}_{R}\left(X,A\right)$. The set $\mathrm{Surj}_{R}\left(\ker f,R/J\left(R\right)\right)$
is in bijection with 
\[
\mathrm{Surj}_{R}\left(\ker f\otimes_{R}R/J\left(R\right),R/J\left(R\right)\right).
\]
For any $N_{1}>0$ there exists $N_{0}$ such that if $r\left(\ker f\right)>N_{0}$
then $f_{R/J\left(R\right)}\left(\ker f\otimes_{R}R/J\left(R\right)\right)>N_{1}$
since we are reduced to considering maps of vector spaces. We have
shown that for any $N_{1}>0$ there exists $c>0$ such that for all
$\left|X\right|>c$ and all $f\in\mathrm{Surj}_{R}\left(X,A\right)$
we have $f_{R/J\left(R\right)}\left(\ker f\otimes_{R}R/J\left(R\right)\right)>N_{1}$.

Let $\pi_{X,f}\in\mathrm{Surj}\left(X,\ker f/R\left(J\right)\right)$
be the composition of projections
\[
X\longrightarrow X/R\left(J\right)\cong V\oplus\ker f/R\left(J\right)\longrightarrow\ker f/R\left(J\right).
\]
 The second isomorphism follows since $\ker f/R\left(J\right)\cong\prod_{i=1}^{s}\ker f/m_{i}$
and $\ker f/m_{i}$ is a subspace of $X/m_{i}X$ hence a direct summand.
Given any $g'\in\mathrm{Surj}_{R}\left(\ker f/R\left(J\right),R/J\left(R\right)\right)$
we get $g\in\mathrm{Surj}_{R}\left(X,R/m\right)$ by letting $g=g'\circ\pi_{X,f}$.
Letting 
\[
G\left(X,f\right)=\left\{ g=g'\circ\pi_{X,f}\mid g'\in\mathrm{Surj}_{R}\left(\ker f/R\left(J\right),R/J\left(R\right)\right)\right\} \subset\mathrm{Surj}_{R}\left(X,R/J\left(R\right)\right)
\]
 we have $\left|G\left(X,f\right)\right|>N_{1}$. Thus for any $N_{1}>0$
there exists $c$ such that if $\left|X\right|>c$ then $\left|G\left(X,f\right)\right|>N_{1}$
for all $f\in\mathrm{Surj}_{R}\left(X,A\right)$.

We claim that 
\[
T\left(X\right)=\left\{ \left(f,g\right)\mid f\in\mathrm{Surj}_{R}\left(X,A\right),g\in G\left(X,f\right)\right\} \subseteq\mathrm{Surj}_{R}\left(X,A'\right)
\]
 for all $X\in S_{e}$. Note for any $\left(f,g\right)\in T\left(X\right)$
we have that $g\mid_{\ker f}$ is already a surjection to $R/J\left(R\right)$.
If $\left(x,a\right)\in A'=A\oplus R/J\left(R\right)$ then pick $y_{1}\in X$
such that $f\left(y_{1}\right)=x$ and $y_{2}\in\ker f$ such that
$g\left(y_{2}\right)=a-g\left(y_{1}\right)$. Then $\left(f,g\right)\left(y_{1}+y_{2}\right)=\left(x,a\right)$.

We conclude that for any $N_{1}>0$ we can take $c$ large enough
such that $\left|T\left(X\right)\right|>N_{1}\cdot f_{A}\left(X\right)$
for all $\left|X\right|>c$. This completes the proof.
\end{proof}
For the next lemma make the notation $Y_{c}=\left\{ X\in S_{R}\mid\left|X\right|>c\right\} $.
\begin{lem}
\label{lem:bounded integrals}Let $e\in\mathbb{Z}$. For any $\epsilon>0$
and $A\in S_{e}$ there exists $c>0$ and $A'\in S_{e}$ such that
\[
\int_{Y_{c}}f_{A}\left(X\right)d\nu<\epsilon\int_{Y_{c}}f_{A'}\left(X\right)d\nu
\]
 for all measures $\nu$ on $S_{R}$.
\end{lem}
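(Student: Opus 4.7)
My plan is to deduce the integrated bound of the lemma directly from the pointwise inequality of Proposition \ref{prop:surj bound} by integration.

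First, I would apply Proposition \ref{prop:surj bound} with $\epsilon/2$ in place of $\epsilon$ (to leave room for the strict inequality) to obtain $A' \in S_e$ and $c > 0$ such that $f_A(X) \leq (\epsilon/2) f_{A'}(X)$ for every $X \in S_e$ with $|X| > c$. Since $A$ and $A'$ both lie in $S_e$, they are annihilated by $\prod_i m_i^e$, so any $R$-module homomorphism from a module $X \in S_R$ to either $A$ or $A'$ must factor through the quotient $X_e := X/\prod_i m_i^e X$, which itself lies in $S_e$. Consequently $f_A(X) = f_A(X_e)$ and $f_{A'}(X) = f_{A'}(X_e)$, and the pointwise inequality from Proposition \ref{prop:surj bound} transfers to every $X \in S_R$ for which $|X_e| > c$.

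With the pointwise bound secured on the relevant locus, the lemma follows by integrating both sides against $\nu$ restricted to $Y_c$; the strict inequality is then guaranteed by the factor-of-two slack (and is vacuous on the locus where both sides vanish). The only nontrivial technical point is the factorization $X \mapsto X_e$ that lifts Proposition \ref{prop:surj bound} from $S_e$ to $S_R$, exploiting the hypothesis $A \in S_e$; once this reduction is in place the passage from pointwise to integrated inequality is immediate.
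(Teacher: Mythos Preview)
Your approach is the paper's: use $\mathrm{Surj}_R(X,A)=\mathrm{Surj}_R(X_e,A)$ with $X_e=X\otimes_R\prod_i R/m_i^e$ to transport Proposition~\ref{prop:surj bound} from $S_e$ to all of $S_R$, and then integrate over $Y_c$. At the level of strategy you have matched the paper exactly.

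There is, however, a genuine gap in the last step, and you have in fact put your finger on it without resolving it. You correctly state that the pointwise bound from the proposition is available only for those $X$ with $|X_e|>c$, yet you then integrate over $Y_c=\{X:|X|>c\}$; these two conditions are not comparable. For instance, with $R=\mathbb{Z}_l$ the modules $X=\mathbb{Z}/l^N\mathbb{Z}$ have $|X|=l^N$ unbounded while $|X_e|\le l^e$ stays bounded, so they lie in every $Y_c$ but never satisfy $|X_e|>l^e$. Taking $A=0$ (so $f_A\equiv 1$) and $\nu$ a point mass at such an $X$, one finds $f_{A'}(X)=f_{A'}(\mathbb{Z}/l^e)\le l^{e-1}(l-1)$ for every $A'\in S_e$, and the desired inequality fails as soon as $\epsilon<1/(l^{e-1}(l-1))$. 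The paper's own proof makes exactly the same elision---it applies the proposition to $X\otimes\prod_i R/m_i^e$ without verifying the size hypothesis---so this is a defect you have inherited rather than introduced; your $\epsilon/2$ device tidies up the strict-versus-nonstrict issue but does not address this more substantive point.
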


\begin{proof}
For any $A\in S_{e}$ and $X\in S_{R}$ we have $\mathrm{Surj}_{R}\left(X,A\right)=\mathrm{Surj}_{R}\left(X\otimes\prod_{i=1}^{s}R/m_{i}^{e},A\right)$.
Hence by Proposition \ref{prop:surj bound} there exists $c>0$ and
$A'\in S_{e}$ such that

\begin{align*}
\int_{Y_{c}}f_{A}\left(X\right)d\nu & =\int_{Y_{c}}f_{A}\left(X\otimes\prod_{i=1}^{s}R/m_{i}^{e}\right)d\nu\\
 & <\epsilon\int_{Y_{c}}f_{A'}\left(X\otimes\prod_{i=1}^{s}R/m_{i}^{e}\right)d\nu\\
 & =\epsilon\int_{Y_{c}}f_{A'}\left(X\right)d\nu
\end{align*}
for any measure $\nu$.
\end{proof}
We now prove a ``convergence of moments'' implies ``convergence
of measures'' result. We make use of the above results to ensure
no escape of mass for the sequence of measures in question.
\begin{thm}
\label{thm:conv of moments gives measure}Let $\left\{ \nu_{n}\right\} $
be a sequence of probability measures on $S_{R}$ such that 
\[
\lim_{n\longrightarrow\infty}\int_{S_{R}}f_{A}\left(X\right)d\nu_{n}=1
\]
 for all $A\in S_{R}$. Then the sequence $\left\{ \nu_{n}\right\} $
converges weakly to a probability measure $\mu$ satisfying $\int_{S_{R}}f_{A}\left(X\right)d\mu=1$
for all $A\in S_{R}$.
\end{thm}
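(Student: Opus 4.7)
The plan is to establish weak convergence through the standard three steps: (i) tightness of $\{\nu_n\}$, (ii) extraction of a subsequential limit $\alpha$ that is a probability measure with $\int f_A \, d\alpha = 1$ for all $A$, and (iii) uniqueness of such a measure, so that every subsequential limit coincides and the full sequence converges weakly. Lemma \ref{lem:bounded integrals} is the engine for both (i) and (ii), uniformly bounding tail integrals $\int_{Y_c} f_A \, d\nu$ in terms of a bounded comparison moment $\int f_{A'} \, d\nu$.

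For tightness, the key observation is that by Nakayama's lemma every nonzero finite $R$-module $X$ satisfies $X/J(R)X \neq 0$ and hence surjects onto at least one residue field $\mathbb{F}_i = R/m_i$. Thus for any $c \geq 1$,
\[
\nu_n(Y_c) \;\leq\; \sum_{i=1}^{s} \nu_n\bigl(Y_c \cap \{X : f_{\mathbb{F}_i}(X) \geq 1\}\bigr) \;\leq\; \sum_{i=1}^{s} \int_{Y_c} f_{\mathbb{F}_i} \, d\nu_n.
\]
Applying Lemma \ref{lem:bounded integrals} with $e = 1$, $A = \mathbb{F}_i$ (which lies in $S_1$ since $J(R)\cdot R/m_i = 0$), and error $\epsilon/(2s)$ yields $A_i' \in S_1$ and $c_i$ such that $\int_{Y_{c_i}} f_{\mathbb{F}_i} \, d\nu_n < (\epsilon/(2s))\int f_{A_i'} \, d\nu_n$. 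Since $\int f_{A_i'} \, d\nu_n \to 1$ is uniformly bounded by $2$ for $n$ large, taking $c = \max_i c_i$ gives $\nu_n(Y_c) < \epsilon$, which is tightness.

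Tightness plus the countability of $S_R$ and a diagonal argument gives, from any subsequence, a further subsequence $\{\nu_{n_k}\}$ with $\nu_{n_k}(M) \to \alpha(M)$ for each $M$; tightness forces $\sum_M \alpha(M) = 1$, so $\alpha$ is a probability measure. To verify $\int f_A \, d\alpha = 1$, Fatou's lemma gives the $\leq$ direction. For the reverse, split $\int f_A \, d\nu_{n_k} = \int_{|X| \leq c} f_A \, d\nu_{n_k} + \int_{Y_c} f_A \, d\nu_{n_k}$; the finite truncation converges to $\int_{|X| \leq c} f_A \, d\alpha$ by pointwise convergence, while Lemma \ref{lem:bounded integrals} (with $e$ chosen so $A \in S_e$ and an $A'$ of bounded moment) makes the tail at most any prescribed $\epsilon$ for $c$ large. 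Passing to the limit gives $1 \leq \int f_A \, d\alpha + \epsilon$ for every $\epsilon$, hence $\int f_A \, d\alpha \geq 1$.

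The final and most delicate step is uniqueness of such a measure $\mu$. The matrix $(f_A(X))_{A,X \in S_R}$ is triangular when $S_R$ is ordered by size (since $f_A(X) = 0$ when $|A| > |X|$ and $f_X(X) = |\mathrm{Aut}(X)| > 0$), so there exist unique Mobius coefficients $c(M,A)$ with $\chi_M(X) = \sum_A c(M,A) f_A(X)$ pointwise, a finite sum for each $X$. Formally then $\mu(\{M\}) = \sum_A c(M,A) \int f_A \, d\mu = \sum_A c(M,A)$, depending only on the moments. The main obstacle is rigorously justifying this Fubini-type exchange, which reduces to showing $\sum_A |c(M,A)| < \infty$; I plan to obtain this by bounding $|c(M,A)|$ inductively in terms of $|\mathrm{Aut}(A)|^{-1}$ via the recursive inversion and then controlling the absolute sum against the uniform bound on $\int f_A \, d\mu$. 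Once uniqueness is established, every subsequential limit equals the same $\mu$, and therefore the whole sequence $\{\nu_n\}$ converges weakly to $\mu$.
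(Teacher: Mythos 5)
Your first three steps — tightness via Lemma \ref{lem:bounded integrals}, extraction of a subsequential limit $\alpha$ by a diagonal argument, and the two inequalities $\int f_A\,d\alpha \le 1$ (Fatou) and $\int f_A\,d\alpha \ge 1$ (truncation plus the tail bound) — are essentially the paper's argument, merely rearranged (the paper extracts a subsequence first and proves ``no mass escape'' a posteriori by evaluating the $A=R/R$ moment, rather than proving tightness up front). Your explicit tightness bound $\nu_n(Y_c) \le \sum_i \int_{Y_c} f_{\mathbb{F}_i}\,d\nu_n$ is a nice concretization of what the paper does implicitly.

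The genuine gap is your fourth step. You correctly observe that what you have so far is only that \emph{every} subsequential limit satisfies $\int f_A\,d\alpha = 1$, and that to upgrade this to weak convergence of the full sequence you need uniqueness of such a measure. But your proposed M\"obius-inversion argument is left as a plan, and the absolute summability $\sum_A |c(M,A)| < \infty$ that it hinges on is not true in the generality of this theorem's hypotheses: it is precisely what Lemma \ref{lem:mu_R unique} (i.e.\ Lemma 2.4 of \cite{lipnowskitsimerman}) establishes, and that lemma carries the extra hypothesis $c_R > 1/2$, which Theorem \ref{thm:conv of moments gives measure} does not assume. In fact the paper's own proof of this theorem also stops at the subsequential limit and never argues uniqueness; in the paper's applications (e.g.\ Theorem \ref{thm:cokernel density}) the uniqueness is supplied separately by citing Lemma \ref{lem:mu_R unique} under the assumption $\prod_i \eta(\mathbb{F}_i) > 1/2$. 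So to close your proof you should either import Lemma \ref{lem:mu_R unique} with its $c_R > 1/2$ hypothesis (which means adding that hypothesis to the theorem), or settle for the weaker conclusion that a subsequential weak limit with the stated moments exists, and leave identification of the full limit to the uniqueness lemma at the point of application, as the paper effectively does.
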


\begin{proof}
Note that by restricting to a subsequence we may assume the $\nu_{n}$
weakly converge to some measure $\nu_{\infty}$ (which may not be
a probability measure).

For any $c>0$
\begin{align*}
\int_{S_{R}}f_{A}\left(X\right)d\nu_{\infty} & =\int_{\left|X\right|<c}f_{A}\left(X\right)d\nu_{\infty}+\int_{\left|X\right|>c}f_{A}\left(X\right)d\nu_{\infty}\\
 & \ge\int_{\left|X\right|<c}f_{A}\left(X\right)d\nu_{\infty}.
\end{align*}
 This last integral is a finite sum, hence by weak convergence of
the $\nu_{n}$ we have 
\begin{align*}
\int_{\left|X\right|<c}f_{A}\left(X\right)d\nu_{\infty} & =\lim_{n\longrightarrow\infty}\int_{\left|X\right|<c}f_{A}\left(X\right)d\nu_{n}\\
 & =1-\lim_{n\longrightarrow\infty}\int_{\left|X\right|>c}f_{A}\left(X\right)d\nu_{n}.
\end{align*}
Thus for any $c>0$
\[
\int_{S_{R}}f_{A}\left(X\right)d\nu_{\infty}\ge1-\lim_{n\longrightarrow\infty}\int_{\left|X\right|>c}f_{A}\left(X\right)d\nu_{n}.
\]
Then by Lemma \ref{lem:bounded integrals} for any $\epsilon>0$ there
exist $c>0$ and $A'$ such that 
\begin{align*}
\lim_{n\longrightarrow\infty}\int_{\left|X\right|>c}f_{A}\left(X\right)d\nu_{n} & \le\epsilon\lim_{n\longrightarrow\infty}\int_{\left|X\right|>c}f_{A'}\left(X\right)d\nu_{n}\\
 & \le\epsilon.
\end{align*}
 Thus  $\int_{S_{R}}f_{A}\left(X\right)d\nu_{\infty}\ge1-\epsilon$
for any $\epsilon>0$. By Fatou's Lemma for measures 
\begin{align*}
\int_{S_{R}}f_{A}\left(X\right)d\nu_{\infty} & \le\lim_{n\longrightarrow\infty}\int_{S_{R}}f_{A}\left(X\right)d\nu_{n}\\
 & =1.
\end{align*}
 Thus $\int_{S_{R}}f_{A}\left(X\right)d\nu_{\infty}=1$ for all $A\in S_{R}$.
Setting $A=1$ shows $\nu_{\infty}$ is in fact a probability measure.
\end{proof}

\section{\label{sec:The-Cohen-Lenstra-measure}The Cohen-Lenstra measure on
$R$-modules}

We now define a particular measure on $R$-modules using the standard
method of taking cokernels of large random matrices. We prove various
properties of this measure, in particular that it is determined by
its moments. This generalizes \cite[Section 2]{lipnowskitsimerman}
to rings $R$ which are finite $\mathbb{Z}_{l}$-algebras. Some of
the arguments are similar but we reproduce them here for completeness
and since additional work is required to deal with the more general
rings in our case.

Let $R$ and $S_{R}$ be as defined above. Let $\mu_{\mathrm{haar},n}$
be the Haar measure induced on $\mathrm{End}_{R}\left(R^{n}\right)$
(since $R$ is finite over $\mathbb{Z}_{l}$) and let $\phi_{n}:\mathrm{End}_{R}\left(R^{n}\right)\longrightarrow S_{R}$
be defined by $\phi_{n}\left(f\right)=\mathrm{coker}f$. For each
$n$ we have a probability measure on $S_{R}$ given by $\mu_{n}=\left(\phi_{n}\right)_{*}\mu_{\mathrm{haar},n}$.
Finally let $\mu_{R}=\lim_{n\longrightarrow\infty}\mu_{n}$ be the
weak limit of measures.

We start by showing that $\mu_{R}$ is indeed a probability measure.
\begin{lem}
\label{lem:measure moments are 1}The measure $\mu_{R}$ is a probability
measure on $S_{R}$ and $\int_{S_{R}}f_{A}\left(X\right)d\mu_{R}=1$
for all $A\in S_{R}$.
\end{lem}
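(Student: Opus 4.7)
The plan is to verify the hypothesis of Theorem~\ref{thm:conv of moments gives measure} for the sequence $\{\mu_n\}$. Once I establish that $\lim_{n\to\infty}\int_{S_R}f_A(X)\,d\mu_n(X)=1$ for every $A\in S_R$, that theorem will immediately provide a weak limit which is a probability measure and which satisfies the desired moment identity. Since $\mu_R$ is defined as the weak limit of the $\mu_n$, this will give both conclusions of the lemma simultaneously.

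For the moment computation I would use a standard Fubini argument. By the definition $\mu_n=(\phi_n)_*\mu_{\mathrm{haar},n}$,
\[
\int_{S_R}f_A(X)\,d\mu_n(X)=\int_{\mathrm{End}_R(R^n)}|\mathrm{Surj}_R(\mathrm{coker}(f),A)|\,d\mu_{\mathrm{haar},n}(f).
\]
The key observation is that surjections $\mathrm{coker}(f)\twoheadrightarrow A$ correspond bijectively to surjections $\phi:R^n\twoheadrightarrow A$ with $\phi\circ f=0$. Swapping the sum over $\phi$ with the integral and identifying $\mathrm{End}_R(R^n)$ with matrices via the standard basis, for each fixed surjective $\phi$ the condition $\phi\circ f=0$ says that each of the $n$ columns of $f$ lies in $\ker\phi\subset R^n$. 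Since $R^n/\ker\phi\cong A$, the subgroup $\ker\phi$ has Haar measure $|A|^{-1}$ in $R^n$, and under $\mu_{\mathrm{haar},n}$ the $n$ columns are independent uniform elements of $R^n$; so the probability that $\phi\circ f=0$ is exactly $|A|^{-n}$. Together with the identity $|\mathrm{Hom}_R(R^n,A)|=|A|^n$ this yields
\[
\int_{S_R}f_A(X)\,d\mu_n(X)=\frac{|\mathrm{Surj}_R(R^n,A)|}{|A|^n}=\mathrm{Prob}\bigl(\phi\in\mathrm{Hom}_R(R^n,A)\text{ is surjective}\bigr).
\]

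To show this probability tends to $1$, I would invoke Nakayama's lemma: $\phi:R^n\to A$ is surjective iff each reduction $\bar\phi_i:\mathbb{F}_i^n\to A/m_iA$ is surjective for $i=1,\ldots,s$. Each $\bar\phi_i$ is a uniform $\mathbb{F}_i$-linear map, and an elementary count gives its surjectivity probability as $\prod_{k=0}^{d_i-1}(1-|\mathbb{F}_i|^{k-n})$ with $d_i=\dim_{\mathbb{F}_i}A/m_iA$, which tends to $1$ as $n\to\infty$. Taking the product over the finitely many $i$ shows $\int f_A\,d\mu_n\to 1$, and Theorem~\ref{thm:conv of moments gives measure} concludes the proof. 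There is no serious obstacle: the only potentially delicate point---escape of mass when passing to the weak limit on the discrete but infinite set $S_R$---is precisely what Theorem~\ref{thm:conv of moments gives measure} absorbs, so nothing is required beyond the Fubini computation and the finite-field surjectivity estimate.
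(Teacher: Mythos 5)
Your proof follows the paper's argument essentially verbatim: the same Fubini swap expressing $\int f_A\,d\mu_n$ as $|\mathrm{Surj}_R(R^n,A)|\cdot|A|^{-n}$ via counting $(\varphi,\psi)$ pairs, the same observation that this ratio tends to $1$, and the same appeal to Theorem~\ref{thm:conv of moments gives measure} to finish. The only difference is that you spell out the $|\mathrm{Surj}_R(R^n,A)|/|A|^n\to 1$ step with Nakayama and an explicit finite-field surjectivity count, where the paper states it tersely (and a bit loosely as an equality of limits); your version is correct and fills in that detail.
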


\begin{proof}
We claim $\lim_{n\longrightarrow\infty}\int_{S_{R}}f_{A}\left(X\right)d\mu_{n}=1$
for all $A\in S_{R}$. For any $n$ by definition of $\mu_{n}$ as
the push-forward we have 
\begin{align*}
\int_{S_{R}}f_{A}\left(X\right)d\mu_{n} & =\int_{\varphi\in\mathrm{End}_{R}\left(R^{n}\right)}f_{A}\left(\mathrm{coker}\varphi\right)d\mu_{\mathrm{haar},n}\\
 & =\sum_{\psi\in\mathrm{Surj}\left(R^{n},A\right)}\mu_{\mathrm{haar},n}\left(\left\{ \varphi\in\mathrm{End}_{R}\left(R^{n}\right)\mid\mathrm{im}\varphi\subseteq\ker\psi\right\} \right)\\
 & =\left|\mathrm{Surj}\left(R^{n},A\right)\right|\cdot\left|A\right|^{-n}
\end{align*}
 where we are counting surjections to $A$ in two different ways.
The last equality follows since $\ker\psi$ has index $\left|A\right|$
in $R^{n}$. Then $\lim_{n\longrightarrow\infty}\left|\mathrm{Surj}\left(R^{n},A\right)\right|=\left|\mathrm{Hom}\left(R^{n},A\right)\right|=\left|A\right|^{n}$.
This proves the claim. Hence the result follows by Theorem \ref{thm:conv of moments gives measure}.
\end{proof}
Now we characterize the support of $\mu_{R}$ under some additional
assumptions on $R$. We will see the support of $\mu_{R}$ is a strict
subset of $S_{R}$.

Let $T_{R}$ be the union of $\mathrm{im}\phi_{n}$ for all $n$.

Recall we let $\left\{ m_{1},\ldots,m_{s}\right\} $ be the maximal
ideals of $R$ and $\mathbb{F}_{i}=R/m_{i}$. For any $M\in S_{R}$
define $d_{m_{i}}\left(M\right)=\dim_{\mathbb{F}_{i}}M\otimes_{R}\mathbb{F}_{i}-\dim_{\mathbb{F}_{i}}\mathrm{Tor}_{R}^{1}\left(M,\mathbb{F}_{i}\right)$.
Let $d\left(M\right)=\sum_{i=1}^{s}d_{m_{i}}\left(M\right)$.
\begin{lem}
\label{lem:characterize T_R}Let $M\in S_{R}$. Then $M\in T_{R}$
if and only if $d_{i}\left(M\right)\ge0$ for all $i$.
\end{lem}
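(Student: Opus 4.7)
The plan is to extract the identity $r_i(K) = n - d_{m_i}(M)$ from a Tor long exact sequence applied to a presentation of $M$, and then deduce both directions from it. Fix $n\ge 1$ and any surjection $R^n \twoheadrightarrow M$ with kernel $K$; since $R$ is Noetherian, $K$ is finitely generated. The short exact sequence $0\to K\to R^n\to M\to 0$, tensored with $\mathbb{F}_i$ over $R$ and combined with $\mathrm{Tor}_R^1(R^n,\mathbb{F}_i)=0$ (freeness of $R^n$), yields the four-term exact sequence
\[
0\to \mathrm{Tor}_R^1(M,\mathbb{F}_i)\to K\otimes_R \mathbb{F}_i\to \mathbb{F}_i^n\to M\otimes_R\mathbb{F}_i\to 0.
\]
Counting $\mathbb{F}_i$-dimensions gives $r_i(K)=\dim_{\mathbb{F}_i}(K\otimes_R\mathbb{F}_i)=n-d_{m_i}(M)$, which is the key identity driving the whole argument.

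The forward direction is then immediate: if $M=\mathrm{coker}(f)$ for some $f\in\mathrm{End}_R(R^n)$, then $K=\mathrm{im}(f)$ is a quotient of $R^n$, so $K\otimes_R\mathbb{F}_i$ is a quotient of $\mathbb{F}_i^n$; hence $r_i(K)\le n$, and the identity forces $d_{m_i}(M)\ge 0$ for every $i$. For the reverse direction, assume $d_{m_i}(M)\ge 0$ for all $i$, pick $n$ so that $M$ admits a surjection $R^n\twoheadrightarrow M$ (possible since $M\in S_R$ is finitely generated), and let $K$ be the kernel. Then $r_i(K)=n-d_{m_i}(M)\le n$ for every $i$. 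If $K$ can be generated by $n$ elements, then any such generating set supplies a surjection $R^n\twoheadrightarrow K$; composing with $K\hookrightarrow R^n$ produces an endomorphism $f\in\mathrm{End}_R(R^n)$ with $\mathrm{coker}(f)=M$, placing $M$ in $T_R$.

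What remains is a semilocal generator count: I claim that for any finitely generated $R$-module $N$, the minimal number of generators equals $\max_i r_i(N)$. The argument uses $R/J(R)\cong \prod_{i=1}^s \mathbb{F}_i$: the module $N/J(R)N$ decomposes via the idempotents of $R/J(R)$ as $\bigoplus_i N/m_iN$, and a single element $(v_1,\dots,v_s)$ can be scaled independently in each coordinate by an idempotent, so $\max_i\dim_{\mathbb{F}_i}N/m_iN$ elements suffice (and are obviously necessary) to generate $N/J(R)N$ over $R/J(R)$; Nakayama's lemma, which applies since $N$ is finitely generated and $J(R)$ is the Jacobson radical, lifts this to a generating set of $N$ of the same size. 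Applying this to $K$ gives a generating set of size $\max_i r_i(K)\le n$ and closes the argument. The main potential obstacle is exactly this semilocal Nakayama step: the fact that $\max_i$ rather than $\sum_i$ appears is essential and comes directly from the product structure of $R/J(R)$, whereas the rest of the proof is a direct computation with a Tor sequence.
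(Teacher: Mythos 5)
Your proof is correct and takes essentially the same route as the paper: derive $r_i(K)=n-d_{m_i}(M)$ from the Tor long exact sequence applied to a presentation $0\to K\to R^n\to M\to 0$, then for the converse use the product structure of $R/J(R)\cong\prod_i\mathbb{F}_i$ together with Nakayama to conclude that $K$ needs only $\max_i r_i(K)\le n$ generators. The only cosmetic difference is that the paper localizes at each $m_i$ before tensoring while you work globally (equivalent here), and you spell out the semilocal generator count slightly more explicitly than the paper's one-line appeal to Nakayama.
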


\begin{proof}
Since $M$ is finite there is an exact sequence
\begin{equation}
0\longrightarrow K\longrightarrow R^{n}\longrightarrow M\longrightarrow0\label{eq:T_R exact}
\end{equation}
 which we can localize at any maximal ideal $m$ of $R$. Let $\mathbb{F}=R/m$.
Note $\dim_{\mathbb{F}}\mathrm{Tor}_{R_{m}}^{1}\left(R_{m}^{n},\mathbb{F}\right)$
$=0$. Hence tensoring with $\mathbb{\mathbb{F}}\cong R_{m}/mR_{m}$
we see that 
\begin{equation}
\dim_{\mathbb{F}}M_{m}\otimes_{R_{m}}\mathbb{F}-\dim_{\mathbb{F}}\mathrm{Tor}_{R_{m}}^{1}\left(M_{m},\mathbb{F}\right)=n-\dim_{\mathbb{F}}K_{m}\otimes_{R_{m}}\mathbb{F}.\label{eq:T_R equation}
\end{equation}

Suppose $M\in T_{R}$. Then there is an exact sequence 
\[
0\longrightarrow N\longrightarrow R^{n}\longrightarrow R^{n}\longrightarrow M\longrightarrow0.
\]
 Let $K=R^{n}/N$. Then $\dim_{\mathbb{F}}R^{n}/N\otimes_{R}\mathbb{F}\le n$
so it follows from (\ref{eq:T_R equation}) that $d_{m_{i}}\left(M\right)\ge0$
for all $i$.

Conversely suppose $d_{i}\left(M\right)\ge0$. Let $n$ and $K$ be
as in (\ref{eq:T_R exact}). Then $\dim_{\mathbb{F}_{i}}K\otimes_{R}\mathbb{F}_{i}\le n$
for all $i$. Thus $K/J\left(K\right)\cong\prod_{i=1}^{s}\mathbb{F}_{i}^{n_{i}}$
for $n_{i}\le n$ and consequently by Nakayama's lemma $K$ is generated
by at most $n$ elements over $R$. Hence there is a surjective map
$R^{n}\longrightarrow K$ so $K\cong R^{n}/N$ for some submodule
$N$. Thus $M$ is the cokernel of a map $R^{n}\longrightarrow R^{n}$
so $M\in T_{R}$. This completes the proof.

\end{proof}
We now give a specific formula for $\mu_{R}$ analogous to the classic
Cohen-Lenstra measure. Let $T_{R}\subset S_{R}$ be the set of modules
in the image of $\phi_{n}$ for at least one $n$. Recall $R/J\left(R\right)\cong\prod_{i=1}^{s}\mathbb{F}_{i}$.

For $\vec{j}\in\mathbb{Z}_{\ge0}^{s}$ define the subset $T_{R,\vec{j}}\subset T_{R}$
by 
\begin{equation}
T_{R,\vec{j}}=\left\{ M\in T_{R}\mid d_{m_{i}}\left(M\right)=j_{i}\right\} .\label{eq:partition TR}
\end{equation}
 Clearly we can partition $T_{R}$ into the disjoint union $T_{R}=\bigcup_{\vec{j}\in\mathbb{Z}^{s}}T_{R,\vec{j}}$
(though some of the $T_{R,\vec{j}}$ may be empty).
\begin{thm}
\label{thm:The-measure-}The measure $\mu_{R}$ is supported on $T_{R}$
and for every $\vec{j}\in\mathbb{Z}_{\ge0}^{s}$ and $M\in T_{R,\vec{j}}$
we have the formula
\[
\mu_{R}\left(M\right)=\frac{c_{R,\vec{j}}}{\left|\mathrm{Aut}_{R}M\right|}
\]
 where 
\[
c_{R,\vec{j}}=\lim_{n\longrightarrow\infty}\prod_{i=1}^{s}\frac{\mathcal{N}_{i}\left(n,j_{i}\right)|\mathrm{GL}_{n-j_{i}}\left(\mathbb{F}_{i}\right)|}{|M_{n\times n-j_{i}}\left(\mathbb{F}_{i}\right)|}
\]
 and $\mathcal{N}_{i}\left(n,j_{i}\right)$ denotes the number of
subspaces of $\mathbb{F}_{i}^{n}$ of dimension $j_{i}$.
\end{thm}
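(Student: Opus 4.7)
The plan is to compute $\mu_n(\{M\})$ for each $M \in S_R$ directly and take $n \to \infty$, using that weak convergence on the discrete set $S_R$ amounts to pointwise convergence of point masses. The support claim is then immediate: each $\mu_n$ is a pushforward under $\phi_n$ and is supported on $\mathrm{im}\,\phi_n \subseteq T_R$, so $\mu_R(\{M\}) = \lim_n \mu_n(\{M\}) = 0$ for every $M \notin T_R$.

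Fix $M \in T_{R,\vec{j}}$ and parametrise $\{f \in \mathrm{End}_R(R^n) : \mathrm{coker}(f) \cong M\}$ by pairs $(\psi, f)$ with $\psi \in \mathrm{Surj}_R(R^n, M)$ and $\mathrm{im}(f) = \ker\psi$; each such $f$ appears exactly $|\mathrm{Aut}_R(M)|$ times, so
\[
\mu_n(\{M\}) \cdot |\mathrm{Aut}_R(M)| = \sum_{\psi \in \mathrm{Surj}_R(R^n, M)} \mu_{\mathrm{haar},n}\bigl(\{f : \mathrm{im}(f) = \ker\psi\}\bigr).
\]
For a fixed $\psi$ with $K = \ker\psi$, applying $\psi$ column-by-column shows $\mu_{\mathrm{haar},n}(\{f : \psi f = 0\}) = |M|^{-n}$ and that the conditional distribution of $f$ is the Haar measure on $K^n$. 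By Nakayama, $f : R^n \to K$ is surjective iff its reduction $\bar f : (R/J)^n \to K/JK$ is, and the Chinese remainder decompositions $R/J \cong \prod_i \mathbb{F}_i$ and $K/JK \cong \prod_i K/m_iK$ split this into $s$ conditions which become mutually independent once Haar on $K^n$ is pushed forward to uniform measure on the finite group $(K/JK)^n$.

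For each $i$, equation (\ref{eq:T_R equation}) applied to the exact sequence $0 \to K \to R^n \to M \to 0$ yields $\dim_{\mathbb{F}_i}(K/m_iK) = n - j_i$, so counting surjections as (subspace kernel) $\times$ (iso of quotient) identifies the probability that a uniform random map $\mathbb{F}_i^n \to K/m_iK$ be surjective as $\mathcal{N}_i(n, j_i)\,|\mathrm{GL}_{n-j_i}(\mathbb{F}_i)|/|\mathrm{Mat}_{n \times (n-j_i)}(\mathbb{F}_i)|$. Collecting everything gives
\[
\mu_n(\{M\}) = \frac{|\mathrm{Surj}_R(R^n, M)|}{|\mathrm{Aut}_R(M)| \cdot |M|^n} \prod_{i=1}^s \frac{\mathcal{N}_i(n, j_i)\,|\mathrm{GL}_{n-j_i}(\mathbb{F}_i)|}{|\mathrm{Mat}_{n \times (n-j_i)}(\mathbb{F}_i)|}.
\]
Letting $n \to \infty$, the first ratio tends to $1$ exactly as in Lemma \ref{lem:measure moments are 1}, while each factor in the product rewrites as $\prod_{k=j_i+1}^n (1 - |\mathbb{F}_i|^{-k})$ and converges to a positive limit, yielding the claimed formula with $c_{R,\vec{j}}$ as stated.

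The main delicate step I anticipate is the Haar-measure argument in the middle paragraph: specifically, that restriction of Haar on $\mathrm{End}_R(R^n)$ to the closed subgroup $\{f : \psi f = 0\}$ is Haar on $K^n$, and that it further pushes forward to the uniform measure on $(K/JK)^n$, so that the surjectivity events across the $s$ maximal ideals are exactly independent. Both assertions are soft consequences of compactness and translation invariance of Haar, but since $R$ is only a finite $\mathbb{Z}_l$-algebra rather than a finite ring these are precisely the places where the non-finiteness intervenes and warrant careful writing; everything else reduces to combinatorics in finite vector spaces.
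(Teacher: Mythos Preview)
Your proposal is correct and follows essentially the same double-counting argument as the paper: parametrise $\{f:\mathrm{coker}\,f\cong M\}$ by pairs $(\psi,f)$ with $\mathrm{im}\,f=\ker\psi$, use the free $\mathrm{Aut}_R(M)$-action, and reduce the computation of $\mu_{\mathrm{haar},n}(\{f:\mathrm{im}\,f=\ker\psi\})$ to counting surjections of vector spaces over each $\mathbb{F}_i$ via Nakayama.

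One point where your write-up is cleaner than the paper's: you invoke equation~(\ref{eq:T_R equation}) directly for every $\psi\in\mathrm{Surj}_R(R^n,M)$ to obtain $\dim_{\mathbb{F}_i}(\ker\psi/m_i\ker\psi)=n-j_i$, which yields an exact formula for $\mu_n(\{M\})$ valid for all $n$. The paper instead argues that for a proportion of $\psi$ tending to $1$ one has $\ker\psi\cong\ker g\oplus R^{n-m}$ for a fixed presentation $g$, and only then computes the dimension; this detour is unnecessary precisely because (\ref{eq:T_R equation}) already applies uniformly to every $\psi$. Your Haar-measure step (restriction to the closed subgroup $K^n$ and pushforward to the finite quotient $(K/JK)^n$) is standard and the paper handles it the same way.
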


\begin{rem}
\label{rem:C_r}Note that Theorem \ref{thm:The-measure-} shows that
the formula for $\mu_{R}$ is not always inversely proportional to
the size of the automorphism group as the constant $c_{R,\vec{j}}$
depends on the module $M$. See Lemma \ref{lem:cR formula} for an
explicit formula for $c_{R,\vec{j}}$.
\end{rem}

\begin{proof}
Clearly $\mu_{R}$ is only supported on $T_{R}$ since the $\mu_{n}$
are. Let $M\in T_{R}$.

Let 
\[
X=\left\{ \left(\varphi,\psi\right)\in\mathrm{End}_{R}\left(R^{n}\right)\times\mathrm{Surj}_{R}\left(R^{n},M\right)\mid\mathrm{im}\varphi=\ker\psi\right\} 
\]
and let $\mu$ denote the product measure on $\mathrm{End}_{R}\left(R^{n}\right)\times\mathrm{Surj}_{R}\left(R^{n},M\right)$
given by the product of $\mu_{\mathrm{haar},n}$ with the counting
measure. We will compute the measure of $X$ in two different ways.
Firstly by definition
\[
\mu_{\mathrm{haar},n}\left(\left\{ \varphi\in\mathrm{End}_{R}\left(R^{n}\right)\mid\mathrm{coker}\varphi\cong M\right\} \right)=\mu_{R,n}\left(M\right).
\]
For each $\varphi\in\mathrm{End}_{R}\left(R^{n}\right)$ the group
$\mathrm{Aut}\left(M\right)$ acts freely faithfully on the set 
\[
X_{\varphi}=\left\{ \psi\in\mathrm{Surj}_{R}\left(R^{n},M\right)\mid\left(\varphi,\psi\right)\in X\right\} .
\]
Thus $\mu\left(X\right)=\mu_{R,n}\left(M\right)\left|\mathrm{Aut}\left(M\right)\right|$.

We can also express $\mu\left(X\right)$ as

\begin{align*}
\mu\left(X\right) & =\sum_{\psi\in\mathrm{Surj}_{R}\left(R^{n},M\right)}\mu_{\mathrm{haar},n}\left(\left\{ \varphi\in\mathrm{End}_{R}\left(R^{n}\right)\mid\mathrm{im}\varphi=\ker\psi\right\} \right)\\
 & =\sum_{\psi\in\mathrm{Surj}_{R}\left(R^{n},M\right)}\mu_{\mathrm{haar},n}\left(\mathrm{Surj}_{R}\left(R^{n},\ker\psi\right)\right).
\end{align*}
Now fix any $\psi\in\mathrm{Surj}_{R}\left(R^{n},M\right)$.

For some large enough fixed $m$ there exists $f\in\mathrm{End}_{R}\left(R^{m}\right)$
such that $M\cong\mathrm{coker}f$. Let $g:R^{m}\longrightarrow R^{m}/\mathrm{im}f$
be the projection. Since $M\in T_{R,\vec{j}}$ we have $m-\dim_{\mathbb{F}_{i}}\ker g\otimes_{R}\mathbb{F}_{i}=j_{i}$
for all $i$.

Let $\left\{ e_{i}\right\} $ be the standard basis for $R^{n}$.
First suppose there is a subset $V\subset\left\{ e_{i}\right\} $
of size $m$ such that $\psi\mid_{\left\langle V\right\rangle }=g$.
Then we can find an invertible transformation of $R^{n}$ under which
$R^{n}\cong\left\langle V\right\rangle \oplus W$ with $W\cong R^{n-m}$
and $\psi\left(W\right)=0$. Thus in this case we have $\ker\psi\cong\ker g\oplus R^{n-m}$.
We can find $n$ large enough such that the proportion of $\psi\in\mathrm{Surj}_{R}\left(R^{n},M\right)$
for which this holds is arbitrarily close to 1.

Note 
\begin{align*}
\mu_{\mathrm{haar},n}\left(\mathrm{Hom}_{R}\left(R^{n},\ker\psi\right)\right) & =\mu_{\mathrm{haar},n}\left(\left\{ \varphi\in\mathrm{End}_{R}\left(R^{n}\right)\mid\mathrm{im}\varphi\subseteq\ker\psi\right\} \right)\\
 & =\left|M\right|^{-n}
\end{align*}
and $\left|\mathrm{Hom}_{R}\left(R^{n},M\right)\right|=\left|M\right|^{n}$.
Also $\lim_{n\longrightarrow\infty}\left|\mathrm{Surj}_{R}\left(R^{n},M\right)\right|/\left|\mathrm{Hom}_{R}\left(R^{n},M\right)\right|=1$.

Let $K=\ker g\oplus R^{n-m}$. Combining the above facts we see that
\begin{align*}
\lim_{n\longrightarrow\infty}\mu\left(X\right) & =\lim_{n\longrightarrow\infty}\left|M\right|^{n}\cdot\mu_{\mathrm{haar},n}\left(\mathrm{Surj}_{R}\left(R^{n},\ker g\oplus R^{n-m}\right)\right)\\
 & =\lim_{n\longrightarrow\infty}\mu_{\mathrm{haar},n}\left(\mathrm{Surj}_{R}\left(R^{n},K\right)\right)/\mu_{\mathrm{haar},n}\left(\mathrm{Hom}_{R}\left(R^{n},K\right)\right).
\end{align*}

Tensoring with $R/J\left(R\right)$ induces a map 
\[
\mathrm{Hom}_{R}\left(R^{n},K\right)\longrightarrow\mathrm{Hom}_{R/J\left(R\right)}\left(\left(R/J\left(R\right)\right)^{n},K\otimes R/J\left(R\right)\right)
\]
 which by Nakayama's lemma restricts to a map between the corresponding
sets of surjections. Since $\mu_{\mathrm{haar},n}$ pushes forward
to the uniform measure on $\mathrm{End}_{R/J\left(R\right)}\left(\left(R/J\left(R\right)\right)^{n}\right)$
we have 
\[
\mu_{\mathrm{haar},n}\left(\mathrm{Hom}_{R}\left(R^{n},K\right)\right)=\left|\mathrm{Hom}_{R/J\left(R\right)}\left(\left(R/J\left(R\right)\right)^{n},K\otimes R/J\left(R\right)\right)\right|
\]
 and similarly for the corresponding sets of surjections. 

Recall $K=\ker g\oplus R^{n-m}$ and $M\in T_{R,\vec{j}}$. Hence
$K\otimes R/J\left(R\right)\cong\prod_{i=1}^{s}\mathbb{F}_{i}^{m-j_{i}}\times\prod_{i=1}^{s}\mathbb{F}_{i}^{n-m}$.
We have shown that 
\[
K\otimes R/J\left(R\right)\cong\prod_{i=1}^{s}\mathbb{F}_{i}^{n-j_{i}}.
\]
 Hence we have 
\begin{align*}
\mathrm{Hom}_{R/J\left(R\right)}\left(\left(R/J\left(R\right)\right)^{n},K\otimes R/J\left(R\right)\right) & =\prod_{i=1}^{s}M_{n\times n-j_{i}}\left(\mathbb{F}_{i}\right)
\end{align*}
and 
\begin{align*}
\mathrm{Surj}_{R/J\left(R\right)}\left(\left(R/J\left(R\right)\right)^{n},K\otimes R/J\left(R\right)\right) & =\prod_{i=1}^{s}\mathrm{Surj}\left(\mathbb{F}_{i}^{n},\mathbb{F}_{i}^{n-j_{i}}\right).
\end{align*}

Note $\left|\mathrm{Surj}\left(\mathbb{F}_{i}^{n},\mathbb{F}_{i}^{n-j_{i}}\right)\right|=\mathcal{N}\left(n,j_{i}\right)\left|\mathrm{GL}_{n-j_{i}}\left(\mathbb{F}_{i}\right)\right|$
where $\mathcal{N}\left(n,j_{i}\right)$ is the number of subspaces
of $\mathbb{F}_{i}^{n}$ of dimension $j_{i}$. We conclude 
\[
\lim_{n\longrightarrow\infty}\mu\left(X\right)=\lim_{n\longrightarrow\infty}\prod_{i=1}^{s}\frac{\mathcal{N}_{i}\left(n,j_{i}\right)|\mathrm{GL}_{n-j_{i}}\left(\mathbb{F}_{i}\right)|}{|M_{n\times n-j_{i}}\left(\mathbb{F}_{i}\right)|}
\]
which completes the proof.
\end{proof}
The constant $c_{R,\vec{j}}$ defined in Theorem \ref{thm:The-measure-}
can be computed explicitly.
\begin{lem}
\label{lem:cR formula}Suppose $R/J\left(R\right)\cong\prod_{i=1}^{s}\mathbb{F}_{i}.$
For any $\vec{j}\in\mathbb{Z}_{\ge0}^{s}$ we have
\[
c_{R,\vec{j}}=\prod_{i=1}^{s}\eta\left(\mathbb{F}_{i}\right)\frac{\left|\mathbb{F}_{i}\right|^{j_{i}\left(j_{i}+1\right)/2}}{\prod_{k=1}^{j_{i}}\left(\left|\mathbb{F}_{i}\right|^{k}-1\right)}
\]
\end{lem}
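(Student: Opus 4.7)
The claim is a purely combinatorial/algebraic identity: since the expression for $c_{R,\vec{j}}$ in Theorem~\ref{thm:The-measure-} already factors as a product over the indices $i=1,\dots,s$, it is enough to prove, for each $i$ separately (writing $q = |\mathbb{F}_i|$ and $j = j_i$), the one-variable identity
\[
\lim_{n\to\infty}\frac{\mathcal{N}(n,j)\,|\mathrm{GL}_{n-j}(\mathbb{F}_q)|}{|M_{n\times(n-j)}(\mathbb{F}_q)|}
= \eta(\mathbb{F}_q)\,\frac{q^{j(j+1)/2}}{\prod_{k=1}^{j}(q^k-1)}.
\]
Taking the product over $i$ then yields the stated formula. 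So the plan is to expand each of the three factors on the left in closed form and carefully collect terms.

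First I would substitute the standard formulas
\[
|M_{n\times(n-j)}(\mathbb{F}_q)| = q^{n(n-j)}, \qquad |\mathrm{GL}_{n-j}(\mathbb{F}_q)| = q^{(n-j)^2}\prod_{k=1}^{n-j}\bigl(1-q^{-k}\bigr),
\]
and the Gaussian binomial expression for the number of $j$-dimensional subspaces of $\mathbb{F}_q^n$,
\[
\mathcal{N}(n,j) = \binom{n}{j}_q = \frac{\prod_{k=0}^{j-1}(q^{n-k}-1)}{\prod_{k=1}^{j}(q^k-1)} = \frac{q^{jn-j(j-1)/2}\prod_{k=n-j+1}^{n}(1-q^{-k})}{\prod_{k=1}^{j}(q^k-1)}.
\]

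Next I would multiply these out and track the power of $q$ separately from the products $(1-q^{-k})$. The exponent of $q$ is
\[
\bigl(jn - j(j-1)/2\bigr) + (n-j)^2 - n(n-j) = j(j+1)/2,
\]
which is independent of $n$ and produces exactly the $q^{j(j+1)/2}$ in the target. The $(1-q^{-k})$ factors combine as
\[
\prod_{k=n-j+1}^{n}(1-q^{-k})\cdot\prod_{k=1}^{n-j}(1-q^{-k}) = \prod_{k=1}^{n}(1-q^{-k}),
\]
which converges to $\eta(\mathbb{F}_q)$ as $n\to\infty$. Dividing by $\prod_{k=1}^{j}(q^k-1)$, which carries through unchanged from $\mathcal{N}(n,j)$, gives the desired limit.

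There is no real obstacle here; the only thing to be careful about is the bookkeeping of the $q$-exponents (and the telescoping of the two partial products of $(1-q^{-k})$ factors into a single product that converges absolutely to $\eta(\mathbb{F}_q)$, which uses $q \geq 2$). Taking the product over $i$ completes the proof.
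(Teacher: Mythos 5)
Your proposal is correct and follows essentially the same direct computation as the paper: substitute the closed forms for $\mathcal{N}(n,j)$, $|\mathrm{GL}_{n-j}|$, and $|M_{n\times(n-j)}|$, verify that the net power of $q$ is the $n$-independent quantity $j(j+1)/2$, and pass to the limit to produce $\eta(\mathbb{F}_i)$. The only cosmetic difference is the bookkeeping of the $(1-q^{-k})$ factors—the paper discards the ones from $\mathcal{N}(n,j)$ (since they tend to 1) and sources $\eta(\mathbb{F}_i)$ entirely from the ratio $|\mathrm{GL}_{n-j}|/|M_{n-j}|$, while you telescope them all into $\prod_{k=1}^{n}(1-q^{-k})$—but this does not change the argument.
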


\begin{proof}
Recall we let 
\begin{align*}
c_{R,\vec{j}} & =\lim_{n\longrightarrow\infty}\prod_{i=1}^{s}\frac{\mathcal{N}_{i}\left(n,j_{i}\right)|\mathrm{GL}_{n-j_{i}}\left(\mathbb{F}_{i}\right)|}{|M_{n\times n-j_{i}}\left(\mathbb{F}_{i}\right)|}\\
 & =\lim_{n\longrightarrow\infty}\prod_{i=1}^{s}\frac{\mathcal{N}_{i}\left(n,j_{i}\right)}{\left|\mathbb{F}_{i}\right|^{\left(n-j_{i}\right)j_{i}}}\frac{|\mathrm{GL}_{n-j_{i}}\left(\mathbb{F}_{i}\right)|}{|M_{n-j_{i}}\left(\mathbb{F}_{i}\right)|}.
\end{align*}
There is the well known formula 
\[
\mathcal{N}_{i}\left(n,j_{i}\right)=\frac{\prod_{k=n-j_{i}+1}^{n}\left(\left|\mathbb{F}_{i}\right|^{k}-1\right)}{\prod_{k=1}^{j_{i}}\left(\left|\mathbb{F}_{i}\right|^{k}-1\right)}.
\]
 Note the denominator does not depend on $n$. We have 
\begin{align*}
\lim_{n\longrightarrow\infty}\frac{\prod_{k=n-j_{i}+1}^{n}\left(\left|\mathbb{F}_{i}\right|^{k}-1\right)}{\left|\mathbb{F}_{i}\right|^{\left(n-j_{i}\right)j_{i}}} & =\lim_{n\longrightarrow\infty}\frac{\prod_{k=n-j_{i}+1}^{n}\left|\mathbb{F}_{i}\right|^{k}}{\left|\mathbb{F}_{i}\right|^{\left(n-j_{i}\right)j_{i}}}\\
 & =\lim_{n\longrightarrow\infty}\frac{\left|\mathbb{F}_{i}\right|^{nj_{i}-j_{i}^{2}/2+j_{i}/2}}{\left|\mathbb{F}_{i}\right|^{\left(n-j_{i}\right)j_{i}}}\\
 & =\left|\mathbb{F}_{i}\right|^{j_{i}^{2}/2+j_{i}/2}.
\end{align*}

It can be shown that 
\[
\lim_{n\longrightarrow\infty}\left|\mathrm{GL}_{n}\left(\mathbb{F}_{i}\right)\right|/\left|M_{n}\left(\mathbb{F}_{i}\right)\right|=\eta\left(\mathbb{F}_{i}\right).
\]
 This completes the proof.
\end{proof}
As mentioned in Remark \ref{rem:C_r} $\mu_{R}$ is not inversely
proportional to $\left|\mathrm{Aut}_{R}\left(M\right)\right|$ in
general. We give a condition for when this does hold.
\begin{lem}
\label{lem:ring condition}If $R$ contains $\mathbb{Z}_{l}$ then
$d_{i}\left(S_{R}\right)\le0$ for all $i$.
\end{lem}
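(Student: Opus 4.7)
\emph{Proof sketch.} The plan is to first decompose $R$ into a product of local factors using $l$-adic completeness, reduce to the local case, and then derive the desired bound from a $\mathbb{Z}_{l}$-rank argument applied to a presentation of $M$. Since $R$ contains $\mathbb{Z}_{l}$, it is $\mathbb{Z}_{l}$-torsion-free and finitely generated over $\mathbb{Z}_{l}$, hence a free $\mathbb{Z}_{l}$-module of some rank $d$. Being finite over the complete local ring $\mathbb{Z}_{l}$, it is $l$-adically complete and semi-local, so the standard idempotent decomposition of a complete semi-local Noetherian ring gives $R\cong\prod_{i=1}^{s}R_{m_{i}}$ as a product of complete local rings. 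Each factor $R_{m_{i}}$ is a direct summand of $R$ and therefore free over $\mathbb{Z}_{l}$ of some rank $d_{i}$, with $\sum_{i}d_{i}=d$. Under this decomposition any $M\in S_{R}$ splits as $M=\prod_{i=1}^{s}M_{i}$ with $M_{i}=e_{i}M$ an $R_{m_{i}}$-module, and one checks $M\otimes_{R}\mathbb{F}_{i}\cong M_{i}\otimes_{R_{m_{i}}}\mathbb{F}_{i}$ and $\mathrm{Tor}_{R}^{1}(M,\mathbb{F}_{i})\cong\mathrm{Tor}_{R_{m_{i}}}^{1}(M_{i},\mathbb{F}_{i})$. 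Hence $d_{m_{i}}(M)$ depends only on $M_{i}$, and we may assume $R$ is local with maximal ideal $m$, residue field $\mathbb{F}$, and $\mathrm{rank}_{\mathbb{Z}_{l}}R=d$.

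Given $M\in S_{R}$, fix any short exact sequence $0\to K\to R^{n}\to M\to 0$. Since $R^{n}$ is free, $\mathrm{Tor}_{R}^{1}(R^{n},\mathbb{F})=0$, and tensoring with $\mathbb{F}$ yields the four-term exact sequence
\[
0\longrightarrow\mathrm{Tor}_{R}^{1}(M,\mathbb{F})\longrightarrow K\otimes_{R}\mathbb{F}\longrightarrow\mathbb{F}^{n}\longrightarrow M\otimes_{R}\mathbb{F}\longrightarrow0.
\]
An Euler characteristic computation on this sequence gives $d_{m}(M)=n-\dim_{\mathbb{F}}K\otimes_{R}\mathbb{F}$, so it suffices to show $\dim_{\mathbb{F}}K/mK\ge n$.

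This is where the hypothesis $\mathbb{Z}_{l}\subset R$ enters. Because $M$ is finite it is $\mathbb{Z}_{l}$-torsion, while $R^{n}$ is a free $\mathbb{Z}_{l}$-module of rank $dn$; consequently $K$ is $\mathbb{Z}_{l}$-torsion-free of full $\mathbb{Z}_{l}$-rank $dn$, hence free of that rank over $\mathbb{Z}_{l}$. By Nakayama's lemma $\dim_{\mathbb{F}}K/mK$ equals the minimal number $N$ of $R$-module generators of $K$, and any such generating set yields a surjection $R^{N}\twoheadrightarrow K$. Applying $-\otimes_{\mathbb{Z}_{l}}\mathbb{Q}_{l}$ produces a surjection of $\mathbb{Q}_{l}$-vector spaces of dimensions $Nd$ and $dn$, forcing $N\ge n$, as required. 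The step needing the most care is the idempotent decomposition $R=\prod R_{m_{i}}$ and the fact that each factor inherits freeness over $\mathbb{Z}_{l}$, both of which rely on the $\mathbb{Z}_{l}$-flatness of $R$; once that is in hand the $\mathbb{Q}_{l}$-dimension bound is routine.
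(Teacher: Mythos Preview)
Your proof is correct and follows essentially the same strategy as the paper: reduce to the local case, take a presentation $0\to K\to R^{n}\to M\to 0$, read off $d_{m}(M)=n-\dim_{\mathbb{F}}K\otimes_{R}\mathbb{F}$ from the Tor sequence, and then bound the minimal number of $R$-generators of $K$ from below by $n$ via a $\mathbb{Q}_{l}$-rank argument combined with Nakayama. The only cosmetic difference is that the paper passes to the local situation by localizing at each $m_{i}$, whereas you invoke the product decomposition $R\cong\prod_{i}R_{m_{i}}$ of a complete semi-local ring; your route has the mild advantage that the nonvanishing $R_{m_{i}}\otimes_{\mathbb{Z}_{l}}\mathbb{Q}_{l}\neq 0$ (equivalently $d>0$, which you need for the final inequality $Nd\ge nd\Rightarrow N\ge n$) is immediate from $R_{m_{i}}$ being a nonzero direct summand of the $\mathbb{Z}_{l}$-free module $R$.
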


\begin{proof}
Let $M\in S_{R}$. As in the proof of Lemma \ref{lem:characterize T_R}
we have the sequence
\begin{equation}
0\longrightarrow K\longrightarrow R^{n}\longrightarrow M\longrightarrow0\label{eq:T_R exact-1}
\end{equation}
 which we can localize at any maximal ideal $m$ of $R$ and obtain
the equation $d_{i}\left(M\right)=n-\dim_{\mathbb{F}}K_{m}\otimes_{R_{m}}\mathbb{F}.$

Since $R$ contains $\mathbb{Z}_{l}$ we have $R_{m}\otimes_{\mathbb{Z}_{l}}\mathbb{Q}_{l}\neq0$
and hence $\left(R_{m}\otimes_{\mathbb{Z}_{l}}\mathbb{Q}_{l}\right)^{n}$
requires at least $n$ elements to generate over $R_{m}$.

Now $M_{m}\otimes_{\mathbb{Z}_{l}}\mathbb{Q}_{l}=0$. Hence tensoring
with $\mathbb{Q}_{l}$ we see that a minimal generating set for $K_{m}\otimes_{\mathbb{Z}_{l}}\mathbb{Q}_{l}\cong\left(R_{m}\otimes_{\mathbb{Z}_{l}}\mathbb{Q}_{l}\right)^{n}$
over $R_{m}$ has at least $n$ elements. Hence by Nakayama's lemma
$n\le\dim_{\mathbb{F}}K_{m}\otimes_{R_{m}}\mathbb{F}=\dim_{\mathbb{F}}K\otimes_{R}\mathbb{F}$.
This holds for all maximal ideals $m$ of $R$. Thus $d_{i}\left(M\right)\le0$.
\end{proof}
\begin{cor}
\label{cor:muR formula special case}If $R$ contains $\mathbb{Z}_{l}$
then $\mu_{R}$ is supported on $T_{R}$ and
\[
\mu_{R}\left(M\right)=\frac{c_{R}}{\left|\mathrm{Aut}_{R}\left(M\right)\right|}
\]
 where $c_{R}=\prod_{i=1}^{s}\eta\left(\mathbb{F}_{i}\right)$.
\end{cor}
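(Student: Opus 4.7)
The plan is to combine the general formula for $\mu_R$ from Theorem \ref{thm:The-measure-} with the constraint on $d_i$ provided by Lemma \ref{lem:ring condition} to collapse the dependence on $\vec{j}$.

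First I would apply Lemma \ref{lem:characterize T_R}: since $\mu_R$ is supported on $T_R$, any $M$ in the support satisfies $d_{m_i}(M) \ge 0$ for every $i$. Next I would invoke Lemma \ref{lem:ring condition}, which uses the hypothesis $\mathbb{Z}_l \subset R$ to give the reverse inequality $d_{m_i}(M) \le 0$ for every $M \in S_R$ and every $i$. Combining these forces $d_{m_i}(M) = 0$ for every $i$ whenever $M \in T_R$, so in the partition $T_R = \bigcup_{\vec{j} \in \mathbb{Z}_{\ge 0}^s} T_{R,\vec{j}}$ the only nonempty stratum is $T_{R,\vec{0}}$, i.e.\ $T_R = T_{R,\vec{0}}$.

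Consequently, by Theorem \ref{thm:The-measure-}, every $M \in T_R$ satisfies
\[
\mu_R(M) = \frac{c_{R,\vec{0}}}{|\mathrm{Aut}_R M|}.
\]
It remains to evaluate $c_{R,\vec{0}}$. I would apply Lemma \ref{lem:cR formula} with $\vec{j} = \vec{0}$: since each empty product equals $1$ and $|\mathbb{F}_i|^{0} = 1$, the formula collapses to
\[
c_{R,\vec{0}} = \prod_{i=1}^s \eta(\mathbb{F}_i),
\]
which is exactly the claimed constant $c_R$.

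There is essentially no obstacle here: the corollary is a direct combination of the preceding three results, with the one small point to verify being that the hypothesis of Lemma \ref{lem:cR formula} (namely $R/J(R) \cong \prod_{i=1}^s \mathbb{F}_i$) holds automatically from the Chinese remainder theorem decomposition established in Section 2.1. The conceptual content, that the measure becomes inversely proportional to $|\mathrm{Aut}_R M|$, is entirely captured by the vanishing $\vec{j} = \vec{0}$, which in turn reflects the fact that modules over a ring containing $\mathbb{Z}_l$ cannot have more generators than relations once they are finite (and hence $\mathbb{Z}_l$-torsion).
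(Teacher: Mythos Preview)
Your proposal is correct and follows essentially the same approach as the paper: combine Lemma~\ref{lem:characterize T_R} and Lemma~\ref{lem:ring condition} to force $d_{m_i}(M)=0$ for all $M\in T_R$, conclude $T_R=T_{R,\vec 0}$, and then read off the formula from Theorem~\ref{thm:The-measure-}. The only difference is that you make the evaluation $c_{R,\vec 0}=\prod_{i=1}^s\eta(\mathbb{F}_i)$ explicit via Lemma~\ref{lem:cR formula}, whereas the paper leaves this implicit.
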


\begin{proof}
Combining Lemma \ref{lem:characterize T_R} and \ref{lem:ring condition}
we see that $d_{i}\left(S_{R}\right)=0$. Thus $T_{R}=T_{R,\vec{0}}$
and the result follows from Theorem \ref{thm:The-measure-}.
\end{proof}
The following is Lemma 2.4 from \cite{lipnowskitsimerman} the proof
of which works identically in our case. It shows that $\mu_{R}$ is
determined by its moments under an assumption on the normalizing constant
$c_{R}$.
\begin{lem}
\label{lem:mu_R unique}Suppose $c_{R}>1/2$. Let $\nu:S_{R}\longrightarrow\mathbb{R}$
be a positive function satisfying 
\[
\int_{S_{R}}\left|\mathrm{Surj}_{R}\left(B,A\right)\right|d\nu\left(B\right)=1
\]
 for all $A\in S_{R}$. Then $\nu=\mu_{R}$.
\end{lem}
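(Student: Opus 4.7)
The plan is to adapt the argument of Lemma 2.4 in \cite{lipnowskitsimerman} to our setting. By Lemma \ref{lem:measure moments are 1}, $\mu_R$ itself satisfies the displayed moment identity, so the task reduces to uniqueness: any positive function $\nu$ on $S_R$ with $\int f_A\, d\nu = 1$ for all $A$ must coincide with $\mu_R$.

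First I would establish a M\"obius-type inversion formula expressing, for each $M_0 \in S_R$, the characteristic function $\mathbb{1}_{\{B \cong M_0\}}$ as a formal signed linear combination
\[
\mathbb{1}_{\{B \cong M_0\}}(B) \;=\; \sum_{A \in S_R} \alpha_{M_0, A}\, f_A(B),
\]
where the coefficients $\alpha_{M_0,A}$ arise from M\"obius inversion on the poset of isomorphism classes of $R$-modules dominating $M_0$ via surjection, combined with the factor $1/|\mathrm{Aut}_R(M_0)|$ needed to pass from surjection counts to the indicator. This is an identity of functions on $S_R$ that does not depend on $\nu$, and can be checked by a direct combinatorial manipulation once one has the basic relation $|\mathrm{Hom}_R(B,A)| = \sum_{A' \hookrightarrow A} |\mathrm{Surj}_R(B, A')|$ and the fact that each surjection $B \twoheadrightarrow M_0$ is counted $|\mathrm{Aut}_R(M_0)|$ times among $|\mathrm{Surj}_R(B, M_0)|$ when $B \cong M_0$.

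Given the inversion formula, the content of the lemma is that we may integrate it term by term against $\nu$. If this exchange is justified, then
\[
\nu(M_0) \;=\; \sum_{A \in S_R} \alpha_{M_0, A}\int_{S_R} f_A(B)\, d\nu(B) \;=\; \sum_{A \in S_R} \alpha_{M_0, A},
\]
a quantity independent of $\nu$. Since $\mu_R$ is one such $\nu$, applying the same identity to $\mu_R$ gives $\nu(M_0) = \mu_R(M_0)$ for every $M_0$.

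The main obstacle, and the only place where the hypothesis $c_R > 1/2$ enters, is justifying the exchange of sum and integral. The strategy is to bound the tail
\[
\sum_{|A| > N} |\alpha_{M_0, A}|\,\int_{S_R} f_A(B)\, d\nu(B)
\]
uniformly in $\nu$ and show it tends to $0$ as $N \to \infty$. Combining positivity of $\nu$ with the moment hypothesis gives $\int f_A\, d\nu = 1$, so everything reduces to estimating $\sum_{|A| > N} |\alpha_{M_0,A}|$; following \cite{lipnowskitsimerman}, the explicit form of $\alpha_{M_0, A}$ in terms of $|\mathrm{Aut}_R(A)|$ yields a geometric-type bound whose ratio is controlled by $1 - c_R < 1/2$. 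The only adaptation needed beyond the local case of \cite{lipnowskitsimerman} is to exploit the product decomposition $R/J(R) \cong \prod_i \mathbb{F}_i$ and the corresponding factorization $c_R = \prod_i \eta(\mathbb{F}_i)$ from Lemma \ref{lem:cR formula}, after which the convergence argument transfers essentially verbatim, completing the proof.
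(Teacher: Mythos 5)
The paper does not supply its own proof here; it simply cites Lemma 2.4 of \cite{lipnowskitsimerman} and asserts the argument transfers. Your proposal attempts to reconstruct that proof, but the route you take is not the one used there, and the step where you actually use $c_R>1/2$ is left unsubstantiated in exactly the place where all the difficulty lies.

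Your inversion step is fine as stated: for each fixed $B$ the sum $\sum_A \alpha_{M_0,A} f_A(B)$ is finite (only $A$ with $|A|\le |B|$ contribute, since $f_A(B)=|\mathrm{Surj}_R(B,A)|$), the system is lower triangular with diagonal entries $|\mathrm{Aut}_R(A)|$, and an easy induction shows the solution is supported on $A$ with $A\twoheadrightarrow M_0$. The issue is your Fubini step. To conclude $\nu(M_0)=\sum_A \alpha_{M_0,A}\int f_A\,d\nu=\sum_A \alpha_{M_0,A}$ you need $\sum_A |\alpha_{M_0,A}|\,\int f_A\,d\nu=\sum_A|\alpha_{M_0,A}|<\infty$, and you offer no argument for this beyond the bare assertion that the coefficients admit a ``geometric-type bound whose ratio is controlled by $1-c_R$.'' That is the entire content of the lemma; asserting it is not proving it. Moreover the stated ratio $1-c_R$ does not appear to be the correct one (see below), which suggests the estimate was not actually carried out. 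Without an explicit bound on the $\alpha_{M_0,A}$ showing absolute summability, the proof is incomplete at its critical point.

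For comparison, the argument of \cite{lipnowskitsimerman} is a contraction argument that avoids computing or summing the inversion coefficients at all. Set $h=\nu-\mu_R$. Plugging $B=A$ into $\int f_A\,d\nu=1$ and using positivity gives $\nu(A)\le 1/|\mathrm{Aut}_R(A)|$, and likewise for $\mu_R$, so $s:=\sup_M |h(M)|\,|\mathrm{Aut}_R(M)|\le 1$. Now $\int f_{M_0}\,dh=0$ says $h(M_0)|\mathrm{Aut}_R(M_0)|=-\sum_{M\not\cong M_0}h(M)|\mathrm{Surj}_R(M,M_0)|$, whence
\[
|h(M_0)|\,|\mathrm{Aut}_R(M_0)|\;\le\; s\sum_{M\not\cong M_0}\frac{|\mathrm{Surj}_R(M,M_0)|}{|\mathrm{Aut}_R(M)|}\;=\;s\left(\frac{1}{c_R}-1\right),
\]
where the last step uses $\mu_R(M)=c_R/|\mathrm{Aut}_R(M)|$ and $\sum_M \mu_R(M)|\mathrm{Surj}_R(M,M_0)|=1$. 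Taking the supremum over $M_0$ gives $s\le (1/c_R-1)s$, and since $c_R>1/2$ means $1/c_R-1<1$, this forces $s=0$, i.e.\ $\nu=\mu_R$. Note the relevant contraction constant is $1/c_R-1$, not $1-c_R$. This argument sidesteps the convergence issue you left open, and it also makes transparent why the threshold $1/2$ is what it is. If you want to salvage your version, you would need to actually establish the absolute summability of $(\alpha_{M_0,A})_A$ with an explicit bound in terms of $c_R$, which is a genuine (and nontrivial) additional estimate.
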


Thus we have shown that $\mu_{R}$ satisfies all of the same familiar
properties as the classical Cohen-Lenstra measure on finite abelian
$l$-groups.

\subsection{\label{subsec:The-distribution-of}The distribution of the function
$\mathrm{rk}_{l}$ on $S_{R}$}

Let $R_{0}=R/lR$. Since $R$ is finite over $\mathbb{Z}_{l}$ this
implies $R_{0}$ is finite.
\begin{lem}
\label{lem:prob reduced mod p}For any $m'$ we have 
\[
\mu_{R}\left(\left\{ M\in S_{R}\mid\mathrm{rk}_{l}M/lM=m'\right\} \right)=\mu_{R_{0}}\left(\left\{ M\in S_{R_{0}}\mid\mathrm{rk}_{l}M=m'\right\} \right).
\]
\end{lem}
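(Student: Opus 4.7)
The plan is to show that the map $\Phi \colon S_R \to S_{R_0}$ defined by $\Phi(M) = M \otimes_R R_0 = M/lM$ pushes $\mu_R$ forward to $\mu_{R_0}$. Granting this, the lemma follows since the set $B_{m'} = \{N \in S_{R_0} \mid \mathrm{rk}_l N = m'\}$ has preimage $A_{m'} := \Phi^{-1}(B_{m'}) = \{M \in S_R \mid \mathrm{rk}_l(M/lM) = m'\}$.

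First I would establish the pushforward identity at the finite level $n$. The reduction homomorphism $\pi \colon \mathrm{End}_R(R^n) \to \mathrm{End}_{R_0}(R_0^n)$ is a continuous surjection of compact groups, so it pushes the Haar measure $\mu_{\mathrm{haar},n}$ forward to Haar (the uniform measure) on the finite set $\mathrm{End}_{R_0}(R_0^n)$. By right exactness of $-\otimes_R R_0$, for any $\varphi \in \mathrm{End}_R(R^n)$ we have $\Phi(\mathrm{coker}\,\varphi) = \mathrm{coker}(\pi(\varphi))$. Combining these two facts gives $\Phi_* \mu_n = \mu_n^{R_0}$, where $\mu_n^{R_0}$ is the analogous measure defined using $R_0$. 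In particular, for any $N \in S_{R_0}$,
\[
\mu_n(\Phi^{-1}\{N\}) = \mu_n^{R_0}(\{N\}).
\]

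Next I would pass to the limit $n \to \infty$. The set $B_{m'}$ is finite: any $N \in B_{m'}$ has cardinality $l^{m'}$ since $l$ acts as $0$ on any $R_0$-module, and there are only finitely many isomorphism classes of $R_0$-modules of bounded size. Hence weak convergence $\mu_n^{R_0} \to \mu_{R_0}$ yields
\[
\mu_n(A_{m'}) = \mu_n^{R_0}(B_{m'}) \longrightarrow \mu_{R_0}(B_{m'}).
\]
On the other hand, applying Fatou's lemma to the countable sum $\mu_n(A_{m'}) = \sum_{M \in A_{m'}} \mu_n(\{M\})$ together with the pointwise convergence $\mu_n(\{M\}) \to \mu_R(\{M\})$ guaranteed by weak convergence yields the one-sided inequality $\mu_R(A_{m'}) \leq \mu_{R_0}(B_{m'})$.

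The main obstacle is that $A_{m'}$ is typically infinite, so the Fatou step only provides an inequality and in principle there could be escape of mass along $\{\mu_n\}$ on the $R$-side. This is resolved by summing over $m' \geq 0$: since $S_R = \bigsqcup_{m'} A_{m'}$ and $\mu_R$ is a probability measure by Lemma~\ref{lem:measure moments are 1}, we have $\sum_{m'} \mu_R(A_{m'}) = 1$, and similarly $\sum_{m'} \mu_{R_0}(B_{m'}) = 1$. Combined with the per-$m'$ inequalities from the previous step, equality $\mu_R(A_{m'}) = \mu_{R_0}(B_{m'})$ must then hold for every $m'$, which is the content of the lemma.
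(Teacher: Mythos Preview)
Your proof is correct and follows essentially the same route as the paper: both establish the finite-level identity $\Phi_*\mu_n=\mu_n^{R_0}$ via the pushforward of Haar under reduction mod $l$ together with right exactness of $-\otimes_R R_0$, and then pass to the $n\to\infty$ limit. Your limit-passing step, using Fatou to get a per-$m'$ inequality and then summing to $1$ to force equality, is more explicit than the paper's, which simply asserts that one may ``take the limit in $n$'' without commenting on the interchange of limit and (possibly infinite) sum.
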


\begin{proof}
By definition for any $M\in S_{R}$ we have $\mu_{R}\left(M\right)=\lim_{n\longrightarrow\infty}\mu_{R,n}\left(M\right)$
and 
\[
\mu_{R,n}\left(M\right)=\mu_{\mathrm{haar}}\left(\left\{ N\in\mathrm{Mat}_{n}\left(R\right)\mid\mathrm{coker}N=M\right\} \right).
\]
Since the pushforward of $\mu_{\mathrm{haar}}$ by the reduction $\mathbb{Z}_{l}\longrightarrow\mathbb{F}_{l}$
is the uniform measure (and consequently the same is true for $\mathrm{End}\left(R^{n}\right)\longrightarrow\mathrm{End}\left(R_{0}^{n}\right)$)
we have for any $M_{0}\in S_{R_{0}}$
\begin{align*}
\mu_{R_{0},n}\left(M_{0}\right) & =\left|R_{0}\right|^{-n^{2}}\cdot\left|\left\{ \phi\in\mathrm{End}\left(R_{0}^{n}\right)\mid\mathrm{coker}\phi=M_{0}\right\} \right|\\
 & =\mu_{\mathrm{haar}}\left(\bigcup_{\substack{M\in S_{R}\\
M/lM\cong M_{0}
}
}\left\{ \phi\in\mathrm{End}\left(R^{n}\right)\mid\mathrm{coker}\phi=M\right\} \right)\\
 & =\sum_{\substack{M\in S_{R}\\
M/lM\cong M_{0}
}
}\mu_{R,n}\left(M\right).
\end{align*}
The result follows by taking the limit in $n$ and partitioning the
sum over $M\in S_{R}$ into their reductions modulo $l$.
\end{proof}
Note the set on the right-hand side in Lemma \ref{lem:prob reduced mod p}
is finite. We can give an even more explicit formula for this sum
when $R_{0}$ is a quotient of a Euclidean domain, using the structure
theorem and the formula for the size of the automorphism group $\mathrm{Aut}_{R}\left(M\right)=\mathrm{Aut}_{R_{0}}\left(M\right)$
given in \cite{sizeofautomorphismgroup} (stated there for finite
abelian groups, but the proof works for any Euclidean domain).

For simplicity we will assume for the remainder of this section that
$R$ is local which will suffice for our applications. Since by assumption
$R_{0}$ is a finite discrete valuation ring there exists $m$ such
that $\pi^{m}R_{0}=0$ where $\left(\pi\right)$ is the maximal ideal
of $R_{0}$. Let $\mathcal{P}\left(m'\right)$ be the set of partitions
of the integer $m'$, that is tuples of positive integers for which
the sum of the co-ordinates is $m'$. For any positive integer $j$
we define the subset $\mathcal{P}_{j}\left(m'\right)\subset\mathcal{P}\left(m'\right)$
\[
\mathcal{P}_{j}\left(m'\right)=\left\{ \lambda\in\mathcal{P}\left(m'\right)\mid\text{exactly \ensuremath{j} co-ordinates of \ensuremath{\lambda} are equal to \ensuremath{m}}\right\} .
\]

\begin{cor}
\label{cor:explicit formula}Suppose $R$ is local and $R_{0}$ is
a quotient of a Euclidean domain. Let $\mathbb{F}$ be the residue
field of $R_{0}$ and let $\left|\mathbb{F}\right|=l^{k}$. Then 
\begin{align*}
\mu_{R}\left(\left\{ M\in S_{R}\mid\mathrm{rk}_{l}M/lM=m'\right\} \right) & =\sum_{j=0}^{m'/k}c_{R,j}\sum_{\lambda\in\mathcal{P}_{j}\left(m'/k\right)}\prod_{k=1}^{n_{\lambda}}\left(\left|\mathbb{F}\right|^{d_{k}}-\left|\mathbb{F}\right|^{k-1}\right)^{-1}\prod_{j=1}^{n_{\lambda}}\left(\left|\mathbb{F}\right|^{\lambda_{j}}\right)^{-n_{\lambda}+d_{j}}\\
 & \qquad\times\prod_{i=1}^{n_{\lambda}}\left(\left|\mathbb{F}\right|^{\lambda_{i}-1}\right)^{-n_{\lambda}+c_{i}-1}
\end{align*}
 where for each $\lambda\in\mathcal{P}\left(m'\right)$ of the form
$\left(\lambda_{1},\ldots,\lambda_{n_{\lambda}}\right)$ we define
$d_{k}=\max\left\{ r\mid\lambda_{r}=\lambda_{k}\right\} $ and $c_{k}=\min\left\{ r\mid\lambda_{r}=\lambda_{k}\right\} $.
\end{cor}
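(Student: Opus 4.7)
The plan is to first apply Lemma \ref{lem:prob reduced mod p} to reduce the computation to evaluating $\mu_{R_0}(\{M \in S_{R_0} \mid \mathrm{rk}_l M = m'\})$, where $R_0 = R/lR$. Since $R$ is local, $R_0$ is a finite local ring with the same residue field $\mathbb{F}$; the assumption that $R_0$ is a quotient of a Euclidean domain together with locality forces $R_0 \cong S/(\pi^m)$ for some Euclidean domain $S$ with prime $\pi$. By the structure theorem the finite $R_0$-modules are therefore in bijection with partitions $\lambda = (\lambda_1 \geq \cdots \geq \lambda_{n_\lambda})$ with parts bounded by $m$, via $M_\lambda = \bigoplus_i R_0/(\pi^{\lambda_i})$. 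Since $|\mathbb{F}| = l^k$ we have $\dim_{\mathbb{F}_l} M_\lambda = k|\lambda|$, so the modules contributing to the sum are exactly the $M_\lambda$ with $\lambda \in \mathcal{P}(m'/k)$; in particular the formula gives $0$ automatically when $k \nmid m'$.

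Next I invoke Theorem \ref{thm:The-measure-} to write $\mu_{R_0}(M_\lambda) = c_{R_0, j(\lambda)}/|\mathrm{Aut}_{R_0}(M_\lambda)|$, where $j(\lambda) = d_{m_0}(M_\lambda)$. Using the short exact sequence $0 \to R_0/(\pi^{m-a}) \to R_0 \to R_0/(\pi^a) \to 0$ (with the left arrow multiplication by $\pi^a$) and the long exact Tor sequence after tensoring with $\mathbb{F}$, a direct computation shows $d_{m_0}(R_0/(\pi^a))$ equals $1$ when $a = m$ (the free case) and $0$ otherwise. Summing over cyclic summands gives $j(\lambda) = \#\{i : \lambda_i = m\}$, which is precisely the condition defining $\mathcal{P}_j(m'/k)$. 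Moreover, because $R$ and $R_0$ share the same residue field, Lemma \ref{lem:cR formula} yields $c_{R_0, j} = c_{R, j}$, matching the constants in the statement. Substituting the explicit order-of-automorphism-group formula from \cite{sizeofautomorphismgroup} into
\[
\mu_{R_0}\bigl(\{M : \mathrm{rk}_l M = m'\}\bigr) = \sum_{j \geq 0} c_{R, j} \sum_{\lambda \in \mathcal{P}_j(m'/k)} \frac{1}{|\mathrm{Aut}_{R_0}(M_\lambda)|}
\]
then yields the stated triple product; extending the outer sum to run up to $m'/k$ is harmless since $\mathcal{P}_j(m'/k)$ is empty whenever $jm > m'/k$.

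The main obstacle is the transfer of the automorphism-group formula of \cite{sizeofautomorphismgroup}, which is stated there for finite abelian $p$-groups, to modules over the ring $R_0$. The derivation of that formula is purely structural, relying only on the structure theorem for modules over a principal ideal ring and on the identification $\pi^i R_0/\pi^{i+1} R_0 \cong \mathbb{F}$; it therefore carries over verbatim with $p$ replaced by $|\mathbb{F}|$. Once this transfer is in hand, the remainder of the argument is routine bookkeeping in the partition invariants $c_k$ and $d_k$.
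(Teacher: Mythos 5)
Your proof takes essentially the same route as the paper's: reduce to $R_0$-modules via Lemma \ref{lem:prob reduced mod p}, classify finite $R_0$-modules by partitions with parts bounded by $m$ via the structure theorem, identify $T_{R_0,j}$ as the modules whose partition has exactly $j$ parts equal to $m$, apply Theorem \ref{thm:The-measure-} together with Lemma \ref{lem:cR formula} to get $c_{R_0,j}=c_{R,j}$, and finally substitute the automorphism-group count from \cite{sizeofautomorphismgroup}. The paper states the identification of $T_{R_0,j}$ as ``easy to see''; your Tor computation (using the periodic free resolution $\cdots \to R_0 \xrightarrow{\pi^{m-a}} R_0 \xrightarrow{\pi^a} R_0 \to R_0/(\pi^a)\to 0$) is a correct way to make that explicit, and your remark about transferring the automorphism formula from the $p$-group case is exactly the caveat the paper flags in the preceding paragraph. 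No gaps.
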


\begin{proof}
We reduce to a finite sum over $R_{0}$-modules by Lemma \ref{lem:prob reduced mod p}.
Note since $R_{0}$ is a quotient of a PID every $R_{0}$-module $M$
is of the form 
\[
M\cong\bigoplus_{i=1}^{n}R_{0}/\left(\pi\right)^{n_{i}}
\]
 and if $\mathrm{rk}_{l}M=m'$ this implies $n\le m'/k$.

Recall the support of $\mu_{R_{0}}$ is a disjoint union $T_{R_{0}}=\bigcup_{j=0}^{\infty}T_{R_{0},j}$
(see (\ref{eq:partition TR})). In this case, since $R_{0}/\left(\pi\right)^{m}\cong R_{0}$,
it is easy to see that $M\in T_{R_{0},j}$ is equivalent to exactly
$j$ of the $n_{i}$ being equal to $m$.
\end{proof}
We can also compute the moments of this distribution.

We start by recalling some definitions. Henceforth by a partition
$\mu=\left(\mu_{1},\mu_{2},\ldots\right)$ we will mean a tuple of
integers such that $\mu_{i}\ge\mu_{i+1}$. We write $\mu\le\lambda$
if $\mu_{i}\le\lambda_{i}$ for all $i$. Given a partition $\mu$
we denote by $\mu'$ the transpose partition.

For positive integers $k\le n$ we define 
\begin{align*}
\left[\begin{array}{c}
n\\
k
\end{array}\right]_{l} & =\frac{\prod_{i=1}^{n}\left(l^{i}-1\right)}{\prod_{i=1}^{k}\left(l^{i}-1\right)\prod_{i=1}^{n-k}\left(l^{i}-1\right)}
\end{align*}
 which is equal to the number of $k$-dimensional subspaces of $\mathbb{F}_{l}^{n}$.
\begin{prop}
Suppose $R$ is local and $R_{0}$ is a quotient of a Euclidean domain.
The $k$th moment of the function $M\mapsto l^{\mathrm{rk}_{l}M}$
on $S_{R}$ is
\begin{align*}
\sum_{M\in S_{R}}\mu_{R}\left(M\right)l^{k\mathrm{rk}_{l}M} & =\sum_{\mu\le\lambda_{m}}\prod_{j=1}^{m}l^{\mu'_{j+1}\left(k-\mu_{j+1}'\right)}\left[\begin{array}{c}
k-\mu'_{j+1}\\
\mu_{j}'-\mu'_{j+1}
\end{array}\right]_{l}
\end{align*}
 where $\lambda_{m}=\left(m,\ldots,m\right)$ is a partition of rank
$k$.
\end{prop}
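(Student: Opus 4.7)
My plan is to reinterpret the moment as a submodule count through tensor-hom adjunction, and then enumerate those submodules combinatorially. The identity
\[
l^{k\mathrm{rk}_l M}=|M/lM|^k=|\mathrm{Hom}_{\mathbb{Z}_l}(M,\mathbb{F}_l^k)|
\]
holds because every $\mathbb{Z}_l$-linear map into the $l$-torsion module $\mathbb{F}_l^k$ factors through $M/lM$. Combined with the adjunction coming from $M\otimes_R R\cong M$, this gives
\[
\int_{S_R} l^{k\mathrm{rk}_l M}\,d\mu_R(M)=\int_{S_R}|\mathrm{Hom}_R(M,A)|\,d\mu_R(M),\qquad A:=\mathrm{Hom}_{\mathbb{Z}_l}(R,\mathbb{F}_l^k),
\]
where $R$ acts on $A$ by $(r\cdot f)(s)=f(rs)$.

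Next I would identify $A$ explicitly. Because $\mathbb{F}_l^k$ is $l$-torsion, $A\cong\mathrm{Hom}_{\mathbb{F}_l}(R_0,\mathbb{F}_l)^k=(R_0^\vee)^k$ as $R_0$-modules. The standing hypothesis that $R_0$ is a local quotient of a Euclidean domain forces $R_0\cong\mathcal{O}/(\pi^m)$ for some DVR $\mathcal{O}$, and such rings are Frobenius $\mathbb{F}_l$-algebras: a trace-type functional $r\mapsto[\text{coefficient of }\pi^{m-1}]$ induces a perfect $R_0$-bilinear pairing on $R_0$, so $R_0^\vee\cong R_0$ as $R_0$-modules and $A\cong R_0^k$.

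Since every $R$-homomorphism has a unique image, decompose $|\mathrm{Hom}_R(M,A)|=\sum_{B\le A}|\mathrm{Surj}_R(M,B)|$ (a finite sum). Integrating termwise and invoking Lemma~\ref{lem:measure moments are 1}, which gives $\int|\mathrm{Surj}_R(X,B)|\,d\mu_R=1$ for every $B\in S_R$, the moment collapses to the number of $R_0$-submodules of $R_0^k$. I would then enumerate these submodules by their isomorphism type $\mu\le\lambda_m$ via a Birkhoff-style induction along the $\pi$-adic filtration $R_0^k\supseteq\pi R_0^k\supseteq\cdots\supseteq 0$: at stage $j$ one specifies the image of the submodule in the quotient $\pi^{j-1}R_0^k/\pi^j R_0^k\cong\mathbb{F}_l^k$, producing a Gaussian binomial factor with parameters $k-\mu'_{j+1}$ and $\mu'_j-\mu'_{j+1}$, and then counts compatible lifts, producing a power of $l$ with exponent $\mu'_{j+1}(k-\mu'_{j+1})$. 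Summing over $\mu\le\lambda_m$ yields the displayed formula. The main obstacle is executing the Birkhoff bookkeeping so that the exponents and binomial indices emerge exactly as claimed; the Frobenius identification $R_0^\vee\cong R_0$ is the only structural input beyond results already developed in the paper.
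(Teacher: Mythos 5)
Your proposal follows the paper's proof exactly in structure: dualize via tensor--hom adjunction, identify the dualizing $R_0$-module $R_0^{\vee}=\mathrm{Hom}_{\mathbb{F}_l}(R_0,\mathbb{F}_l)$ with $R_0$, expand $|\mathrm{Hom}_R(M,R_0^k)|$ as a sum of surjection counts over submodules of $R_0^k$, apply Lemma~\ref{lem:measure moments are 1} to collapse the moment to $\#\{N\le R_0^k\}$, and then count submodules by type $\mu\le\lambda_m$. The one spot where you do something genuinely better is the self-duality step: the paper asserts that, after fixing an $\mathbb{F}_l$-basis, the basis-to-dual-basis map $R_0\to R_0^{\vee}$ is an $R$-module isomorphism, which is false for a generic basis (e.g.\ for $R_0=\mathbb{F}_l[X]/(X^2)$ with basis $\{1,X\}$ one has $X\cdot 1^{*}=0\neq X^{*}$). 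Your derivation from the Frobenius/Gorenstein structure of $\mathcal{O}/(\pi^m)$ --- a perfect $R_0$-bilinear trace pairing --- is the correct and robust way to obtain $R_0^{\vee}\cong R_0$, and is exactly what the paper's argument needs to be saying. Finally, you propose to re-derive the submodule-count formula by Birkhoff-style induction on the $\pi$-adic filtration rather than citing it; you do not carry this out, but the paper itself only cites references for that count, so your proposal is at the same level of completeness as the original on this point.
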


\begin{proof}
We write the $k$th moment as
\[
\sum_{M\in S_{R}}\mu_{R}\left(M\right)l^{k\mathrm{rk}_{l}M}=\sum_{M\in S_{R}}\mu_{R}\left(M\right)\left|\mathrm{Hom}_{\mathbb{F}_{l}}\left(M,\mathbb{F}_{l}\right)\right|^{k}.
\]
Let $R_{0}=R/l$ and denote by $R_{0}^{\wedge}=\mathrm{Hom}_{\mathbb{F}_{l}}\left(R_{0},\mathbb{F}_{l}\right)$
the $\mathbb{F}_{l}$-dual. By the tensor-hom adjunction we have 
\begin{align*}
\mathrm{Hom}_{\mathbb{F}_{l}}\left(M,\mathbb{F}_{l}\right) & =\mathrm{Hom}_{\mathbb{F}_{l}}\left(M\otimes_{\mathbb{F}_{l}}R_{0},\mathbb{F}_{l}\right)\\
 & \cong\mathrm{Hom}_{R}\left(M,\mathrm{Hom}_{\mathbb{F}_{l}}\left(R_{0},\mathbb{F}_{l}\right)\right).
\end{align*}
 The action of $R$ on $R_{0}^{\wedge}$ can be described as follows.
Choose an $\mathbb{F}_{l}$-basis for $R_{0}$. This gives a map $R\longrightarrow\mathrm{Mat}_{\mathrm{rk}M}\left(\mathbb{F}_{l}\right)$.
Then for $f\in R_{0}^{\wedge}$ and $r\in R$ we have $r\cdot f\left(x\right)=f\left(r^{t}x\right)$
where $r^{t}$ is the transpose matrix. Then it is easy to see that
with the above choice of basis the canonical map $R_{0}\longrightarrow R_{0}^{\wedge}$
is an $R$-module isomorphism.

Thus we have 
\begin{align*}
\sum_{M\in S_{R}}\mu_{R}\left(M\right)\left|\mathrm{Hom}_{\mathbb{F}_{l}}\left(M,\mathbb{F}_{l}^{k}\right)\right| & =\sum_{M\in S_{R}}\mu_{R}\left(M\right)\left|\mathrm{Hom}_{R}\left(M,R_{0}^{k}\right)\right|.
\end{align*}
 We rewrite this in terms of surjection moments which we can apply
our previous results to. By rearranging the sum and applying Lemma
\ref{lem:measure moments are 1} we obtain 
\begin{align*}
\sum_{M\in S_{R}}\mu_{R}\left(M\right)\left|\mathrm{Hom}_{R_{0}}\left(M/lM,R_{0}^{k}\right)\right| & =\sum_{M\in S_{R}}\mu_{R}\left(M\right)\sum_{N\le R_{0}^{k}}\left|\mathrm{\mathrm{Surj}}_{R}\left(M,N\right)\right|\\
 & =\sum_{N\le R_{0}^{k}}\sum_{M\in S_{R}}\mu_{R}\left(M\right)\left|\mathrm{\mathrm{Surj}}_{R}\left(M,N\right)\right|\\
 & =\sum_{N\le R_{0}^{k}}1.
\end{align*}
 Recall $R_{0}$ is a quotient of a PID and we can assume $\pi^{m}R_{0}$
for some $m$, where $\left(\pi\right)$ is the maximal ideal of $R_{0}$.
Thus isomorphism classes of $R_{0}$-modules are in bijection with
partitions $\mu$ with $\mu_{i}\le m$ for all $i$.

Let $\lambda_{m}=\left(m,\ldots,m\right)$. Hence each submodule of
$R_{0}^{k}$ has a corresponding partition $\mu\subset\lambda_{m}$.
Given a partition $\mu$ the number of submodules of $R_{0}^{k}$
corresponding to $\mu$ is given by the following formula which can
be found for instance in \cite{numberofsubgroups,garrettnumberofsubgroups}
\begin{align*}
\prod_{j=1}^{m}l^{\mu'_{j+1}\left(\lambda_{j}'-\mu_{j+1}'\right)}\left[\begin{array}{c}
\lambda_{j}'-\mu'_{j+1}\\
\mu_{j}'-\mu_{j+1}
\end{array}\right] & =\prod_{j=1}^{m}l^{\mu'_{j+1}\left(k-\mu_{j+1}'\right)}\left[\begin{array}{c}
k-\mu'_{j+1}\\
\mu_{j}'-\mu'_{j+1}
\end{array}\right]_{l}
\end{align*}
 Thus we conclude
\[
\sum_{N\le R_{0}^{k}}1=\sum_{\mu\le\lambda_{m}}\prod_{j=1}^{m}l^{\mu'_{j+1}\left(k-\mu_{j+1}'\right)}\left[\begin{array}{c}
k-\mu'_{j+1}\\
\mu_{j}'-\mu_{j+1}
\end{array}\right]_{l}.
\]
\end{proof}

\section{\label{sec:Jacobians-of-hyperelliptic}Jacobians of hyperelliptic
curves}

Let $\mathcal{M}_{g}\left(\mathbb{F}_{q}\right)$ denote the moduli
space of hyperelliptic curves of genus $g$ defined over $\mathbb{F}_{q}$.
For each $C\in\mathcal{M}_{g}$ let $\mathrm{Jac}\left(C\right)$
denote the Jacobian. Let $F$ be the Frobenius operator.

Let $P\left(X\right)\in\mathbb{Z}_{l}\left[X\right]$ such that $P\left(0\right)$
is invertible in $\mathbb{Z}_{l}$. Define the ring $R=\mathbb{Z}_{l}\left[X\right]/\left\langle P\left(X\right)\right\rangle $.

Let $T_{l}\left(\mathrm{Jac}\left(C\right)\right)$ denote the $l$-adic
Tate module of $\mathrm{Jac}\left(C\right)$. We will view this as
a module over $\mathbb{Z}_{l}\left[X\right]$ with $X$ acting as
$F$. Let $G$ be a fixed finite etale group scheme. Lipnowski and
the second author proved a theorem \cite[Theorem 1.1]{lipnowskitsimerman}
about the averages of $\left|\mathrm{Surj}\left(\mathrm{Jac}\left(C\right),G\right)\right|$
as $C$ varies over $\mathcal{M}_{g}\left(\mathbb{F}_{q}\right)$
which roughly says that the average of this function approaches 1
as $g$ and $q$ go to infinity. We can combine this with the results
of Section \ref{subsec:Moments-and-limits} to prove the distributions
of the cokernels $T_{l}\left(\mathrm{Jac}\left(C\right)\right)/\left\langle P\left(F\right)\right\rangle $
approach the measure $\mu_{R}$ defined in Section \ref{sec:The-Cohen-Lenstra-measure}.

Call $\lambda\in\overline{\mathbb{Q}_{l}}$ a Weil $q$-number if
it satisfies $\left|\phi\left(\lambda\right)\right|=\sqrt{q}$ for
any embedding $\phi:\overline{\mathbb{Q}_{l}}\longrightarrow\mathbb{C}$.
All the eigenvalues of $F$ acting on $T_{l}\left(\mathrm{Jac}\left(C\right)\right)\otimes\overline{\mathbb{Q}_{l}}$
are Weil $q$-numbers \cite{delignWeilconjectures}. Though we prove
the next theorem under the assumption that the roots of $P\left(F\right)$
are not Weil-$q$ numbers, we expect it is true when restricted to
a subset of $\mathcal{M}_{g}$ where $T_{l}\left(\mathrm{Jac}\left(C\right)\right)/\left\langle P\left(F\right)\right\rangle $
is finite.

Fix $g,q$ and define the probability measure on $S_{R}$ by 
\[
\mu_{g,q}\left(M\right)=\frac{\left|\left\{ C\in\mathcal{M}_{g}\left(\mathbb{F}_{q}\right)\mid T_{l}\left(\mathrm{Jac}\left(C\right)\right)/\left\langle P\left(F\right)\right\rangle \cong M\right\} \right|}{\left|\mathcal{M}_{g}\left(\mathbb{F}_{q}\right)\right|}.
\]

\begin{thm}
\label{thm:cokernel density}Assume $\prod_{i=1}^{s}\eta\left(\mathbb{F}_{i}\right)>1/2$.
Suppose the roots of $P\left(F\right)$ are not Weil $q$-numbers.
Then the sequence of probability measures $\left\{ \mu_{g,q}\right\} $
converges weakly to $\mu_{R}$ along any sequence with $l\nmid P\left(q\right)$.
\end{thm}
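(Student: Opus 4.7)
The strategy is to apply the abstract convergence machinery of Theorem \ref{thm:conv of moments gives measure} to the sequence $\{\mu_{g,q}\}$, using the moment result \cite[Theorem 1.1]{lipnowskitsimerman} as input, and to identify the resulting weak limit by means of Lemma \ref{lem:mu_R unique}. First, each $\mu_{g,q}$ is a genuine probability measure on $S_R$: the characteristic polynomial of $F$ on $T_l(\mathrm{Jac}(C))$ is a Weil $q$-polynomial, so when no root of $P$ is a Weil $q$-number, $P(F)$ acts invertibly on $T_l(\mathrm{Jac}(C)) \otimes_{\mathbb{Z}_l} \mathbb{Q}_l$ and the cokernel $T_l(\mathrm{Jac}(C))/\langle P(F)\rangle$ is a finite $R$-module for every $C$.

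Next I would compute the $A$-moment of $\mu_{g,q}$ for each $A \in S_R$. Since $P(F)A = 0$, any $\mathbb{Z}_l[F]$-linear map $T_l(\mathrm{Jac}(C)) \to A$ factors uniquely through the cokernel, and for $A$ finite with $l^n A = 0$ we also have $\mathrm{Hom}_{\mathbb{Z}_l[F]}(T_l(\mathrm{Jac}(C)), A) = \mathrm{Hom}_{\mathbb{Z}_l[F]}(\mathrm{Jac}(C)[l^n], A)$ via the identification $T_l/l^n T_l \cong \mathrm{Jac}(C)[l^n]$. Consequently
\begin{equation*}
\int_{S_R} f_A(X)\, d\mu_{g,q}(X) = \frac{1}{|\mathcal{M}_g(\mathbb{F}_q)|} \sum_{C \in \mathcal{M}_g(\mathbb{F}_q)} |\mathrm{Surj}_{\mathbb{Z}_l[F]}(\mathrm{Jac}(C), A)|,
\end{equation*}
and \cite[Theorem 1.1]{lipnowskitsimerman}, applicable because $l \nmid P(q)$ trivializes the relevant Weil pairing, asserts that this average tends to $1$ as $g, q \to \infty$.

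With moment convergence in hand, Theorem \ref{thm:conv of moments gives measure} produces a weak limit of $\{\mu_{g,q}\}$ that is itself a probability measure on $S_R$ all of whose $f_A$-moments equal $1$. Because $R = \mathbb{Z}_l[X]/\langle P(X) \rangle$ contains $\mathbb{Z}_l$, Corollary \ref{cor:muR formula special case} applies and the normalizing constant is $c_R = \prod_{i=1}^s \eta(\mathbb{F}_i)$; the hypothesis $\prod_{i=1}^s \eta(\mathbb{F}_i) > 1/2$ is then exactly what is needed to invoke Lemma \ref{lem:mu_R unique}, identifying the weak limit as $\mu_R$. Since every convergent subsequence has the same limit, the full sequence $\{\mu_{g,q}\}$ converges weakly to $\mu_R$ along any sequence with $g, q \to \infty$ and $l \nmid P(q)$.

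The main obstacle is the bookkeeping in the second step: one must verify carefully that the $\mathbb{Z}_l[F]$-module surjection average of \cite[Theorem 1.1]{lipnowskitsimerman} genuinely coincides with the $R$-module moment of $\mu_{g,q}$ for every $A \in S_R$, and check that the absence of Weil $q$-roots of $P$ together with $l \nmid P(q)$ puts us squarely inside the scope of that moment theorem. Beyond this translation, the argument is a direct concatenation of the abstract convergence result with the uniqueness characterization of $\mu_R$ obtained in Section \ref{sec:The-Cohen-Lenstra-measure}.
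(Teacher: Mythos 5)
Your proposal is correct and follows essentially the same route as the paper: invoke \cite[Theorem 1.1]{lipnowskitsimerman} to get convergence of moments of $\mu_{g,q}$ to $1$, apply Theorem \ref{thm:conv of moments gives measure} to deduce weak convergence to a probability measure with all moments equal to $1$, and then identify that limit as $\mu_{R}$ via Lemma \ref{lem:mu_R unique}. The extra bookkeeping you flag (translating the $\mathbb{Z}_{l}[F]$-surjection average into the $R$-module moment of $\mu_{g,q}$, and noting via Corollary \ref{cor:muR formula special case} that $c_{R}=\prod_{i}\eta(\mathbb{F}_{i})>1/2$) is exactly what the paper leaves implicit, so you have filled in rather than deviated from the argument.
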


\begin{proof}
Fix $G\in S_{R}$.  The proof of Theorem 1.1 in \cite{lipnowskitsimerman}
shows that there exists $N$ such that if $g,q>N$ and $l\nmid P\left(q\right)$
then $\left|\int_{S_{R}}\left|\mathrm{Surj}\left(M,G\right)\right|d\mu_{g,q}-1\right|<\epsilon$.
Hence the by Theorem \ref{thm:conv of moments gives measure} $\lim_{g,q\longrightarrow\infty}\mu_{g,q}=\nu$
for some probability measure $\nu$ satisfying $\int_{S_{R}}\left|\mathrm{Surj}\left(M,G\right)\right|d\nu=1$
for all $G\in S_{R}$. By Lemma \ref{lem:mu_R unique} $\nu=\mu_{R}$.
\end{proof}
\begin{rem}
The version of Theorem \ref{thm:cokernel density} for abelian groups
proved in \cite{EVW} is stated using iterated limits, by first taking
the limsup and liminf in $g$ and then taking the limit in $q$. That
version is implied by the one above.
\end{rem}

Theorem \ref{thm:cokernel density} proves a more general form of
Conjecture 1.1 from \cite{lipnowskitsimerman} which applies to $\mathrm{Jac}\left(C\right)\left[P\left(F\right)\right]$
for any $P\left(X\right)\in\mathbb{Z}_{l}\left[X\right]$ in the case
$l\nmid P\left(q\right)$ (which implies $\land^{2}G=0$ for all $\mathbb{Z}_{l}\left[F\right]/\left\langle P\left(F\right)\right\rangle $-modules
$G$), up to the limit in $q$.

For the applications in the next section we need a different interpretation
of the measures $\mu_{g,q}$. Define
\[
\mu_{g,q}^{\prime}\left(M\right)=\frac{\left|\left\{ C\in\mathcal{M}_{g}\left(\mathbb{F}_{q}\right)\mid\mathrm{Jac}\left(C\right)\left[P\left(F\right)\right]\cong M\right\} \right|}{\left|\mathcal{M}_{g}\left(\mathbb{F}_{q}\right)\right|}.
\]

We have the following lemma relating the measures $\mu'_{g,q}$ and
$\mu_{g,q}$. Though we prove it under the assumption that the roots
of $P\left(F\right)$ are not Weil-$q$ numbers, we expect it is true
in general.
\begin{lem}
\label{lem:mu' equals mu}Suppose the roots of $P\left(X\right)\in\mathbb{Z}_{l}\left[X\right]$
are not Weil $q$-numbers. Then for any $C\in\mathcal{M}_{g}\left(\mathbb{F}_{q}\right)$
\[
\mathrm{Jac}\left(C\right)\left[P\left(F\right)\right]\cong T_{l}\left(\mathrm{Jac}\left(C\right)\right)/\left\langle P\left(F\right)\right\rangle .
\]
In particular, for all $g,q$ we have $\mu_{g,q}^{\prime}=\mu_{g,q}$.
\end{lem}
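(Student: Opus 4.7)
The plan is to derive the isomorphism from a snake lemma applied to multiplication by $P(F)$ on the short exact sequence of Tate modules, using the Weil $q$-number hypothesis to ensure $P(F)$ is invertible after inverting $l$. The equality $\mu_{g,q}^{\prime}=\mu_{g,q}$ then follows by applying the isomorphism pointwise on $\mathcal{M}_g(\mathbb{F}_q)$.

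First I would set $T = T_l(\mathrm{Jac}(C))$ and $V = T \otimes_{\mathbb{Z}_l}\mathbb{Q}_l$, and note that by Deligne's theorem the eigenvalues of $F$ on $V \otimes_{\mathbb{Q}_l}\overline{\mathbb{Q}_l}$ are Weil $q$-numbers. By hypothesis no root of $P$ is a Weil $q$-number, so the characteristic polynomial of $F$ is coprime to $P(X)$ in $\overline{\mathbb{Q}_l}[X]$; consequently $P(F)$ is a $\mathbb{Q}_l$-linear automorphism of $V$.

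Next I would apply the snake lemma to the commutative diagram
\[
\begin{array}{ccccccccc}
0 & \longrightarrow & T & \longrightarrow & V & \longrightarrow & V/T & \longrightarrow & 0 \\
 & & \downarrow & & \downarrow & & \downarrow & & \\
0 & \longrightarrow & T & \longrightarrow & V & \longrightarrow & V/T & \longrightarrow & 0
\end{array}
\]
whose vertical arrows are all multiplication by $P(F)$. Since the middle vertical arrow has trivial kernel and cokernel, the resulting six-term sequence collapses to an isomorphism of $\mathbb{Z}_l[F]$-modules
\[
\ker\bigl(P(F)\mid V/T\bigr) \;\xrightarrow{\ \sim\ }\; T/\langle P(F)\rangle T,
\]
the $F$-equivariance being automatic since $F$ commutes with $P(F)$.

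Finally I would identify $V/T$ with the $l$-primary torsion $\mathrm{Jac}(C)[l^\infty]$ in the standard way, via the exact sequence $0 \longrightarrow T \longrightarrow V \longrightarrow \mathrm{Jac}(C)[l^\infty] \longrightarrow 0$, which is compatible with the full Galois action and in particular with $F$. Under this identification the left-hand side of the displayed isomorphism is exactly $\mathrm{Jac}(C)[P(F)]$, giving the first assertion. The equality $\mu_{g,q}^{\prime}=\mu_{g,q}$ follows because the isomorphism holds for every $C \in \mathcal{M}_g(\mathbb{F}_q)$, so the two measures agree on each isomorphism class in $S_R$. The only non-routine input is the invertibility of $P(F)$ on $V$, which is immediate from Deligne's Weil bound combined with the hypothesis; everything else is a clean diagram chase.
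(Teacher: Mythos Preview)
Your proof is correct and follows essentially the same idea as the paper's: apply the snake lemma to multiplication by $P(F)$, using the Weil $q$-number hypothesis to ensure $P(F)$ has trivial kernel on the Tate module. The only difference is cosmetic---the paper applies the snake lemma to the finite-level sequence $0 \to T \xrightarrow{\,l^n\,} T \to \mathrm{Jac}(C)[l^n] \to 0$ and then takes $n$ large, whereas you pass directly to the limit sequence $0 \to T \to V \to V/T \to 0$, which is slightly cleaner since the invertibility of $P(F)$ on $V$ kills both middle kernel and cokernel at once.
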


\begin{proof}
Consider the exact sequence 
\[
0\longrightarrow T_{l}\left(\mathrm{Jac}\left(C\right)\right)\longrightarrow T_{l}\left(\mathrm{Jac}\left(C\right)\right)\longrightarrow\mathrm{Jac}\left(C\right)\left[l^{n}\right]\longrightarrow0
\]
where the first map is multiplication by $l^{n}$. Apply the snake
lemma to the diagram consisting of two rows being the above sequence
and the vertical maps being multiplication by $P\left(F\right)$.

We claim $\ker P\left(F\right)=0$ for $P\left(F\right)$ acting on
$T_{l}\left(\mathrm{Jac}\left(C\right)\right)$. Suppose $\ker P\left(F\right)\neq0$.
Then $\left(F-\lambda\right)x=0$ for some $x\in T_{l}\left(\mathrm{Jac}\left(C\right)\right)\otimes\overline{\mathbb{Q}_{l}}$
and $\lambda\in\overline{\mathbb{Q}_{l}}$ some root of $P\left(F\right)$.
This implies $\lambda$ is an eigenvalue of $F$, contradicting that
$\lambda$ is not a Weil $q$-number. Thus the connecting homomorphism
is injective.

We can let $n$ be large enough such that $l^{n}T_{l}\left(\mathrm{Jac}\left(C\right)\right)\subset P\left(F\right)T_{l}\left(\mathrm{Jac}\left(C\right)\right)$,
making the connecting homomorphism surjective. We can also make $n$
large enough such that $\mathrm{Jac}\left(C\right)\left[l^{n}\right]\left[P\left(F\right)\right]=\mathrm{Jac}\left(C\right)\left[P\left(F\right)\right]$

Thus the connecting homomorphism induces an isomorphism between $\mathrm{Jac}\left(C\right)\left[l^{n}\right]\left[P\left(F\right)\right]$
and $T_{l}\left(\mathrm{Jac}\left(C\right)\right)/\left\langle P\left(F\right)\right\rangle $.
\end{proof}

\section{\label{sec:Densities-of-eigenspaces}Densities of eigenspaces of
frobenius}

As in Section \ref{sec:Jacobians-of-hyperelliptic} let $R=\mathbb{Z}_{l}\left[X\right]/\left\langle P\left(X\right)\right\rangle $
and view $\mathrm{Jac}\left(C\right)\left[P\left(F\right)\right]$
as an $R$-module with $X$ acting as $F$.

\subsection{Independence of torsion subgroups of $\mathrm{Jac}\left(C\right)$}
\begin{lem}
\label{lem:factor mu}Let $P\left(X\right)=\prod_{i=1}^{s}P_{i}\left(X\right)$
where $P_{i}\left(X\right)\in\mathbb{Z}_{l}\left[X\right]$ are irreducible.
Suppose the $P_{i}\left(X\right)$ are coprime modulo $l$. Let $R_{i}=\mathbb{Z}_{l}\left[X\right]/\left(P_{i}\left(X\right)\right)$.
For any $M\in S_{R}$ we have 
\[
\mu_{R}\left(M\right)=\prod_{i=1}^{s}\mu_{R_{i}}\left(M\otimes_{R}R_{i}\right).
\]
\end{lem}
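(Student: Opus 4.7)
The plan is to first establish a ring decomposition $R \cong \prod_{i=1}^s R_i$ from the pairwise coprimality of the $P_i$ modulo $l$, then observe that both the Haar measure on $\mathrm{End}_R(R^n)$ and the cokernel operation respect this product structure, so the finite-$n$ measures factor and the claim follows by taking the limit.

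For the ring decomposition, I consider the natural map $\varphi \colon R \to \prod_i R_i$ induced by the projections $R \to R_i$, well-defined because each $P_i$ divides $P$. After replacing each $P_i$ by a unit multiple so that it is monic (which does not change the ideal it generates in $\mathbb{Z}_l[X]$), both $R$ and $\prod_i R_i$ are free $\mathbb{Z}_l$-modules of rank $\deg P = \sum_i \deg P_i$. The reduction $\varphi \bmod l$ is the standard CRT isomorphism $\mathbb{F}_l[X]/(\bar P) \cong \prod_i \mathbb{F}_l[X]/(\bar P_i)$ guaranteed by coprimality of the $\bar P_i$; Nakayama's lemma then gives surjectivity of $\varphi$, and equality of $\mathbb{Z}_l$-ranks upgrades this to an isomorphism.

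Once $R \cong \prod_i R_i$, every finite $R$-module $M$ decomposes canonically as $M \cong \bigoplus_i M_i$ with $M_i = e_i M = M \otimes_R R_i$, where the $e_i$ are the orthogonal idempotents of the decomposition. In particular $R^n \cong \prod_i R_i^n$ as $R$-modules, and this induces a topological isomorphism $\mathrm{End}_R(R^n) \cong \prod_i \mathrm{End}_{R_i}(R_i^n)$ of compact $\mathbb{Z}_l$-modules. Since the Haar measure on a product of compact abelian groups is uniquely the product of Haar measures, this isomorphism sends $\mu_{\mathrm{haar},n}$ to the product of the corresponding Haar measures on the factors. A Haar-random endomorphism $\varphi$ thus corresponds to an independent tuple $(\varphi_i)$ with each $\varphi_i$ Haar-random on $\mathrm{End}_{R_i}(R_i^n)$, and $\mathrm{coker}(\varphi) \cong \bigoplus_i \mathrm{coker}(\varphi_i)$. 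Therefore
\[
\mu_{R,n}(M) \;=\; \prod_{i=1}^s \mu_{R_i,n}(M_i),
\]
and passing to the limit $n \to \infty$ yields $\mu_R(M) = \prod_{i=1}^s \mu_{R_i}(M \otimes_R R_i)$.

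The main obstacle is the first step: coprimality modulo $l$ is a priori weaker than coprimality in $\mathbb{Z}_l[X]$ (which would give the CRT identity directly), so one has to lift it through the combination of a Nakayama surjectivity argument and a rank count. After the ring decomposition is in hand, the remainder is essentially a bookkeeping exercise using that the Haar measure, the standard module decomposition coming from orthogonal idempotents, and the cokernel functor all factor compatibly across the product.
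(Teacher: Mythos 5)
Your proof is correct, and it takes a genuinely different route from the paper's. The paper's argument never touches the finite-$n$ matrix model: it invokes the explicit formula $\mu_R(M) = c_R/\lvert\mathrm{Aut}_R(M)\rvert$ from Theorem \ref{thm:The-measure-} together with Lemma \ref{lem:ring condition} (applicable because $\mathbb{Z}_l \subset R, R_i$), shows $\mathrm{Hom}_R(M_i, M_j) = 0$ for $i \neq j$ (since $P_i$ acts as zero on $M_i$ but as a unit on the local ring $R_j$) so that $\mathrm{Aut}_R(M) = \prod_i \mathrm{Aut}_{R_i}(M_i)$, and then verifies $c_R = \prod_i c_{R_i}$ by matching residue fields. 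You instead push the product structure through the random-matrix definition of $\mu_R$ directly: the ring splitting $R \cong \prod_i R_i$ gives $\mathrm{End}_R(R^n) \cong \prod_i \mathrm{End}_{R_i}(R_i^n)$, the Haar measure on this product of compact groups is the product of the Haar measures, cokernel commutes with the decomposition, and the finite-level identity $\mu_{R,n}(M) = \prod_i \mu_{R_i,n}(M_i)$ follows and passes to the limit. Your approach is more elementary in that it does not require the closed-form expression for $\mu_R$, and it makes the probabilistic independence of the factors transparent; the paper's approach is shorter given that the explicit formula has already been established. One small remark: your Nakayama-plus-rank-count argument for $R \cong \prod_i R_i$ is more careful than the paper's one-line assertion, and correctly isolates the only delicate point, namely that coprimality mod $l$ (rather than coprimality in $\mathbb{Z}_l[X]$) suffices; the monicity caveat you flag is also implicit in the paper's framework, since $\mu_{R_i}$ is only defined when $R_i$ is a finite $\mathbb{Z}_l$-algebra.
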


\begin{proof}
Note $R\cong\prod_{i=1}^{s}R_{i}$ since the $P_{i}\left(X\right)$
are coprime modulo $l$. Let $M_{i}=M\otimes_{R}R_{i}$. Then $M\cong\oplus_{i=1}^{s}M_{i}$.
Let $m_{i}=\left\langle l,P_{i}\left(X\right)\right\rangle \subset\mathbb{Z}_{l}\left[X\right]$.
Then $m_{i}R_{i}$ is the unique maximal ideal of $R_{i}$. Since
the $P_{i}\left(X\right)$ are not coprime modulo $l$ we see that
$P_{i}\left(X\right)\notin m_{j}$ for $i\neq j$. This implies $\mathrm{Hom}_{R}\left(M_{i},M_{j}\right)=0$
for $i\neq j$.

Hence $\mathrm{End}_{R}\left(M\right)=\prod_{i=1}^{s}\mathrm{End}\left(M_{i}\right)$
and so $\mathrm{Aut}_{R}\left(M\right)=\prod_{i=1}^{s}\mathrm{Aut}\left(M_{i}\right)$.
The maximal ideals of $R$ are $m_{i}R$ so $\mathbb{F}_{i}=\mathbb{F}_{l}$.
Recall the constants $c_{R_{i}},c_{R}$ given by Theorem \ref{thm:The-measure-}
and Lemma \ref{lem:ring condition} (since $\mathbb{Z}_{l}$ is contained
in $R_{i}$ and $R$). We have 
\begin{align*}
c_{R} & =\lim_{n\longrightarrow\infty}\prod_{i=1}^{s}\left|\mathrm{GL}_{n}\left(\mathbb{F}_{l}\right)\right|/\left|M_{n}\left(\mathbb{F}_{l}\right)\right|\\
 & =\prod_{i=1}^{s}c_{R_{i}}
\end{align*}
 and the result follows.
\end{proof}
We have the following consequence of Theorem \ref{thm:cokernel density}
which says that any finitely many given eigenspaces of $\mathrm{Frob}$
are distributed independently in the $\left(g,q\right)$ limit.
\begin{cor}
\label{cor:indep eigenspaces}Let $\epsilon>0$. Let $P_{i}\left(X\right)\in\mathbb{Z}_{l}\left[X\right]$
be irreducible, pairwise coprime modulo $l$ and let $n_{i}\in\mathbb{Z}$
for $i=1,\ldots,s$. Suppose the roots of $P\left(X\right)$ are not
Weil $q$-numbers and assume $\prod_{i=1}^{s}\eta\left(\mathbb{F}_{i}\right)>1/2$.
Suppose $M\in S_{R}$ and $M\cong\oplus_{i=1}^{s}M_{i}$. There exists
$N$ such that if $g,q>N$ then 
\[
\left|\mathrm{Prob}\left(\mathrm{Jac}\left(C\right)\left[\prod_{i=1}^{s}P_{i}\left(F\right)^{n_{i}}\right]\cong M\right)-\prod_{i=1}^{s}\mathrm{Prob}\left(\mathrm{Jac}\left(C\right)\left[P_{i}\left(F\right)^{n_{i}}\right]\cong M_{i}\right)\right|<\epsilon.
\]
\end{cor}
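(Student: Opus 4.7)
The plan is to assemble the corollary from three ingredients already in place: Theorem \ref{thm:cokernel density}, Lemma \ref{lem:mu' equals mu}, and Lemma \ref{lem:factor mu} (the latter applied after a small extension from irreducibles to prime powers). First I would set up notation: let $\hat{P}(X) = \prod_{i=1}^{s} P_{i}(X)^{n_{i}}$ and $R = \mathbb{Z}_{l}[X]/(\hat{P}(X))$. Since the $P_{i}$ are pairwise coprime modulo $l$, so are the $P_{i}^{n_{i}}$, and the Chinese Remainder Theorem applied to the complete semi-local ring $R$ yields $R \cong \prod_{i=1}^{s} R_{i}$ with $R_{i} = \mathbb{Z}_{l}[X]/(P_{i}(X)^{n_{i}})$. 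Any $M \in S_{R}$ then decomposes canonically as $M = \bigoplus_{i} M_{i}$ with $M_{i} = M \otimes_{R} R_{i}$. The hypothesis that the roots of $\hat{P}$ are not Weil $q$-numbers transfers to each $P_{i}^{n_{i}}$ since their root sets are subsets, and $\eta(\mathbb{F}_{i}) \ge \prod_{j} \eta(\mathbb{F}_{j}) > 1/2$ for every $i$, so the hypotheses of Theorem \ref{thm:cokernel density} apply simultaneously to $R$ and to every $R_{i}$.

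Second, I would apply Theorem \ref{thm:cokernel density} together with Lemma \ref{lem:mu' equals mu} at each level. For fixed $M$ (and correspondingly each $M_{i}$), weak convergence on the discrete space $S_{R}$ gives pointwise convergence
\[
\mathrm{Prob}\bigl(\mathrm{Jac}(C)[\hat{P}(F)] \cong M\bigr) \longrightarrow \mu_{R}(M), \qquad \mathrm{Prob}\bigl(\mathrm{Jac}(C)[P_{i}(F)^{n_{i}}] \cong M_{i}\bigr) \longrightarrow \mu_{R_{i}}(M_{i}),
\]
as $g,q \to \infty$ along any sequence with $l \nmid \hat{P}(q)$.

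Third, I would invoke a version of Lemma \ref{lem:factor mu} to conclude $\mu_{R}(M) = \prod_{i} \mu_{R_{i}}(M_{i})$. The proof of that lemma uses only the ring decomposition $R \cong \prod R_{i}$, the vanishing $\mathrm{Hom}_{R}(M_{i},M_{j}) = 0$ for $i \neq j$ (since their supports are distinct maximal ideals), the resulting product decomposition of $\mathrm{Aut}_{R}(M)$, and the explicit product form of the constant from Theorem \ref{thm:The-measure-} together with Lemma \ref{lem:cR formula}. Irreducibility of $P_{i}$ is used only to produce the CRT decomposition, which we have already secured for the powers $P_{i}^{n_{i}}$; the rest of the argument carries over verbatim. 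With the factorization in hand, a routine triangle inequality finishes the proof: choose $N$ so that for $g,q > N$ each of the $s+1$ probabilities above is within $\epsilon/(2(s+1))$ of its limit, and use that a product of $s$ quantities in $[0,1]$ perturbed by at most $\epsilon/(2(s+1))$ each differs from the product of their limits by at most $\epsilon/2$.

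There is no real obstacle here: the corollary is a packaging of earlier results. The only mildly delicate point is verifying that Lemma \ref{lem:factor mu} generalizes from irreducible $P_{i}$ to prime powers $P_{i}^{n_{i}}$, and this is a formal consequence of the CRT decomposition of the semi-local ring $R$ together with the explicit formula of Theorem \ref{thm:The-measure-}.
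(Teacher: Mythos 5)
Your proposal is correct and follows the paper's own route: invoke Theorem \ref{thm:cokernel density} together with Lemma \ref{lem:mu' equals mu} to turn the probabilities into $\mu_R(M)$ and $\mu_{R_i}(M_i)$, and then use the factorization $\mu_R(M)=\prod_i \mu_{R_i}(M_i)$ from Lemma \ref{lem:factor mu}. The paper simply writes ``follows immediately'' without flagging that Lemma \ref{lem:factor mu} is stated for irreducible $P_i$ rather than prime powers $P_i^{n_i}$; your explicit observation that the lemma's proof carries over unchanged (CRT from coprimality mod $l$ plus Nakayama, vanishing of cross-$\mathrm{Hom}$'s since the supports are distinct maximal ideals, and the multiplicativity of $c_R$ via Corollary \ref{cor:muR formula special case}) is a useful clarification, not a deviation in method.
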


\begin{proof}
Follows immediately from Theorem \ref{thm:cokernel density} and Lemma
\ref{lem:factor mu}.
\end{proof}

\subsection{Densities of divisors of the characteristic polynomial of Frobenius}

We can apply Theorem \ref{thm:cokernel density} to obtain densities
for the characteristic polynomial of the Frobenius acting on the Tate
module of the Jacobian of hyperelliptic curves.

Let $C\in\mathcal{M}_{g}\left(\mathbb{F}_{q}\right)$. Let $P_{C}\left(X\right)$
be the characteristic polynomial of $F$ acting on $\mathrm{Jac}\left(C\right)\left[l\right]$.

We can use the measure $\mu_{R}$ to detect how often $P_{i}\left(X\right)^{m}\mid\mid P_{C}\left(X\right)$.
\begin{prop}
\label{prop:char poly prob}Let $P\left(X\right)=\prod_{i=1}^{s}P_{i}\left(X\right)^{m_{i}}$
where each $P_{i}\left(X\right)\in\mathbb{F}_{l}\left[X\right]$ is
an irreducible polynomial. Let $R_{i}=\mathbb{F}_{l}\left[X\right]/\left(P_{i}\left(X\right)^{m_{i}+1}\right)$
and $\mathbb{F}_{i}=R_{i}/\left(P_{i}\left(X\right)\right)$. Assume
$\prod_{i=1}^{s}\eta\left(\mathbb{F}_{i}\right)>1/2$. There exists
$N$ such that if $g,q>N$
\[
\left|\mathrm{Prob}\left(P\left(X\right)\mid\mid P_{C}\left(X\right)\right)-\prod_{i=1}^{s}\sum_{\substack{M\in S_{R_{i}}\\
\dim_{\mathbb{F}_{l}}M=m_{i}
}
}\mu_{R_{i}}\left(M\right)\right|<\epsilon.
\]
\end{prop}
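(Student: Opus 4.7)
The plan is to deduce the proposition from Theorem~\ref{thm:cokernel density} applied to a $\mathbb{Z}_l$-lift of $\prod_i P_i^{m_i+1}$, together with a reduction modulo $l$ and a Chinese Remainder factorization. Fix monic lifts $\tilde P_i(X)\in\mathbb{Z}_l[X]$ of $P_i$; after perturbing by $l\mathbb{Z}_l[X]$ if necessary, we may assume that none of the roots of $\prod_i\tilde P_i$ in $\overline{\mathbb{Q}_l}$ is a Weil $q$-number, the set of such numbers being countable for fixed $q$. Set $Q(X)=\prod_i\tilde P_i(X)^{m_i+1}$ and $\tilde R=\mathbb{Z}_l[X]/(Q(X))$. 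The maximal ideals of $\tilde R$ are $(l,P_i)$ with residue fields $\mathbb{F}_i$, so the hypothesis $\prod_i\eta(\mathbb{F}_i)>1/2$ is exactly what Theorem~\ref{thm:cokernel density} requires. That theorem, combined with Lemma~\ref{lem:mu' equals mu}, yields that the distribution of $\mathrm{Jac}(C)[Q(F)]$ as an $\tilde R$-module converges weakly to $\mu_{\tilde R}$ along any sequence with $l\nmid P(q)$.

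Tensoring the short exact sequence $0\to T_l\xrightarrow{Q(F)}T_l\to\mathrm{Jac}(C)[Q(F)]\to 0$ with $\mathbb{F}_l$ yields an isomorphism of $R$-modules
\[
\mathrm{Jac}(C)[Q(F)]\big/l\,\mathrm{Jac}(C)[Q(F)]\;\cong\;\mathrm{Jac}(C)[l]\big/Q(F)\,\mathrm{Jac}(C)[l],
\]
where $R:=\tilde R/l\tilde R\cong\mathbb{F}_l[X]/\bigl(\prod_i P_i^{m_i+1}\bigr)$. By the proof of Lemma~\ref{lem:prob reduced mod p}, the pushforward of $\mu_{\tilde R}$ under $M\mapsto M/lM$ equals $\mu_R$, so the distribution of this $R$-module converges to $\mu_R$. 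Since the $P_i$ are pairwise coprime in $\mathbb{F}_l[X]$, the Chinese Remainder Theorem gives $R\cong\prod_i R_i$, and the argument of Lemma~\ref{lem:factor mu} carries over verbatim to yield $\mu_R=\prod_i\mu_{R_i}$ on $\prod_i S_{R_i}$. Decomposing $\mathrm{Jac}(C)[l]$ into its $F$-primary components and observing that $Q(F)$ acts as a unit on any primary component whose associated irreducible factor lies outside $\{P_1,\dots,P_s\}$, the quotient above reduces to $\bigoplus_i V_i/P_i(F)^{m_i+1}V_i$, with $V_i$ the $P_i$-primary component of $\mathrm{Jac}(C)[l]$ (taken to be $0$ if $P_i\nmid P_C$).

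It remains to relate the event $P\mid\mid P_C$ to the dimension condition appearing in the sum. Writing $V_i\cong\bigoplus_s\mathbb{F}_l[X]/(P_i^{a_s})$ with $\sum_s a_s=k_i$, where $k_i$ is the multiplicity of $P_i$ in $P_C$, an elementary computation yields
\[
\dim_{\mathbb{F}_l}V_i/P_i^{m_i+1}V_i\;=\;\deg P_i\cdot\sum_s\min(a_s,m_i+1),
\]
and a short case analysis (splitting on whether $k_i\le m_i+1$ or $k_i>m_i+1$) shows that this equals $m_i\deg P_i$ precisely when $k_i=m_i$, i.e.\ $P_i^{m_i}\mid\mid P_C$. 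Passing to the weak limit in the $(g,q)$-limit then gives the claimed formula. The main technical point is this last dimension calculation: the exponent $m_i+1$ rather than $m_i$ in the definition of $R_i$ is what allows the exact divisibility $P_i^{m_i}\mid\mid P_C$ to be characterized by an $R_i$-module invariant, and the countability of Weil $q$-numbers is what lets us freely choose the lift $\tilde P_i$ for which Lemma~\ref{lem:mu' equals mu} applies.
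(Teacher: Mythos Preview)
Your argument is correct and essentially identical to the paper's: lift the $P_i$ to $\mathbb{Z}_l[X]$ avoiding Weil $q$-numbers, apply Theorem~\ref{thm:cokernel density} together with Lemma~\ref{lem:mu' equals mu}, reduce modulo $l$ (Lemma~\ref{lem:prob reduced mod p}), factor via CRT (Lemma~\ref{lem:factor mu}), and characterize $P_i^{m_i}\mid\mid P_C$ through the structure theorem for $\mathbb{F}_l[X]$-modules on $\mathrm{Jac}(C)[l]$. The only cosmetic differences are that the paper factors over $\mathbb{Z}_l$ before reducing mod $l$ while you do the reverse, and the paper states the divisibility criterion via $\dim_{\mathbb{F}_l}\ker P_i^{m_i+1}$ rather than your $\dim_{\mathbb{F}_l}V_i/P_i^{m_i+1}V_i$; these coincide since the modules in question are finite.
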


\begin{proof}
For any curve $C\in\mathcal{M}_{g}$ $\mathrm{Jac}\left(C\right)\left[l\right]$
is an $\mathbb{F}_{l}\left[X\right]$-module. Since $\mathbb{F}_{l}\left[X\right]$
is a PID we can apply the structure theorem. For any $M\in S_{\mathbb{F}_{l}\left[X\right]}$
and any irreducible $Q\left(X\right)\in\mathbb{F}_{l}\left[X\right]$
let $E_{Q}\left(M\right)=M\left[Q\left(X\right)^{\infty}\right]$.
Let $\mathbb{F}=\mathbb{F}_{l}\left[X\right]/\left(Q\left(X\right)\right)$
be the residue field. Then $\dim_{\mathbb{F}_{l}}E_{Q}\left(M\right)$
is the exact power of $Q\left(X\right)$ dividing the characteristic
polynomial of $F$ acting on $M$ and it is easy to see that $\dim_{\mathbb{F}_{l}}E_{Q}\left(M\right)=m$
if and only if $\dim_{\mathbb{F}_{l}}\ker\left(Q\left(X\right)^{m+1}\right)=m$
(note $m$ has to be divisible by $\left[\mathbb{F}:\mathbb{F}_{l}\right]$).

For each $i$ let $\widehat{P_{i}}\left(X\right)$ be a lift of $P_{i}\left(X\right)$
to $\mathbb{Z}_{l}\left[X\right]$ chosen such that the roots of $P_{i}'\left(X\right)$
are not Weil $q$-numbers. Let $R=\mathbb{Z}_{l}\left[X\right]/\left(\prod_{i=1}^{s}\widehat{P_{i}}\left(X\right)^{m_{i}+1}\right)$.
For any $M\in S_{R}$ we denote $M_{i}=M\otimes_{R}R_{i}$. Thus
\begin{align*}
 & \mathrm{Prob}\left(\dim_{\mathbb{F}_{l}}E_{P_{i}}\left(\mathrm{Jac}\left(C\right)\left[l\right]\right)=m_{i}\ \forall i\right)\\
 & =\sum_{\substack{M\in S_{R}\\
\dim_{\mathbb{F}_{l}}M\left[l\right]\left[\widehat{P_{i}}\left(X\right)^{m_{i}+1}\right]=m_{i}\ \forall i
}
}\mathrm{Prob}\left(\mathrm{Jac}\left(C\right)\left[\prod_{i=1}^{s}\widehat{P_{i}}\left(X\right)^{m_{i}+1}\right]\cong M\right)\\
 & =\sum_{\substack{M\in S_{R}\\
\dim_{\mathbb{F}_{l}}M_{i}=m_{i}\ \forall i
}
}\mu_{R,g,q}^{\prime}\left(M\right)
\end{align*}
where in the last line we change the condition in the sum using $\dim_{\mathbb{F}_{l}}M\left[l\right]\left[\widehat{P_{i}}\left(X\right)^{m_{i}+1}\right]=\dim_{\mathbb{F}_{l}}M_{i}$.
This follows from considering the exact sequences obtained from multiplication
by $l$ and by $\widehat{P_{i}}\left(X\right)^{m_{i}+1}$ using that
$M$ is finite to get
\begin{align*}
\left|M\left[l\right]\left[\widehat{P_{i}}\left(X\right)^{m_{i}+1}\right]\right| & =\left|M\left[\widehat{P_{i}}\left(X\right)^{m_{i}+1}\right]/\left(l\right)\right|\\
 & =\left|M/\left(\widehat{P_{i}}\left(X\right)^{m_{i}+1},l\right)\right|.
\end{align*}

Let $T_{m_{i}}=\left\{ M\in S_{R}\mid\dim_{\mathbb{F}_{l}}M_{i}=m_{i}\right\} $.
Then by Theorem \ref{thm:cokernel density} for any $\epsilon>0$
there exists $N$ such that if $g,q>N$ then $\left|\int_{\cap_{i}T_{m_{i}}}d\mu_{R,g,q}^{\prime}-\int_{\cap_{i}T_{m_{i}}}d\mu_{R}\right|<\epsilon.$

Let $\widehat{R_{i}}=\mathbb{Z}_{l}\left[X\right]/\left(P_{i}\left(X\right)^{m_{i}+1}\right)$.
Then it follows from Lemma \ref{lem:factor mu} that

\begin{align*}
\sum_{\substack{M\in S_{R}\\
\dim_{\mathbb{F}_{l}}M_{i}=m_{i}
}
}\mu_{R}\left(M\right) & =\sum_{\substack{M\in S_{R}\\
\dim_{\mathbb{F}_{l}}M_{i}=m_{i}
}
}\prod_{i=1}^{s}\mu_{\widehat{R_{i}}}\left(M\otimes_{R}R_{i}\right)\\
 & =\prod_{i=1}^{s}\sum_{\substack{M\in S_{\widehat{R_{i}}}\\
\dim_{\mathbb{F}_{l}}M/lM=m_{i}
}
}\mu_{\widehat{R_{i}}}\left(M\right).
\end{align*}
Then the result follows by Lemma \ref{lem:prob reduced mod p}.
\end{proof}
\begin{rem}
\label{rem:consequences for char poly}This lemma shows that if we
fix any $P_{i}\left(X\right)\in\mathbb{Z}_{l}\left[X\right]$ for
$i=1,\ldots,s$ irreducible and let $\nu$ be the measure on $\mathbb{Z}_{\ge0}^{s}$
given by 
\[
\nu\left(m_{1},\ldots m_{s}\right)=\sum_{\substack{M\in S_{R}\\
\dim_{\mathbb{F}_{i}}M_{i}=m_{i}
}
}\mu_{R}\left(M\right)
\]
 then $\mathrm{Prob}\left(\prod_{i=1}^{s}P_{i}\left(X\right)^{m_{i}}\mid\mid P_{C}\left(X\right)\right)$
approaches $\nu$ in the $g,q$ limit. Note the $m_{i}$ are allowed
to be 0, that is we can exclude any finite set of divisors from $P_{C}\left(X\right)$.

It is clear that $\mathrm{Prob}\left(P\left(X\right)=P_{C}\left(X\right)\right)=0$
for any fixed $P\left(X\right)\in\mathbb{Z}_{l}\left[X\right]$ since
the characteristic polynomial determines the $\mathbb{F}_{l}\left[X\right]$-module
$\mathrm{Jac}C\left[l\right]$ up to a finite set of possibilities,
and the density of each of these is 0.

A question which is not answered by the above is: given any infinite
set of polynomials $\left\{ P_{i}\left(X\right)\right\} _{i\in\mathcal{I}}$
and $m_{i}\ge0$, what is $\mathrm{Prob}\left(\cup_{i\in\mathcal{I}}\left\{ P_{i}\left(X\right)^{m_{i}}\mid\mid P_{C}\left(X\right)\right\} \right)$?\\
\end{rem}

We now focus on explicitly computing each factor in Proposition \ref{prop:char poly prob}.
Until further notice let $R=\mathbb{F}_{l}\left[X\right]/\left(P\left(X\right)^{m+1}\right)$
for $P\left(X\right)\in\mathbb{F}_{l}\left[X\right]$ irreducible.
For $M\in S_{R}$ let $\mathrm{rk}_{l}M=\dim_{\mathbb{F}_{l}}M$.
We want to compute 
\[
\sum_{\substack{M\in S_{R}\\
\mathrm{rk}_{l}M=m
}
}\mu_{R}\left(M\right)=\mu_{R}\left(\left\{ M\in S_{R}\mid\mathrm{rk}_{l}M=m\right\} \right)
\]
 for all $m$ (note $R$ also depends on $m$).
\begin{lem}
\label{lem:rank prob polynomials}We have 
\[
\mu_{R}\left(\left\{ M\in S_{R}\mid\mathrm{rk}_{l}M=m\right\} \right)=\eta\left(\mathbb{F}\right)\sum_{\substack{M\in S_{R}\\
\mathrm{rk}_{l}M=m
}
}\frac{1}{\left|\mathrm{Aut}_{R}\left(M\right)\right|}
\]
\end{lem}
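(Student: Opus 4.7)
The plan is to deduce the identity directly from the formula for $\mu_R$ in Theorem \ref{thm:The-measure-} by verifying that every $M \in S_R$ with $\mathrm{rk}_l M = m$ lies in the stratum $T_{R,0}$, so that the normalizing constant collapses to $c_{R,0} = \eta(\mathbb{F})$ via Lemma \ref{lem:cR formula}.

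First I would invoke the structure theorem: since $R = \mathbb{F}_l[X]/(P(X)^{m+1})$ is a quotient of the PID $\mathbb{F}_l[X]$, any $M \in S_R$ decomposes as $M \cong \bigoplus_{i=1}^{r} R/(P(X))^{n_i}$ with $1 \le n_i \le m+1$. Write $\mathfrak{m} = (P(X))$ and $\mathbb{F} = R/\mathfrak{m}$. Then $d_\mathfrak{m}(R/(P(X))^n)$ can be computed from the periodic minimal free resolution
\[
\cdots \longrightarrow R \xrightarrow{\;P(X)^{m+1-n}\;} R \xrightarrow{\;P(X)^n\;} R \longrightarrow R/(P(X))^n \longrightarrow 0.
\]
When $1 \le n \le m$, both differentials vanish after tensoring with $\mathbb{F}$, so $\dim_\mathbb{F}(M \otimes_R \mathbb{F}) = \dim_\mathbb{F}\mathrm{Tor}_R^1(M,\mathbb{F}) = 1$ and $d_\mathfrak{m}(R/(P(X))^n) = 0$. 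When $n = m+1$ the module $R/(P(X))^{m+1} = R$ is free and $d_\mathfrak{m}(R) = 1$. By additivity of $d_\mathfrak{m}$ on direct sums, $d_\mathfrak{m}(M)$ equals the number of free summands in the decomposition.

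Now any free summand contributes at least $(m+1)[\mathbb{F}:\mathbb{F}_l] \ge m+1$ to $\mathrm{rk}_l M$, so the hypothesis $\mathrm{rk}_l M = m$ forces the absence of free summands and hence $d_\mathfrak{m}(M) = 0$, i.e.\ $M \in T_{R,0}$. Theorem \ref{thm:The-measure-} then gives $\mu_R(M) = c_{R,0}/|\mathrm{Aut}_R(M)|$, and Lemma \ref{lem:cR formula} (specialized to $s = 1$, $j = 0$, with empty product in the denominator equal to $1$) evaluates $c_{R,0} = \eta(\mathbb{F})$. Summing this formula over all $M \in S_R$ with $\mathrm{rk}_l M = m$ yields the claim. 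The only delicate point is the rank inequality ruling out free summands, but this is an immediate dimension count.
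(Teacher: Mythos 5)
Your proof is correct. The key observation is the same in both arguments: since every $M\in S_R$ decomposes as $\bigoplus_i R/(P)^{n_i}$ and a free summand $R$ has $\mathrm{rk}_l R=(m+1)\deg P\ge m+1>m$, the rank constraint $\mathrm{rk}_l M=m$ forbids free summands, which is exactly what makes $M\in T_{R,0}$. The computational route differs, though. You compute $d_\mathfrak{m}$ summand-by-summand using the periodic minimal free resolution $\cdots\to R\xrightarrow{P^{m+1-n}}R\xrightarrow{P^{n}}R\to R/(P)^n\to 0$, concluding that $d_\mathfrak{m}(R/(P)^n)=0$ for $1\le n\le m$ and $d_\mathfrak{m}(R)=1$, so that $d_\mathfrak{m}(M)$ is the number of free summands. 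The paper instead fixes a single presentation $0\to K\to R^n\to M\to 0$, uses $\pi^m M=0$ (itself a consequence of the absence of free summands) to deduce $K[\pi]=R^n[\pi]$, and then invokes the Artinian-local fact $|K\otimes_R\mathbb{F}|=|K[\pi]|$ to conclude $\dim_\mathbb{F}K\otimes_R\mathbb{F}=n$, hence $d_\mathfrak{m}(M)=0$. Your version makes the meaning of $d_\mathfrak{m}$ on cyclic modules explicit and avoids the socle/cotop duality step; the paper's version is slightly more compressed but leans on that duality. Both routes are valid and end in the same application of Theorem \ref{thm:The-measure-} and Lemma \ref{lem:cR formula} with $c_{R,0}=\eta(\mathbb{F})$.
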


\begin{proof}
Let $S_{R,m}=\left\{ M\in S_{R}\mid\dim_{\mathbb{F}_{l}}M=m\right\} $.
Applying the formula from Theorem \ref{thm:The-measure-} we have
\[
\mu_{R}\left(\left\{ M\in S_{R}\mid\mathrm{rk}_{l}M=m\right\} \right)=\sum_{j=0}^{m'/k}\sum_{M\in S_{R,m}\cap T_{R,j}}\frac{c_{R,j}}{\left|\mathrm{Aut}_{R}\left(M\right)\right|}.
\]
 Recall we denote by $T_{R}$ the support of $\mu_{R}$ which is a
disjoint union $T_{R}=\bigcup_{j=0}^{\infty}T_{R,j}$ (see (\ref{eq:partition TR})).

We claim that $S_{R,m}\subset T_{R,0}$. Let $\pi=P\left(X\right)$
so that $\left(\pi\right)$ is the unique maximal ideal of $R$. From
the proof of Lemma \ref{lem:characterize T_R} we see that if
\[
0\longrightarrow K\longrightarrow R^{n}\longrightarrow M\longrightarrow0
\]
 then $d_{m}=n-\dim_{\mathbb{F}}K\otimes_{R}\mathbb{F}.$ Hence we
must show that $\dim_{\mathbb{F}}K\otimes_{R}\mathbb{F}=n$.

Note $R\left[\pi\right]=\pi^{m}R$. Let $\phi:R^{n}\longrightarrow M$
be the map above, so $K=\ker\phi$. Since $\dim_{\mathbb{F}_{l}}M=m$
we have $\left|M\right|=l^{m}$. Every $R$-module is of the form
\[
\bigoplus_{i=1}^{n}R/\left(\pi\right)^{n_{i}}
\]
 which combined with the previous fact implies that $\pi^{m}M=0$.
Thus $R^{n}\left[\pi\right]\subseteq K\left[\pi\right]$ so we have
equality $R^{n}\left[\pi\right]=K\left[\pi\right]$.

Finally since $R$ and $K$ are finite we have $\left|K\otimes_{R}\mathbb{F}\right|=\left|K\left[\pi\right]\right|$
and $\left|R\otimes_{R}\mathbb{F}\right|=\left|R\left[\pi\right]\right|$.
We conclude that $\left|K\otimes_{R}\mathbb{F}\right|=\left|R^{n}\otimes_{R}\mathbb{F}\right|=\left|\mathbb{F}\right|^{n}$.
This proves the claim.

By Theorem \ref{thm:The-measure-} applied to $R$ we have that for
each $M\in S_{R,m}$ 
\[
\mu_{R}\left(M\right)=\frac{\eta\left(\mathbb{F}\right)}{\left|\mathrm{Aut}_{R}\left(M\right)\right|}.
\]
 This completes the proof.
\end{proof}
It is also possible to apply Corollary \ref{cor:explicit formula}
to the result of Lemma \ref{lem:rank prob polynomials} to get a more
explicit answer.

To demonstrate we compute some simple examples.
\begin{cor}
\label{cor:examples}For $l>2$ and any $a\in\mathbb{F}_{l}$ we have
\[
\mathrm{Prob}\left(\left(X-a\right)\mid\mid P_{C}\left(X\right)\right)\longrightarrow\eta\left(\mathbb{F}_{l}\right)\left[\frac{1}{\left(l^{2}-1\right)\left(l^{2}-l\right)}+\frac{1}{l^{2}-l}\right]
\]
 and 
\[
\mathrm{Prob}\left(\left(X-a\right)\nmid P_{C}\left(X\right)\right)\longrightarrow\eta\left(\mathbb{F}_{l}\right)
\]
 in the $g,q$-limit.
\end{cor}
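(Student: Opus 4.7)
The plan is to specialize Proposition~\ref{prop:char poly prob} to $s=1$ and $P_1(X) = X - a$ in the two cases $m_1 = 0$ and $m_1 = 1$, and then invoke Lemma~\ref{lem:rank prob polynomials} to rewrite the resulting abstract sum $\sum_M \mu_{R_1}(M)$ as an explicit finite sum of reciprocals of automorphism group orders weighted by $\eta(\mathbb{F}_l)$. The hypothesis $\prod_i \eta(\mathbb{F}_i) > 1/2$ of that proposition reduces here to $\eta(\mathbb{F}_l) > 1/2$, which holds precisely when $l \ge 3$; this is what forces the assumption $l > 2$.

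For the non-divisibility statement I take $m_1 = 0$, so that $R_1 = \mathbb{F}_l[X]/(X-a) \cong \mathbb{F}_l$ and the sum collapses to the single contribution from $M = 0$. Since $|\mathrm{Aut}_{R_1}(0)| = 1$, the limit equals $\eta(\mathbb{F}_l)$. For the divisibility statement I work with $R_1 = \mathbb{F}_l[X]/((X-a)^2) \cong \mathbb{F}_l[T]/T^2$, where $T = X - a$. By the structure theorem for finitely generated modules over the quotient PID $\mathbb{F}_l[X]$, the modules contributing to the sum in Lemma~\ref{lem:rank prob polynomials} fall into exactly two isomorphism classes: the split module $M_1 = \mathbb{F}_l \oplus \mathbb{F}_l$ on which $T$ acts as zero, and the cyclic module $M_2 = R_1$ itself. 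For $M_1$ the $T$-action is trivial, so $R_1$-linear automorphisms coincide with $\mathbb{F}_l$-linear ones, giving $|\mathrm{Aut}_{R_1}(M_1)| = |\mathrm{GL}_2(\mathbb{F}_l)| = (l^2-1)(l^2-l)$. For $M_2 = R_1$, the standard identification of $R$-linear automorphisms of a free rank-one module with the unit group yields $|\mathrm{Aut}_{R_1}(M_2)| = |R_1^\times| = l^2 - l$. Summing these two contributions weighted by $\eta(\mathbb{F}_l)$ produces the stated formula.

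The main bookkeeping challenge is lining up the ring $R_1$, the rank condition on $M$ inherited from Proposition~\ref{prop:char poly prob}, and the resulting set of contributing isomorphism classes; once the structure theorem is in hand, the automorphism count is immediate and the proof amounts to a finite computation with no further analytic content.
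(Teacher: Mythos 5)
Your overall strategy---specialize Proposition~\ref{prop:char poly prob} to $s=1$, $P_1 = X-a$, and then evaluate the resulting sum via Lemma~\ref{lem:rank prob polynomials}---is the same as the paper's, and your treatment of the non-divisibility case ($m_1 = 0$, only $M=0$ contributes) is correct.

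For the divisibility case, however, there is a genuine gap. Taking $m_1 = 1$ as you do, Proposition~\ref{prop:char poly prob} prescribes a sum over $M \in S_{R_1}$ with $\dim_{\mathbb{F}_l} M = m_1 = 1$, and Lemma~\ref{lem:rank prob polynomials} (with its built-in convention $R = \mathbb{F}_l[X]/(P^{m+1})$, so $m=1$ here) accordingly sums over modules of $\mathbb{F}_l$-rank $1$. The only such $R_1$-module is $\mathbb{F}_l = R_1/(X-a)$, with $|\mathrm{Aut}_{R_1}(\mathbb{F}_l)| = l-1$. You instead enumerate the two modules of $\mathbb{F}_l$-dimension $2$; ``by the structure theorem'' does not explain why the rank condition jumps from $1$ to $2$, and the displayed expression is in fact $\mu_{\mathbb{F}_l[X]/(X-a)^3}\bigl(\{\dim_{\mathbb{F}_l} M = 2\}\bigr)$, i.e. the density corresponding to $(X-a)^2 \mid\mid P_C(X)$. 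Note also that applying Lemma~\ref{lem:rank prob polynomials} to rank-$2$ modules over $R_1 = \mathbb{F}_l[X]/(X-a)^2$ is outside the lemma's scope: $M_2 = R_1$ is then a free $R_1$-module, hence lies in $T_{R_1,1}$, so $\mu_{R_1}(M_2) = c_{R_1,1}/|R_1^{\times}|$ with $c_{R_1,1} = \eta(\mathbb{F}_l)\, l/(l-1) \ne \eta(\mathbb{F}_l)$; the lemma couples the rank cut-off $m$ to the exponent $m+1$ in the ring precisely to exclude such free summands. The paper's own proof of this corollary exhibits the same rank mismatch, so you have faithfully reproduced the source rather than misread it, but as written the argument does not close: either the rank condition must be justified as $2$ rather than $m_1 = 1$, or the target formula must be adjusted.
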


\begin{proof}
We apply Lemma \ref{lem:rank prob polynomials}. For the first part
we have $R=\mathbb{F}_{l}\left[X\right]/\left(X-a\right)^{2}$ and
$\mathbb{F}=\mathbb{F}_{l}$. The only two $R$-modules with $\mathbb{F}_{l}$-rank
$2$ are 
\[
R/\left(X-a\right)\oplus R/\left(X-a\right),\ R
\]
 and applying the formula from Corollary \ref{cor:explicit formula}
gives the desired result.

For the second part $R=\mathbb{F}_{l}\left[X\right]/\left(X-a\right)$,
$\mathbb{F}=\mathbb{F}_{l}$ and the only module in the sum is the
trivial one.
\end{proof}
\bibliographystyle{alpha}
\bibliography{modules}

$\ $

\textsc{\small{}Jack Klys}{\small\par}

\textsc{\small{}Department of Mathematics and Statistics, University
of Calgary, Canada}{\small\par}

{\small{}$\mathtt{jack.klys@ucalgary.ca}$}\\

\textsc{\small{}Jacob Tsimerman}{\small\par}

\textsc{\small{}Department of Mathematics, University of Toronto,
Canada}{\small\par}

{\small{}$\mathtt{jacobt@math.toronto.edu}$}{\small\par}
\end{document}